\newcommand{\mres}{\mathbin{\vrule height 1.6ex depth 0pt width
0.13ex\vrule height 0.13ex depth 0pt width 1.3ex}}
\newcommand{\closure}[2][3]{
  {}\mkern#1mu\overline{\mkern-#1mu#2}}
\newcommand{\ssubset}{\Subset}
\DeclareMathOperator\B{B}
\newcommand{\C}{\mathrm{C}}
\newcommand*{\coleq}{\mathrel{\vcenter{\baselineskip0.5ex\lineskiplimit0pt\hbox{\normalsize.}\hbox{\normalsize.}}}=}
\renewcommand{\d}{\mathrm{d}}
\newcommand{\dx}{\,\d x}
\renewcommand{\L}{\mathrm{L}}
\newcommand{\loc}{\mathrm{loc}}
\renewcommand{\P}{\mathrm{P}}
\newcommand{\R}{\mathds{R}}
\newcommand\sd{\,\triangle\,}
\newcommand{\cx}{\closure{x}}
\theoremstyle{plain}
\newtheorem{theo}{Theorem}[section]
\newtheorem{prop}[theo]{Proposition}
\newtheorem{lemma}[theo]{Lemma}
\newtheorem{definition}[theo]{Definition}
\theoremstyle{remark}
\newtheorem{rem}[theo]{Remark}
\newtheorem{con}[theo]{Construction}
\newtheorem*{notation}{Notation}
\numberwithin{equation}{section}
\title{The Optimal Hölder Exponent\\in Massari's Regularity Theorem}
\author{Thomas Schmidt\footnote{Fachbereich Mathematik, Universität Hamburg, Bundesstr.\@ 55, 20146 Hamburg, Germany. Email addresses: \href{mailto:thomas.schmidt.math@uni-hamburg.de}{thomas.schmidt.math@uni-hamburg.de}, \href{mailto:jule.schuett@uni-hamburg.de}{jule.schuett@uni-hamburg.de}.}\qquad\qquad Jule Helena Schütt${}^\ast$}
\date{March 24th, 2025}
\begin{document}

\maketitle

\begin{abstract}
\noindent We determine the optimal Hölder exponent in Massari's regularity theorem for sets with variational mean curvature in $\mathrm{L}^p$. In fact, we obtain regularity with improved exponents and at the same time provide sharp counterexamples.
\end{abstract}

\bigskip\bigskip

\textbf{Mathematics Subject Classification:} 49Q05, 35J93, 53A10.

\tableofcontents

\section{Introduction}

This paper is concerned with a fine regularity issue for (local) minimizers of
Massari's functional
\[
  \mathcal{F}_H^U(F)\coleq\P(F,U)-\int_{F\cap U}H\dx\,,
\]
where the dimension $n\ge2$, an open set $U\subseteq\R^n$, and a function
$H\in\L^1(U)$ are given and $\P(F,U)$ stands for the perimeter of measurable sets
$F\subseteq\R^n$ in $U$. The study of $\mathcal{F}_H^U$ has its seeds in De
Giorgi's classical theory \cite{DeGiorgi6061} for the case $H\equiv0$, where every (non-singular)
minimizer $E$ of the perimeter $\mathcal{F}_0^U=\P(\,\cdot\,,U)$ is bounded by a
minimal surface $U\cap\partial E$, and in analogy one expects that, whenever $E$
locally minimizes $\mathcal{F}_H^U$ with general $H$, then $U\cap\partial E$
should be a prescribed-mean-curvature hypersurface in $\R^n$ with mean curvature
given by $H$.

The cornerstone results of the theory have been extended from $H\equiv0$ to
general $H$ by Massari \cite{massari1974esistenza,massari1975frontiere}. Indeed, he obtained --- as
a main advantage of the variational approach --- a basic existence
theorem for minimizers of $\mathcal{F}_H^U$ with a generalized
Dirichlet boundary condition at $\partial U$ and moreover established
partial $\C^{1,\alpha}$ regularity of local minimizers of
$\mathcal{F}_H^U$ up to a closed singular set of Hausdorff dimension
at most $n{-}8$ in case of $H\in\L^p(U)$, $p>n$.
While well-known examples demonstrate that both the assumption $p>n$ and the
dimension bound $n{-}8$ in the regularity theorem are sharp, to our
knowledge the optimality of the Hölder exponent $\alpha$ has not yet been
addressed. Indeed, while the original paper \cite{massari1975frontiere} provides
the exponent $\alpha=\frac14(1{-}\frac np)$, from the regularity theory of
Tamanini \cite{TamaniniBoundariesofCaccioppolisets, tamaninni1984regularity} for almost-minimizers of the perimeter one can directly read off the better exponent $\alpha=\frac12(1{-}\frac np)$ (compare also the introduction of \cite{AmbrosioPaolini1999}). Interestingly, though
Tamanini's results are optimal for almost-minimizers, for minimizers
of $\mathcal{F}_H^U$ with $H\in\L^p(U)$, $p>n$, we here
bring up the better, sharp, and apparently new exponent
\[
  \alpha_\mathrm{opt}(n,p)\coleq\frac p{p+1}\bigg(1-\frac np\bigg)=\frac{p-n}{p+1}\,.
\]
In fact, we establish partial $\C^{1,\alpha}$ regularity for all
$\alpha<\alpha_\mathrm{opt}(n,p)$, and at the
same time, for all $n\ge2$ and $n<p<\infty$, disprove $\C^{1,\alpha}$ regularity
for any $\alpha>\alpha_\mathrm{opt}(n,p)$ by counterexamples.
This
confirms the conjecture of \cite{massari1994variational} that
$\lim_{p\to\infty}\alpha_\mathrm{opt}(n,p)=1$ should hold. Moreover, our results
are optimal on the scale of Hölder spaces except for resolving the case of
the limit exponent $\alpha=\alpha_\mathrm{opt}(n,p)$. We expect that $\C^{1,\alpha}$
regularity extends to this limit exponent, but we leave a proof by somewhat
different methods for future treatment elsewhere.

In order to prove our regularity result it suffices to work on the
regular set where a-priori $\C^{1,\alpha}$ regularity with
\emph{some} $\alpha>0$ is available and we can use an iterative strategy
to gradually improve on $\alpha$.
Indeed, in each step we exploit the $\C^{1,\alpha}$ regularity at hand in order to improve on power-type decay
estimates for the deviation from minimality and deduce $\C^{1,\alpha}$
regularity with some larger $\alpha$ from Tamanini's results. Since the
resulting sequence of exponents converges from below to
$\alpha_\mathrm{opt}(n,p)$, this leads to the conclusion. Actually, the details
of the reasoning turn out to be somewhat technical, and we refer the reader to
Section \ref{Sec: Improvement HE} for a careful implementation.

We find it interesting to point out that, whenever a first-variation
equation is at hand and expresses that the mean curvature is given by an
$\L^q$ function, $q>n{-}1$, \emph{on the hypersurface}
$U\cap\partial E$, then it follows from $\L^q$ theory for the
linearized equation and the Morrey-Sobolev embedding that $\C^1$
solutions are automatically $\C^{1,\alpha}$ with the optimal exponent
$\alpha=1{-}\frac{n-1}q$. However, this framework is different from
ours and does not apply when considering $H\in\L^p(U)$
and thus allowing discontinuity of $H$ along $U\cap\partial E$ since
$H$ has no canonical restriction to the hypersurface $U\cap\partial E$ and in
general the second term of $\mathcal{F}_H^U$ is non-differentiable at the
minimizer $E$. The latter point may be even more visible when parametrizing
hypersurface portions as graphs and thus passing to the non-parametric
functional
\[
  \int_\Omega\sqrt{1{+}|\nabla w|^2}\,\d y-\int_\Omega\int_{-r}^{w(y)}H(y,s)\,\d s\,\d y
\]
for functions $w\in\C^1(\overline\Omega)$ over bounded open
$\Omega\subset\R^{n-1}$ with $r>\|w\|_{\C(\Omega)}$. Here,
we can differentiate in the dependent variable $w$ at a minimizer $f$
in the sense of
$\frac{\d }{\d  t}\big|_{t=f(y)}\int_0^tH(y,s)\,\d s=H(y,f(y))$ only if
$H$ is continuous in $s$ at the surface point $(y,f(y))$. All in all,
this means that, for minimizers of $\mathcal{F}_H^U$ with
$H\in\L^p(U)$, we do not have a first-variation equation at our disposal,
and hence our framework is a non-differentiable one in the general tradition of
\cite{giaquinta1983differentiability}. This goes with the observation that
our exponent $\alpha_\mathrm{opt}(n,p)$ lies in between the exponent
$\frac12(1{-}\frac np)$ available by direct variational considerations
and a better exponent of type $1{-}\frac np$ to be expected in a
differentiable situation. Moreover, for $p\to\infty$, when $H$
``approaches continuity'' and $\mathcal{F}_H^U$ ``approaches
differentiability'', $\alpha_\mathrm{opt}(n,p)$ asymptotically
approaches the better exponent $1{-}\frac np$. We remark that, on a general
level, this behavior is reminiscent of the optimal exponent
$\beta_\mathrm{opt}=\frac\gamma{2-\gamma}$ in the $\C^{1,\beta}$
regularity theory
\cite{phillips1983minimization,giaquinta1984sharp,hamburger2007optimal,schmidt2009simple,yang2013optimal} for
minimizers of non-parametric functionals with
$\C^{0,\gamma}$-dependence on the dependent variable $w$. However, this theory does not directly yield our exponent $\alpha_\mathrm{opt}(n,p)$. Additionally, in Section
\ref{Sec: Dim 2 regularity}  we consistently complete the picture described with
a minor observation, possibly well known to experts. Indeed, we record that in
case $H\in\L^\infty(U)$ one cannot write down the first-variation
equation of $\mathcal{F}_H^U$ but at least can formulate a closely related differential
inequality. Specifically for $n=2$, from this inequality one can easily read
off even $\C^{1,1}$ regularity of minimizers of $\mathcal{F}_H^U$ with $H\in\L^\infty(U)$ while for $n\ge3$ the counterexample \cite[Remark 3.6]{massari1994variational} shows that
$\C^{1,\alpha}$ regularity for all $\alpha<1$ is best possible.

Finally, let us briefly discuss our sharp counterexamples, which crucially
depend on the construction of suitable functions $H\in\L^p(U)$ such that
(a cut-off of) the $\C^{1,\alpha}$-subgraph
\[
  E=\left\{(\cx,x_n)\in\R^n\,:\,|\closure x|^{1+\alpha}<x_n\right\}
\]
locally minimizes $\mathcal{F}_H^U$ for some $U\ssubset \R^n$. We actually give two
constructions. The first one covers $n=2$ only (and actually works with an
odd variant of the subgraph $E$), but explicitly determines the function
$H$ with $H\in\L^p(U)$ whenever $p>2$, $\alpha>\alpha_\mathrm{opt}(2,p)$, as the
divergence of a unit vector field which suitably extends the outward unit normal
of $E$. The main ingredient is a lemma, which has been around previously in closely
related versions, and asserts that $E$ indeed locally minimizes $\mathcal{F}_H^U$
in the described situation. The second construction works in arbitrary dimension
$n\ge2$ but draws on some more background from the theory of variational mean
curvatures and is not explicit to the same extent. Specifically, it relies on
Barozzi's formula \cite{Barozzi1994} for an $\L^1$ optimal variational mean
curvature $H$ and the minimizers-contain-balls
lemma of Tamanini \& Giacomelli \cite{Tamanini1989Approx} in order to estimate
$H$ and infer $H\in \L^p(U)$ whenever $p>n$, $\alpha>\alpha_\mathrm{opt}(n,p)$.

The results of this paper are partially contained in the second author's master thesis \cite{schuett2022master}, which has been supervised by the first author.

\section{Preliminaries}

\subsection{General notation and overall assumptions}

\textbf{Overall assumptions.} Throughout this paper, we consider a dimension $2\le n\in\mathds{N}$. Moreover, if not otherwise stated, $U\subseteq \R^n$ is an open subset of the $n$-dimensional Euclidean space.

\vspace{0.5em}

\textbf{Basic notation.} In the following, $\B_r(x)$ denotes the open ball in $\R^n$ with radius $r>0$ and center $x\in\R^n$. In case the dimension of the ball is not clear, we use $\B^k_r(x')$ for the open ball with radius $r$ and center $x'\in\R^k$ in $\R^k$ with $k\in\{1,\dots,n\}$. For $x\in\R^n$, the symbol $\cx$ denotes the $(n{-}1)$-dimensional vector $(x_1,\dots,x_{n-1})$. Moreover, $\C_r(x)$ is the open cylinder $\B_r^{n-1}(\cx)\times (x_n-r,x_n+r)$ with center $x\in\R^n$ and with height and radius $r>0$ in $\R^n$. For a constant in $(0,\infty)$ depending only on values $t_1,\dots,t_N\in \R$ with $N\in\mathds{N}$, we write $\mathrm{c}(t_1,\dots,t_N)$.

\vspace{0.5em}

\textbf{Measure-theoretic notation}. We follow standard notations and denote the $s$-dimensional Hausdorff measure with $s\in[0,\infty)$ and the $n$-dimensional Lebesgue measure on $\R^n$ by $\mathcal{H}^s$ and $\mathcal{L}^n$, respectively. The set $\mathcal{M}^n$ denotes the set of all Lebesgue-measurable subsets of $\R^n$. We abbreviate $|F|\coleq\mathcal{L}^n(F)$ for sets $F\in\mathcal{M}^n$. The notation $|\mu|$ denotes the variation measure of a Radon measure $\mu$. For $\alpha\in[0,1]$ and $F\in\mathcal{M}^n$, we denote the set of points of density $\alpha$ of $F$ by $F(\alpha)$.

\vspace{0.5em}

\textbf{Notation for functions.} For $\alpha\in(0,1]$, $N\in\mathds{N}$ and $f\in\C^{0,\alpha}(U;\R^N)$, we denote the Hölder constant of $f$ on $U$ by $C_f^\alpha$. For a set $G\subseteq\R^n$, $\mathds{1}_G$ denotes the characteristic function of $G$. In measure-theoretic contexts, we identify functions and sets which coincide $\mathcal{L}^n$-a.e. and call them representations of each other.

\subsection{The perimeter}
For a Lebesgue-measurable set $E\subseteq \R^n$, the \textbf{perimeter} of $E$ in $U$ is defined by 
\begin{align*}
    \P(E,U)\coleq \sup\left\{\int_E \mathrm{div}\varphi\dx: \varphi\in\C^1_{\mathrm{cpt}}(U;\R^n), \|\varphi\|_{\C(U)}\le 1\right\}\,.
\end{align*}
Abbreviated, we write $\P(E)$ instead of $\P(E,\R^n)$. The perimeter is in fact the total variation of the derivative of a characteristic function, more precisely, $\mathds{1}_E$ is a $\mathrm{BV}_{\loc}(U)$-function with finite derivative measure on $U$ if and only if the perimeter of $E$ in $U$ is locally finite and in this case, $|\mathrm{D}\mathds{1}_E|(U)=\P(E,U)$. Therefore, it makes sense to extend the perimeter notion by $\P(E,B)=|\mathrm{D}\mathds{1}_E|(B)$ for Borel sets $B\subseteq \R^n$ whenever there exists an open neighborhood of $B$ such that $E$ has locally finite perimeter in this set. Then the perimeter $P(E,\cdot)$ becomes a Radon measure, and it is lower semi continuous with respect to the $\L^1_\loc$-convergence in the first argument.
\\
A decisive advantage of the perimeter is the possibility to measure the area of a set independently of null set changes in a good sense. Indeed, according to the divergence theorem, each set $E\subseteq\R^n$ with Lipschitz boundary in $U$ is a set of locally finite perimeter in $U$ and $\P(E,U)=\mathcal{H}^{n-1}(\partial E\cap U)$.
\\
The following properties of perimeters are well-known and can be found in \cite[Proposition 3.38]{ambrosio2000}, for instance.
 
\begin{lemma}[Properties of the perimeter] \label{Lemma: Properties of Perimeters}
    If $E\subseteq\R^n$ is a set of locally finite perimeter in $U$, $F\in\mathcal{M}^n$ and $B,B'\subseteq U$ are Borel sets, then 
\begin{enumerate}[label=\rm\roman*)]
    \item $\P(E,B)\le \P(E,B')$ if $B\subseteq B'$ with equality if $E\ssubset B$, \label{Periemter monotony}
    \item if $|(E\sd  F)\cap U'|=0$ for some open set $U'\subseteq U$ with $B\subseteq U'$, then $F$ is a set of locally finite perimeter in $U'$ with $\P(E,B)=\P(F,B)$, in particular, $\P(E,B)=\P(\R^n\setminus E,B)$, \label{Perimeter equality}
    \item if $F$ is also a set of locally finite perimeter in $U$, then $\P(E\cap F,B)+\P(E\cup F,B)\le \P(F,B)+\P(E,B).$\label{Perimeter inequality}
\end{enumerate}
\end{lemma}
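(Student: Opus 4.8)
The plan is to read off all three assertions from the identification $\P(E,\cdot)=|\mathrm D\mathds 1_E|$ of the perimeter measure with the total-variation measure of the distributional gradient of $\mathds 1_E$, recorded just above, which turns everything into standard facts about $\mathrm{BV}$ functions; accordingly I will only indicate the mechanisms. For \ref{Periemter monotony}, monotonicity $\P(E,B)\le\P(E,B')$ for Borel $B\subseteq B'\subseteq U$ is merely monotonicity of the Radon measure $|\mathrm D\mathds 1_E|$, and if $E\ssubset B$ then $\mathds 1_E$ vanishes on the open set $\R^n\setminus\closure E$, which contains $B'\setminus B$ since $\closure E\subseteq B$; hence $|\mathrm D\mathds 1_E|(B'\setminus B)=0$, and additivity yields $\P(E,B')=\P(E,B)$. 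For \ref{Perimeter equality}, the hypothesis $|(E\sd F)\cap U'|=0$ means $\mathds 1_E=\mathds 1_F$ $\mathcal L^n$-a.e.\ in $U'$, so these functions have the same distributional gradient there; in particular $\mathds 1_F\in\mathrm{BV}_\loc(U')$ and $|\mathrm D\mathds 1_E|=|\mathrm D\mathds 1_F|$ on Borel subsets of $U'$, which is the assertion. The supplementary claim $\P(E,B)=\P(\R^n\setminus E,B)$ is of a different nature and follows from $\mathds 1_{\R^n\setminus E}=1-\mathds 1_E$, hence $\mathrm D\mathds 1_{\R^n\setminus E}=-\mathrm D\mathds 1_E$, hence equal variation measures.

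The real content is \ref{Perimeter inequality}. First I would reduce to open $B$: if the inequality holds for open subsets of $U$, then given Borel $B\subseteq U$ — where we may assume $\P(E,B)+\P(F,B)<\infty$, since otherwise there is nothing to prove — outer regularity of the Radon measure $\P(E,\cdot)+\P(F,\cdot)$ provides open sets $A_k\supseteq B$ with $\P(E,A_k)+\P(F,A_k)\to\P(E,B)+\P(F,B)$, and then $\P(E\cap F,B)+\P(E\cup F,B)\le\P(E\cap F,A_k)+\P(E\cup F,A_k)\le\P(E,A_k)+\P(F,A_k)$ closes the case. For open $B$, the plan is the classical mollification argument: with a standard mollifier set $u_\varepsilon\coleq\mathds 1_E*\rho_\varepsilon$ and $v_\varepsilon\coleq\mathds 1_F*\rho_\varepsilon$, which for small $\varepsilon$ are smooth on a fixed open $B'\ssubset B$ with $0\le u_\varepsilon,v_\varepsilon\le1$; since $u_\varepsilon\to\mathds 1_E$ and $v_\varepsilon\to\mathds 1_F$ in $\L^1_\loc(B)$, also $\min(u_\varepsilon,v_\varepsilon)\to\mathds 1_{E\cap F}$ and $\max(u_\varepsilon,v_\varepsilon)\to\mathds 1_{E\cup F}$ in $\L^1_\loc(B)$. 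Lower semicontinuity of the total variation gives $\P(E\cap F,B')\le\liminf_\varepsilon\int_{B'}|\nabla\min(u_\varepsilon,v_\varepsilon)|\dx$ and its analogue for $\max$, while the pointwise identity $|\nabla\min(u_\varepsilon,v_\varepsilon)|+|\nabla\max(u_\varepsilon,v_\varepsilon)|=|\nabla u_\varepsilon|+|\nabla v_\varepsilon|$ ($\mathcal L^n$-a.e.) combined with $\int_{B'}|\nabla u_\varepsilon|\dx=\int_{B'}|(\mathrm D\mathds 1_E)*\rho_\varepsilon|\dx\le\P(E,B)$ (and likewise for $v_\varepsilon$) shows $\int_{B'}\bigl(|\nabla\min(u_\varepsilon,v_\varepsilon)|+|\nabla\max(u_\varepsilon,v_\varepsilon)|\bigr)\dx\le\P(E,B)+\P(F,B)$ for all small $\varepsilon$. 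Combining these along a common subsequence gives $\P(E\cap F,B')+\P(E\cup F,B')\le\P(E,B)+\P(F,B)$ (applied to relatively compact $B$ this also shows $E\cap F$ and $E\cup F$ have locally finite perimeter in $U$), and letting $B'\uparrow B$ by inner regularity gives \ref{Perimeter inequality}.

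I expect the delicate points — none a genuine obstacle, all routine in $\mathrm{BV}$ theory (cf.\ \cite[Proposition 3.38]{ambrosio2000}) — to be, within \ref{Perimeter inequality}: the pointwise gradient identity, whose only nontrivial region is $\{u_\varepsilon=v_\varepsilon\}$, where $\nabla u_\varepsilon=\nabla v_\varepsilon$ and $\nabla\min(u_\varepsilon,v_\varepsilon)=\nabla\max(u_\varepsilon,v_\varepsilon)=\nabla u_\varepsilon$ all hold $\mathcal L^n$-a.e., so that the identity survives; the non-expansiveness of mollification on the total variation, $\int_{B'}|(\mathrm D\mathds 1_E)*\rho_\varepsilon|\dx\le|\mathrm D\mathds 1_E|\bigl(\{x:\operatorname{dist}(x,B')<\varepsilon\}\bigr)$, which is Jensen's inequality plus Fubini; and the $\liminf$ bookkeeping, since one cannot simply add the two separate lower-semicontinuity estimates but must pass to a subsequence along which $\int_{B'}|\nabla\min(u_\varepsilon,v_\varepsilon)|\dx$ and $\int_{B'}|\nabla\max(u_\varepsilon,v_\varepsilon)|\dx$ converge simultaneously.
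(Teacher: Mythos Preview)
The paper does not give its own proof of this lemma: it simply records the statement as well known and cites \cite[Proposition 3.38]{ambrosio2000}. Your sketch is correct and follows exactly the standard $\mathrm{BV}$ route underlying that reference, so there is nothing to compare beyond noting that you have supplied the details the paper chose to omit.
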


\subsection{Reduced boundaries} 

Let $E\subseteq\R^n$ be a Lebesgue-measurable set. We denote the reduced boundary of $E$, which is defined as in \cite[Definition 3.54]{ambrosio2000} and taken in the largest open set such that $E$ has locally finite perimeter in that set, by $\partial^\ast E$. By $\nu_E$, we denote the weak outward unit normal of $E$.
\\
If $x\in\partial E$ and $E$ is of class $\C^1$ near $x$, then the weak outward unit normal equals the strong one at $x$ and $x\in\partial^\ast E$. Especially, $\partial E = \partial^\ast E$ whenever $E$ has $\C^1$ boundary.
For general Lebesgue-measurable sets, De Giorgi's structure theorem says $\P(E,\cdot)=\mathcal{H}^{n-1}\mres \partial^\ast E$ on the largest open set where $E$ has locally finite perimeter. A conclusion from this statement is the generalized divergence theorem which states
\begin{align*}
 \int_E\mathrm{div}\,\Phi\dx = \int_{\partial^\ast\!E} \Phi\cdot \nu_E\,\d\mathcal{H}^{n-1}   
\end{align*}
for all $\Phi\in\mathrm{W}^{1,1}(U;\R^n)\cap \C_\mathrm{cpt}(U;\R^n)$ whenever $E$ has locally finite perimeter in $U$. Moreover, Federer's structure theorem allows to identify the reduced boundary with the measure-theoretic boundary and with the set of all points of density $\frac{1}{2}$.
\\
The reduced boundary of a set $E$ is the principal part of the boundary in the measure-theoretic sense since it is invariant under null set changes of $E$. In order to investigate the regularity, it is reasonable to choose the best possible representation $E^\ast$ of $E$ such that $\partial E^\ast\setminus \partial^\ast E$ is minimized in the sense that it holds $\partial E^\ast=\closure{\partial^\ast E}$ and for every other representation $E'$ of $E$, it holds $\partial E'\supseteq 
\closure{\partial^\ast E}$. This can be realized by defining $E^\ast$ as the measure theoretic interior $E(1)$ of $E$. Moreover, this representation satisfies $0<|E^\ast\cap\B_r(x)|<|\B_r(x)|$ for all $x\in\partial E^\ast$ and $r>0$. If not otherwise stated, we will always assume sets to have this representation.

\vspace{0.5em}

For more background on BV theory, see \cite{ambrosio2000}, \cite{giusti1984}, and \cite{maggi2012}.

\subsection{Variational mean curvatures}

Variational mean curvatures generalize the concept of mean curvatures for boundaries of arbitrary sets. In contrast to mean curvatures, the variational mean curvature is defined on a neighborhood of the surface one actually wants to describe. Initially, we consider sets with constant variational mean curvature and state a useful result before generalizing the concept. 
\\
Let $\lambda>0$ and $E\subseteq \R^n$ be a set of finite perimeter and finite volume. Consider the minimization problem 
\begin{equation}\label{Minimization problem constant VMC}
    \inf\left\{\mathcal{F}_\lambda (F): F\subseteq E,\, F\in\mathcal{M}^n\right\}\,, \tag{$\P_\lambda$}
\end{equation}
with 
\begin{equation*}
    \mathcal{F}_\lambda(F)\coleq \P(F)+\lambda|E\setminus F| \qquad \text{for } F\in\mathcal{M}^n\,,\, F\subseteq E.
\end{equation*}

The following result has been obtained in \cite[Lemma 2.4]{Tamanini1989Approx}.

\begin{lemma}[Minimizers contain balls]\label{lemma: Minim contain balls}
Let $E\subseteq\R^n$ be a set of finite perimeter and finite volume. If there exist $x\in E$, $r>0$ such that $\B_r(x)\subseteq E$, then
\begin{equation*}
    \mathcal{F}_\lambda(F\cup \B_r(x))\le \mathcal{F}_\lambda(F)
\end{equation*}
for all $\lambda\ge\frac{n}{r}$ and all $F\subseteq E$. If we have $\lambda>\frac{n}{r}$, then equality in the estimate above
\textup{(}as it occurs specifically for a minimizer $F$ of \eqref{Minimization problem constant VMC}\textup{)} implies  $\B_r(x)\subseteq F$.
\end{lemma}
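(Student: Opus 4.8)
The plan is to reduce the whole statement to the single perimeter estimate
\[
  \P\bigl(F\cup\B_r(x)\bigr)-\P(F)\le\tfrac nr\,\bigl|\B_r(x)\setminus F\bigr|
\]
for those $F\subseteq E$ with $\P(F)<\infty$ (if $\P(F)=\infty$, the asserted inequality $\mathcal F_\lambda(F\cup\B_r(x))\le\mathcal F_\lambda(F)$ is trivial). Indeed, since $\B_r(x)\subseteq E$, the disjoint sets $E\setminus(F\cup\B_r(x))$ and $\B_r(x)\setminus F$ together make up $E\setminus F$, so $|E\setminus(F\cup\B_r(x))|=|E\setminus F|-|\B_r(x)\setminus F|$, and hence
\[
  \mathcal F_\lambda\bigl(F\cup\B_r(x)\bigr)-\mathcal F_\lambda(F)=\P\bigl(F\cup\B_r(x)\bigr)-\P(F)-\lambda\,\bigl|\B_r(x)\setminus F\bigr|\,;
\]
by the perimeter estimate this is at most $(\frac nr-\lambda)\,|\B_r(x)\setminus F|\le0$ whenever $\lambda\ge\frac nr$, which is the first assertion.

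To prove the perimeter estimate, I would first invoke the submodularity of the perimeter (Lemma~\ref{Lemma: Properties of Perimeters}), which gives $\P(F\cup\B_r(x))+\P(F\cap\B_r(x))\le\P(F)+\P(\B_r(x))$; in particular $G\coleq F\cap\B_r(x)$ has finite perimeter and
\[
  \P\bigl(F\cup\B_r(x)\bigr)-\P(F)\le\P\bigl(\B_r(x)\bigr)-\P(G)\,.
\]
Since $\B_r(x)$ has smooth boundary, $\P(\B_r(x))=\mathcal H^{n-1}(\partial\B_r(x))=n\omega_nr^{n-1}=\frac nr\,|\B_r(x)|$ with $\omega_n\coleq|\B_1(0)|$, while $|\B_r(x)\setminus F|=|\B_r(x)|-|G|$. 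So the perimeter estimate follows once we have the \emph{isoperimetric inequality inside the ball}: $\P(G)\ge\frac nr\,|G|$ for every $G\subseteq\B_r(x)$ of finite perimeter.

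For the latter I would use the radial calibration $\Phi(y)\coleq(y-x)/\max\{|y-x|,r\}$, which is Lipschitz on $\R^n$ with $|\Phi|\le1$ everywhere and $\mathrm{div}\,\Phi\equiv\frac nr$ on $\B_r(x)$; after multiplying it by a smooth cut-off equal to $1$ on $\overline{\B_r(x)}$ it becomes admissible in the generalized divergence theorem, which applied to $G\subseteq\B_r(x)$ gives
\[
  \tfrac nr\,|G|=\int_G\mathrm{div}\,\Phi\dx=\int_{\partial^\ast G}\Phi\cdot\nu_G\,\d\mathcal H^{n-1}\le\mathcal H^{n-1}(\partial^\ast G)=\P(G)\,.
\]
(Alternatively this is immediate from the classical isoperimetric inequality $\P(G)\ge n\omega_n^{1/n}|G|^{(n-1)/n}$ together with $|G|\le|\B_r(x)|=\omega_nr^n$.) This completes the first assertion.

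For the equality statement, assume $\lambda>\frac nr$ and $\mathcal F_\lambda(F\cup\B_r(x))=\mathcal F_\lambda(F)$. Then the chain of inequalities above collapses and forces $(\frac nr-\lambda)\,|\B_r(x)\setminus F|=0$, hence $|\B_r(x)\setminus F|=0$; as $F$ is taken in the representation $F=F(1)$, every point of the open ball $\B_r(x)$ then has density $1$ in $F$ and so lies in $F$, i.e.\ $\B_r(x)\subseteq F$. Finally, a minimizer $F$ of \eqref{Minimization problem constant VMC} produces this equality because $F\cup\B_r(x)\subseteq E$ is itself admissible in \eqref{Minimization problem constant VMC}, so minimality gives $\mathcal F_\lambda(F\cup\B_r(x))\ge\mathcal F_\lambda(F)$ while the first part gives the reverse inequality. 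I do not anticipate a serious obstacle here: the isoperimetric inequality inside the ball is the one genuinely geometric ingredient but is classical, and everything else — the finiteness of $\P(F\cap\B_r(x))$, the admissibility of the cut-off field, and the density-point argument — amounts to routine bookkeeping.
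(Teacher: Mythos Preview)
Your proof is correct and follows essentially the same route as the paper's: both use submodularity (Lemma~\ref{Lemma: Properties of Perimeters}\,\ref{Perimeter inequality}) to reduce the claim to the inequality $\P(G)\ge\frac nr\,|G|$ for $G\subseteq\B_r(x)$, and the paper derives this from the isoperimetric inequality exactly as in your parenthetical alternative, with your radial-calibration argument being an extra (equivalent) justification. The equality case is handled identically.
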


For the convenience of the reader, we explicate the proof.

\begin{proof}
We first show that the following inequality holds true for all $G\subseteq \B_r(x)$:
\begin{equation}\label{EQ: minimizers conatin balls 2}
    \P( \B_r(x))\le \P(G)+\frac{n}{r}|\B_r(x)\setminus G|\,.
\end{equation}
In other words, $\B_r(x)$ is a minimizer of $\mathcal{F}^{\frac{n}{r}}\coleq\P(\,\cdot\,)-\frac{n}{r}|\cdot|$ among subsets of $\B_r(x)$. According to the isoperimetric inequality, it suffices to show that $\B_r(x)$ minimizes $\mathcal{F}^{\frac{n}{r}}$ among balls with radius in $[0,r]$ which is straightforward to verify. 
\\
Finally, for $\lambda\ge \frac{n}{r}$ and Lebesgue-measurable sets $F\subseteq E$ of finite perimeter, it follows
\begin{align*}
    \mathcal{F}_\lambda(F\cup \B_r(x)) - \mathcal{F}_\lambda(F) &=\P( F\cup \B_r(x))-\P(F)-\lambda|\B_r(x)\setminus F|\\
    &\le \P(\B_r(x))-\P(\B_r(x)\cap F)-\lambda|\B_r(x)\setminus F|\\
    &\le \left(\frac{n}{r}-\lambda\right)|\B_r(x)\setminus F|\le 0\,,
\end{align*}
where we used $\B_r(x)\subseteq E$, Lemma \ref{Lemma: Properties of Perimeters} \ref{Perimeter inequality} and \eqref{EQ: minimizers conatin balls 2} for $G=F\cap \B_r(x)$. Equality can only appear if $\B_r(x)\setminus F$ is a null set in the case $\lambda>\frac{n}{r}$. The choice of representation implies $\B_r(x)\subseteq F$.
\end{proof}

The functional $\mathcal{F}_\lambda$ is generalized by the functional $\mathcal{F}_H$, defined in \eqref{EQ: Massari functiional definiton}, for $H\in\L^1(\R^n)$. In \cite{massari1974esistenza}, Massari showed that the boundary of a suitable regular minimizer of $\mathcal{F}_H$ has mean curvature $H$ whenever $H$ is continuous. This will be explicated later in Section \ref{Sec: Dim 2 regularity} and motivates the definition of (local) variational mean curvatures given in \cite[Definition 1.1]{Barozzi1994} and \cite[p.\@ 197]{GMT1993boundaries}.

\begin{definition}[(Local) Variational mean curvatures]\label{Def: VMC}
\label{Def: Set of mean curvatures}
Let $E\subseteq\R^n$ be a set of finite perimeter and $H\in\L^1(\R^n)$. We call $H$ a \textup{(}global\textup{)} \textbf{variational mean curvature} of\/ $E$ if
\begin{align*}
    \mathcal{F}_H(E)\le \mathcal{F}_H(F)\qquad \text{for all } F\in\mathcal{M}^n\,,
\end{align*}
where
\begin{align}\label{EQ: Massari functiional definiton}
    \mathcal{F}_H(F)\coleq\P(F)-\int_F H(x) \dx\,.
\end{align}
For $p\in{[1,\infty]}$ we denote the set of all variational mean curvatures of\/ $E$ in $\L^p(E)$ by $\mathds{H}^p(E)$.
\\
Moreover, we say that a set $\tilde{E}\subseteq \R^n$ of finite perimeter in $U$ has \textbf{\textup{(}local\/\textup{)} variational mean curvature \boldmath{$\tilde{H}\in\L^1(U)$} in \boldmath{$U$}} if 
\begin{align*}
    \mathcal{F}_{\tilde{H}}^U(\tilde{E})\le \mathcal{F}_{\tilde{H}}^U(F)\qquad \text{for all }F\in\mathcal{M}^n \text{ satisfying } F\sd  E\ssubset U\,,
\end{align*}
where 
\begin{align*}
    \mathcal{F}_{\tilde{H}}^U (F)\coleq \P(F,U)-\int_{U\cap F}\tilde{H}\dx\,.
\end{align*}
We denote the set of all variational mean curvatures of\/ $\tilde{E}$ in $U$ which are contained in $\L^p(\tilde{E})$ by $\mathds{H}^p(\tilde{E},U)$.
\end{definition} 

\begin{rem}\label{Rem: local VMC}
Clearly, we have $\mathds{H}^1(E) \subseteq\mathds{H}^1(E,U)$, where we identify $H\in \mathds{H}^1(E)$ with $H|_U\in\mathds{H}^1(E,U)$. However, we warn the reader that $\mathds{H}^1(E) \subsetneq\mathds{H}^1(E,U)$ may happen even for $U=\R^n$, that is a local variational mean curvature in $\R^n$ is not necessarily a global variational mean curvature.
\end{rem}

Next we record an elementary lemma, which we have not found in the existing literature.

\begin{lemma}\label{Lemma: composed curvtaure}
Let $E\subseteq\mathbb{R}^n$ be a set of finite perimeter in $U$ and $H_1,H_2\in\mathbb{H}^1(E,U)$. Then the composed function $\widehat H\coleq H_1\mathds{1}_{E\cap U}+H_2\mathds{1}_{U\setminus E}$ is also in $\mathbb{H}^1(E,U)$. If $E$ is a set of finite perimeter with $E\Subset U$, $H_1\in\mathbb{H}^1(E,U)$, and $H_2\in \mathbb{H}^1(E)$, then $\Tilde{H}\coleq H_1\mathds{1}_{E}+H_2\mathds{1}_{\R^n \setminus E}$ is in $\mathbb{H}^1(E)$.
\end{lemma}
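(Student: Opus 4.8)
The key observation is that the inequality $\mathcal{F}_{\widehat H}^U(E)\le\mathcal{F}_{\widehat H}^U(F)$ for a competitor $F$ with $F\sd E\Subset U$ should follow by splitting the volume term over $E$ and its complement and invoking the two hypotheses $H_1,H_2\in\mathbb{H}^1(E,U)$ separately. Concretely, I would first note that for any competitor $F$ the defect $\mathcal{F}_{H_1}^U(F)-\mathcal{F}_{H_1}^U(E)\ge0$ and $\mathcal{F}_{H_2}^U(F)-\mathcal{F}_{H_2}^U(E)\ge0$; adding suitable multiples of these will not quite work since the perimeter term would get doubled. Instead the right move is to write the composed functional's defect directly. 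One has
\begin{align*}
  \mathcal{F}_{\widehat H}^U(F)-\mathcal{F}_{\widehat H}^U(E)
  &=\P(F,U)-\P(E,U)
   -\int_{U\cap F}\widehat H\dx+\int_{U\cap E}\widehat H\dx\,.
\end{align*}
Now expand $\int_{U\cap F}\widehat H\dx=\int_{U\cap F\cap E}H_1\dx+\int_{(U\cap F)\setminus E}H_2\dx$ and likewise $\int_{U\cap E}\widehat H\dx=\int_{U\cap E}H_1\dx$ (since the $H_2$ part vanishes on $E$). The plan is then to regroup so that the expression becomes a sum of two nonnegative quantities, each of which is the defect of a genuine competitor against $E$ for one of the two functionals $\mathcal{F}_{H_i}^U$.

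**The competitors.** I expect the correct pairing to be $F\cap E$ tested against $E$ in $\mathcal{F}_{H_1}^U$ and $F\cup E$ tested against $E$ in $\mathcal{F}_{H_2}^U$ — or the other way around — using the submodularity inequality from Lemma~\ref{Lemma: Properties of Perimeters}\ref{Perimeter inequality}, namely $\P(F\cap E,U)+\P(F\cup E,U)\le\P(F,U)+\P(E,U)$, to absorb the perimeter term. Writing $m_1\coleq\mathcal{F}_{H_1}^U(F\cap E)-\mathcal{F}_{H_1}^U(E)\ge0$ and $m_2\coleq\mathcal{F}_{H_2}^U(F\cup E)-\mathcal{F}_{H_2}^U(E)\ge0$ (both competitors satisfy the symmetric-difference-compact-in-$U$ condition because $F$ does), one computes $m_1+m_2$ explicitly: the perimeters combine via submodularity to give at most $\P(F,U)-\P(E,U)$, and the volume integrals, after using $\int_{U\cap(F\cap E)}H_1=\int_{U\cap F\cap E}H_1$ and $\int_{U\cap(F\cup E)}H_2=\int_{U\cap E}H_2+\int_{(U\cap F)\setminus E}H_2$, reassemble exactly into $-\int_{U\cap F}\widehat H\dx+\int_{U\cap E}\widehat H\dx$ plus the $H_i$-terms on $E$ which cancel because $\widehat H=H_1$ on $E$. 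Thus $\mathcal{F}_{\widehat H}^U(F)-\mathcal{F}_{\widehat H}^U(E)\ge m_1+m_2\ge0$, which is the claim. One must double-check signs so that submodularity points the right way; that bookkeeping is the main (and only real) obstacle, and it is purely mechanical.

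**The global statement.** For the second assertion, with $E\Subset U$, $H_1\in\mathbb{H}^1(E,U)$, $H_2\in\mathbb{H}^1(E)$, and $\widetilde H=H_1\mathds{1}_E+H_2\mathds{1}_{\R^n\setminus E}$, I would run the same splitting but now test against \emph{all} $F\in\mathcal{M}^n$. The point is that $F\cap E\Subset U$ automatically (since $E\Subset U$), so $F\cap E$ is an admissible competitor for the \emph{local} problem governed by $H_1$, giving $m_1\coleq\mathcal{F}_{H_1}^U(F\cap E)-\mathcal{F}_{H_1}^U(E)\ge0$; meanwhile $F\cup E$ is an arbitrary measurable set, hence an admissible competitor for the \emph{global} problem governed by $H_2$, giving $m_2\coleq\mathcal{F}_{H_2}(F\cup E)-\mathcal{F}_{H_2}(E)\ge0$. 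Since $\widetilde H=H_1$ on $E$ and $=H_2$ off $E$, and since $E\Subset U$ means the local perimeters $\P(\cdot,U)$ appearing in $m_1$ agree with the relevant portions of the global perimeter away from $\partial U$, the same algebraic reassembly yields $\mathcal{F}_{\widetilde H}(F)-\mathcal{F}_{\widetilde H}(E)\ge m_1+m_2\ge0$. The only extra care needed is to ensure that replacing $\P(\cdot,U)$ by $\P(\cdot,\R^n)$ in the $H_1$-defect is harmless, which holds because $E\Subset U$ forces all the relevant set differences to sit inside $U$, so no perimeter mass is lost outside $U$; this is where Lemma~\ref{Lemma: Properties of Perimeters}\ref{Periemter monotony} (equality when $E\Subset B$) gets used.
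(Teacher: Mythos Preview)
Your strategy is correct and coincides with the paper's: test $F\cap E$ against $E$ in $\mathcal{F}_{H_1}^U$, test $F\cup E$ against $E$ in $\mathcal{F}_{H_2}^U$ (respectively $\mathcal{F}_{H_2}$ for the global claim), and combine via the submodularity inequality $\P(F\cap E)+\P(F\cup E)\le\P(F)+\P(E)$ from Lemma~\ref{Lemma: Properties of Perimeters}\ref{Perimeter inequality}. The paper arranges the same computation as a single chain $\mathcal{F}^U_{\widehat H}(E)+\mathcal{F}^U_{H_2}(E)=\mathcal{F}^U_{H_1}(E)+\mathcal{F}^U_{H_2}(E)\le\mathcal{F}^U_{H_1}(F\cap E)+\mathcal{F}^U_{H_2}(F\cup E)\le\mathcal{F}^U_{\widehat H}(F)+\mathcal{F}^U_{H_2}(E)$ rather than via the two nonnegative defects $m_1,m_2$, but the content is identical.
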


\begin{proof}
First we assume $E$ to have finite perimeter in $U$ and $H_1,H_2\in\mathbb{H}^1(E,U)$. Evidently, we have $\widehat H\in\mathrm{L}^1(U)$. For a set $F\in\mathcal{M}^n$ of finite perimeter with $F \triangle E\Subset U$, we compute
\begin{multline*}
   \qquad  \mathcal{F}^U_{\widehat H}(E) + \mathcal{F}^U_{H_2}(E) \\
\begin{aligned}
        =\;&\mathcal{F}^U_{H_1}(E) + \mathcal{F}^U_{H_2}(E)\\
    \le\; &\mathcal{F}^U_{H_1}(F\cap E) + \mathcal{F}^U_{H_2}(F\cup E)\\
     = \;&\P(E\cap F,U)+ \P(E\cup F,U) -\int_{E\cap F\cap U}H_1\dx - \int_{(E\cup F)\cap U} H_2\dx\\
     \le \;&\P(E,U) + \P(F,U) - \int_{F\cap U}\widehat H\dx - \int_{E\cap U} H_2\dx\\
    = \;&\mathcal{F}^U_{\widehat H}(F)+\mathcal{F}^U_{H_2}(E),
\end{aligned}
\end{multline*}
where we used Lemma \ref{Lemma: Properties of Perimeters} \ref{Perimeter inequality}.
Subtracting $\mathcal{F}^U_{H_2}(E)$ on both sides shows the first claim. Now, assume $E$ to be a of finite perimeter with $E\Subset U$ and $H_2\in\mathbb{H}^1(E)$. Then $\mathcal{F}_{H_1}^U(G)=\mathcal{F}_{\tilde H}^U(G)= \mathcal{F}_{\tilde H}(G)$ holds true for all measurable sets $G\subseteq E$. Hence, $\mathcal{F}_{\tilde H}(E) + \mathcal{F}_{H_2}(E)\le \mathcal{F}_{\tilde H}(F)+\mathcal{F}_{H_2}(E)$ can be concluded as before for all $F\in\mathcal{M}^n$.
\end{proof}

Since one can always modify a given variational mean curvature of a set $E$ by increasing its values on $E$ and decreasing its values outside $E$ without leaving the set $\mathds{H}^1$ of curvatures of $E$, one cannot hope to extract too much information on $E$ from an arbitrary variational mean curvature. However, the definition of variational mean curvatures is underpinned by the fact that for each Lebesgue-measurable set $E$ one can construct a certain optimal variational mean curvature, which may indeed yield better information. We will now present the construction of this optimal curvature from \cite{Barozzi1994}. The idea is to define a 'small' variational mean curvature for each set of finite perimeter based on the approximation of the set by subsets with constant variational mean curvature, i.e., by minimizers of \eqref{Minimization problem constant VMC}.

\begin{con}[Construction of the optimal variational mean curvature]\label{Construction H_E}
Let $E\subseteq \R^n$ be a set of finite perimeter and $h_E$ an arbitrary function in $\L^1(E)$ such that $h_E>0$ a.e. on $E$. Furthermore, let $\mu\coleq\mu_E$ be the positive and finite measure $h_E\mathcal{L}^n\mres E$. For an arbitrary number $\lambda>0$, we define the functional
\begin{align*}
    \mathcal{F}^\mu_\lambda(F) \coleq \P(F) + \lambda \mu(E\setminus F), \quad F\subseteq E, \; F\in\mathcal{M}^n
\end{align*}
and consider the minimization problem
\begin{align}\label{Construction minimization problem}
    \inf\left\{\mathcal{F}^\mu_\lambda (F): F\subseteq E,\, F\in\mathcal{M}^n\right\}.\tag{$\mathrm{CP}_\lambda$}
\end{align}
Notice that $\mathcal{F}_\lambda^\mu = \mathcal{F}_\lambda$ if $E$ is a set of finite volume and $h_E$ is chosen as $\mathds{1}_E$ such that \eqref{Construction minimization problem} turns into \eqref{Minimization problem constant VMC}. The following properties are easy to verify.
\begin{enumerate}[label=$\bullet$]
    \item There exists a \textup{(}not necessarily unique\textup{)} set $E_\lambda\subseteq E$, $E_\lambda\in\mathcal{M}^n$ which attains the minimum of \eqref{Construction minimization problem}.
    \item $E_\lambda\subseteq E_\gamma$ for all $0 < \lambda < \gamma$.
    \item $E\setminus\left(\bigcup_{\lambda\in\mathds{Q}_{>0}} E_\lambda\right)$ is a null set.
\end{enumerate} 
Finally, for a fixed choice of $(E_\lambda)_{\lambda\in\mathds{Q}_{>0}}$, we define the function
\begin{align}\label{Barozzi-formula}
    H_E(x)\coleq \inf\left\{\lambda h_E(x) : x\in E_\lambda,\, \lambda\in\mathds{Q}_{>0}\right\}
\end{align}
for $x\in E$. It is left to define $H_E$ on $\R^n\setminus E$. With regard to the fact that $H\in\mathds{H}^1(E)$ implies $-H\in\mathds{H}^1(\R^n\setminus E)$, it is reasonable to set
\begin{align*}
    H_E(x)\coleq -H_{\R^n\setminus E}(x)
\end{align*}
for $x\in \R^n\setminus E$, where the previous steps were used for $\R^n\setminus E$ instead of $E$ to construct $H_{\R^n\setminus E}$ on $\R^n\setminus E$ with corresponding a.e. positive $h_{\R^n\setminus E}\in\L^1(\R^n\setminus E)$.
\end{con}

For our purposes it is relevant that this construction allows for estimating $H_E$ via \eqref{Barozzi-formula} and Lemma \ref{lemma: Minim contain balls} on balls contained in $E$. This will enable us to construct the counterexamples for which the Hölder exponent in Massari's regularity theorem depends in a fairly sharp way on the integrability of the (optimal) curvature.

We also record the announced optimality property of $H_E$, which has been established in \cite[Theorem 2.1, Remark 2.1]{Barozzi1994}.
  
\begin{theo}[$H_E$ is an $\L^1$ optimal curvature]
  Let $E\subseteq \R^n$ be a set of finite perimeter. Then, for any choice of $h_E\in\L^1(E)$ and $(E_\lambda)_{\lambda\in\mathds{Q}_{>0}}$, the function $H_E$ from \eqref{Barozzi-formula} is a \textup{(}global\textup{)} variational mean curvature of\/ $E$ with $\P(E)=\|H_E\|_{\L^1(E)}\le\|H\|_{\L^1(E)}$ and $\P(E) = \|H_E\|_{\L^1(\R^n\setminus E)}\le \|H\|_{\L^1(\R^n\setminus E)}$ for all $H\in\mathds{H}^1(E)$.
\end{theo}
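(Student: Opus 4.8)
The plan is to establish, in this order: (1)~the identity $\P(E)=\int_E H_E\dx$, which together with its analogue for $\R^n\setminus E$ gives both norm equalities; (2)~the minimality $\mathcal{F}_{H_E}(E)\le\mathcal{F}_{H_E}(F)$; and (3)~the two optimality inequalities. Write $\mu=\mu_E$, $h=h_E$, and $\tau(x)\coleq\inf\{\lambda\in\mathds{Q}_{>0}:x\in E_\lambda\}$ for $x\in E$, so that $H_E=\tau h$ on $E$ by \eqref{Barozzi-formula}; note $H_E\ge0$ a.e.\ on $E$ (for a.e.\ $x$ the infimum in \eqref{Barozzi-formula} runs over a nonempty set, by the third bullet of Construction~\ref{Construction H_E}), and likewise $H_{\R^n\setminus E}\ge0$ a.e.\ on $\R^n\setminus E$. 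Since the ``left jump'' sets $E_\lambda\setminus\bigcup_{\mu\in\mathds{Q},\,\mu<\lambda}E_\mu$ are pairwise disjoint with total $\mu$-mass $\le\mu(E)<\infty$, the identity $\{\tau>\lambda\}=E\setminus E_\lambda$ holds up to an $\mathcal{L}^n$-null set for all but countably many $\lambda>0$; this bookkeeping will let us pass freely between sublevel sets of $\tau$ and the sets $E\setminus E_\lambda$ in the layer-cake identities below.

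For (1) I would analyze the value function $m(\lambda)\coleq\P(E_\lambda)+\lambda\mu(E\setminus E_\lambda)$, i.e.\ the minimum in \eqref{Construction minimization problem}, which is concave and nondecreasing as an infimum of the affine maps $s\mapsto\P(F)+s\,\mu(E\setminus F)$. Testing with $F=\emptyset$ gives $m(\lambda)\le\lambda\mu(E)$, hence $m(0^+)=0$; testing with $F=E$ gives $m(\lambda)\le\P(E)$, while $E_\lambda\to E$ in $\L^1_\loc$ as $\lambda\to\infty$ (the decreasing sets $E\setminus E_\lambda$ have null intersection) and lower semicontinuity of the perimeter force $\P(E)\le\liminf_\lambda\P(E_\lambda)\le m(\infty)$, so $m(\infty)=\P(E)$. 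For a minimizer $E_\lambda$ the line $s\mapsto\P(E_\lambda)+s\,\mu(E\setminus E_\lambda)$ supports $m$ at $\lambda$, so $\mu(E\setminus E_\lambda)\in\partial m(\lambda)$, and as $m$ is differentiable outside a countable set we get $m'(\lambda)=\mu(E\setminus E_\lambda)$ for a.e.\ $\lambda$; local absolute continuity of $m$ then yields $\P(E)=m(\infty)-m(0^+)=\int_0^\infty\mu(E\setminus E_\lambda)\,\d\lambda$. On the other hand, Cavalieri's principle for $\tau$ with respect to $\mu$, combined with the jump-set remark, gives $\int_E H_E\dx=\int_E\tau\,\d\mu=\int_0^\infty\mu(\{\tau>\lambda\})\,\d\lambda=\int_0^\infty\mu(E\setminus E_\lambda)\,\d\lambda$, which proves (1). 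Running the same argument for $\R^n\setminus E$ (with its own $h_{\R^n\setminus E}$) and using $\P(\R^n\setminus E)=\P(E)$ together with $H_E=-H_{\R^n\setminus E}\le0$ off $E$ gives $\|H_E\|_{\L^1(E)}=\|H_E\|_{\L^1(\R^n\setminus E)}=\P(E)$; in particular $\int_{\R^n}|H_E|<\infty$.

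By (1), $\mathcal{F}_{H_E}(E)=\P(E)-\int_E H_E\dx=0$, so for (2) it remains to show $\mathcal{F}_{H_E}(F)\ge0$ for every $F\in\mathcal{M}^n$ (we may assume $\P(F)<\infty$, the other case being trivial since $\int_F H_E\dx$ is finite). Submodularity of the perimeter (Lemma~\ref{Lemma: Properties of Perimeters}~\ref{Perimeter inequality}) and additivity of $F\mapsto\int_F H_E\dx$ give $\mathcal{F}_{H_E}(F\cap E)+\mathcal{F}_{H_E}(F\cup E)\le\mathcal{F}_{H_E}(F)+\mathcal{F}_{H_E}(E)=\mathcal{F}_{H_E}(F)$, so it suffices to treat $F\subseteq E$ and $F\supseteq E$; a short computation with $\P(\cdot)=\P(\R^n\setminus\cdot)$ and $H_E=-H_{\R^n\setminus E}$ off $E$ shows that the case $F\supseteq E$ amounts to the case ``$\subseteq$'' for the set $\R^n\setminus E$. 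Everything thus reduces to the \emph{key estimate}
\[
  \P(G)\ \ge\ \int_G H_E\dx\qquad\text{for every finite-perimeter }G\subseteq E.
\]
To prove it, apply Construction~\ref{Construction H_E} to $G$ with $h_G\coleq h|_G$, obtaining nested minimizers $(G_\lambda)$ and, by (1) for $G$, the identity $\P(G)=\int_0^\infty\mu(G\setminus G_\lambda)\,\d\lambda$; by the jump-set remark, $\int_G H_E\dx=\int_0^\infty\mu(G\setminus E_\lambda)\,\d\lambda$, so it is enough to show $\mu(G_\lambda\setminus E_\lambda)=0$ for a.e.\ $\lambda$. Adding the minimality inequality for $E_\lambda$ (tested against $E_\lambda\cup G_\lambda\subseteq E$), the minimality inequality for $G_\lambda$ (tested against $E_\lambda\cap G_\lambda\subseteq G$), and the submodularity inequality forces equality in all three; equality in the first exactly says that $E_\lambda\cup G_\lambda$ is again a minimizer of \eqref{Construction minimization problem} for $E$. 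At any differentiability point $\lambda$ of $m$, every minimizer $F$ of \eqref{Construction minimization problem} satisfies $\mu(E\setminus F)=m'(\lambda)$ (uniqueness of the supporting line), so $\mu(E\setminus(E_\lambda\cup G_\lambda))=\mu(E\setminus E_\lambda)$ and hence $\mu(G_\lambda\setminus E_\lambda)=0$; since $m$ is differentiable a.e., integrating $\mu(G\setminus E_\lambda)\le\mu(G\setminus G_\lambda)$ completes the key estimate, and thus (2). For (3): if $H\in\mathds{H}^1(E)$, testing $\mathcal{F}_H(E)\le\mathcal{F}_H(\emptyset)=0$ gives $\int_E H\dx\ge\P(E)$, hence $\|H\|_{\L^1(E)}\ge\int_E H\dx\ge\P(E)=\|H_E\|_{\L^1(E)}$; and since $H\in\mathds{H}^1(E)$ implies $-H\in\mathds{H}^1(\R^n\setminus E)$, the same argument for $\R^n\setminus E$ yields $\|H\|_{\L^1(\R^n\setminus E)}=\|{-H}\|_{\L^1(\R^n\setminus E)}\ge\P(\R^n\setminus E)=\|H_E\|_{\L^1(\R^n\setminus E)}$.

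The main obstacle is the key estimate $\P(G)\ge\int_G H_E\dx$ for $G\subseteq E$: the naive route of integrating the one-parameter bound $\lambda\mu(G\setminus E_\lambda)\le\P(G\cup E_\lambda)-\P(E_\lambda)$ loses too much near $\lambda=0$, so one is led to compare the given family $(E_\lambda)$ with the minimizers of the auxiliary problem for $G$, and the decisive input is the rigidity of the equality case in submodularity (plus a.e.\ differentiability of $m$), which is precisely what forces $G_\lambda\subseteq E_\lambda$ up to $\mu$-null sets. A secondary but pervasive nuisance is the careful treatment of the at-most-countably-many parameters at which the monotone families $(E_\lambda)$ and $(G_\lambda)$ jump, needed to justify the layer-cake identities.
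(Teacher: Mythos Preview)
The paper does not supply its own proof of this theorem: it merely records the statement and attributes the result to Barozzi \cite[Theorem~2.1, Remark~2.1]{Barozzi1994}. There is therefore no ``paper's proof'' against which to compare your attempt.

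That said, your argument stands on its own and is correct. The convex-analytic treatment of the value function $m(\lambda)$---identifying $m'(\lambda)=\mu(E\setminus E_\lambda)$ a.e.\ via the supporting-line interpretation and integrating to $\P(E)=\int_0^\infty\mu(E\setminus E_\lambda)\,\d\lambda$---is a clean route to the identity $\P(E)=\int_E H_E\dx$. The key estimate $\P(G)\ge\int_G H_E\dx$ for $G\subseteq E$ is the heart of the matter, and your proof via the submodularity equality case (forcing $E_\lambda\cup G_\lambda$ to be a minimizer, then using uniqueness of the slope at differentiability points of $m$ to get $\mu(G_\lambda\setminus E_\lambda)=0$) is correct and rather elegant. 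Two small points worth tightening in a final write-up: (i) Construction~\ref{Construction H_E} fixes $(E_\lambda)$ only for rational $\lambda$, whereas your differentiability and integration arguments implicitly range over all $\lambda>0$; this is harmless since minimizers exist for every $\lambda$ and the monotone extension is routine, but it deserves one sentence; (ii) the passage from local absolute continuity of the concave function $m$ on $(0,\infty)$ to $m(\infty)-m(0^{+})=\int_0^\infty m'\,\d\lambda$ uses that $m$ is bounded and nondecreasing, which you have but might state explicitly. The reduction of the case $F\supseteq E$ to the case ``$\subseteq$'' for $\R^n\setminus E$, and the derivation of the optimality inequalities in part~(3), are straightforward and correctly handled.
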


Moreover, if $E$ has finite volume and if $\mathds{H}^p(E)\neq\emptyset$ holds for $p\in {(1,\infty)}$, then \cite[Theorem 3.2]{Barozzi1994} asserts that $H_E$ is even the unique minimizer of the $\L^p(E)$-norm in $\mathds{H}^p(E)$.

\subsection{Regularity theorems}

Massari's regularity theorem was first obtained in \cite[Theorem 3.1, Theorem 3.2]{massari1975frontiere} and is now restated as follows; compare also \cite[Theorem 3.6]{massari1994variational}.

\begin{theo}[Massari's regularity theorem]\label{Theo: strong Regularity}
Let $p\in{(n,\infty]}$, $\alpha = \frac{1}{4}\big(1-\frac{n}{p}\big)$, $U\subseteq \R^n$ be an open set and $E\subseteq \R^n$ a set of finite perimeter in $U$.\footnote{For $p=\infty$, we identify $\frac{1}{4}\big(1-\frac{n}{p}\big)$ with $\frac{1}{4}$.} If 
there exists $H\in\L^p(U)$ such that $H$ is a local variational mean curvature of $E$ in $U$, that is, if $E$ minimizes the functional $\mathcal{F}_H^U(F)$
among all $F\in\mathcal{M}^n$ with $F\sd  E\ssubset U$, then the following properties are satisfied.
\begin{enumerate}[label=\rm\roman*)]
    \item $U\cap \partial^\ast E$ is an $(n-1)$-dimensional $\C^{1,\alpha}$-manifold relatively open in $U\cap \partial E$.\footnote{More precisely, for each $x\in U\cap \partial^\ast E$ there exists an open neighborhood $V\subseteq U$ of $x$ such that $V\cap \partial^\ast E=V\cap \partial E$ can be represented as a rotated and translated graph of a $\C^{1,\alpha}$-function and $E\cap V$ is the rotated and translated subgraph of this function.}
    \item For all $s\in(n-8,n]$, it holds $\mathcal{H}^s((\partial E\setminus \partial^\ast E)\cap U)=0$, where $\mathcal{H}^s\coleq\mathcal{H}^0$ for $s<0$.
\end{enumerate}
\end{theo}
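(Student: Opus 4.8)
The plan is to reduce both assertions to Tamanini's regularity theory \cite{TamaniniBoundariesofCaccioppolisets, tamaninni1984regularity} for almost-minimizers of the perimeter, after checking that a local minimizer of $\mathcal{F}_H^U$ with $H\in\L^p(U)$, $p>n$, is such an almost-minimizer in $U$. First I would fix a ball $\B_\rho(x)\subseteq U$ and an arbitrary competitor $F\in\mathcal{M}^n$ with $F\sd E\ssubset\B_\rho(x)$, where we may assume $\P(F,\B_\rho(x))<\infty$ since otherwise there is nothing to prove. Then also $F\sd E\ssubset U$, so minimality of $\mathcal{F}_H^U$ gives $\P(E,U)-\int_{E\cap U}H\dx\le\P(F,U)-\int_{F\cap U}H\dx$. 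Choosing $\rho'\in(0,\rho)$ with $F\sd E\ssubset\B_{\rho'}(x)$ and $\P(E,\partial\B_{\rho'}(x))=\P(F,\partial\B_{\rho'}(x))=0$ (possible for all but countably many $\rho'$), and using Lemma \ref{Lemma: Properties of Perimeters} \ref{Perimeter equality} together with additivity of the Radon measures $\P(E,\cdot)$, $\P(F,\cdot)$ to cancel the coinciding perimeter contributions outside $\B_{\rho'}(x)$, the above rearranges to
\[
  \P(E,\B_{\rho'}(x))\le\P(F,\B_{\rho'}(x))+\int_{\B_{\rho'}(x)}|H|\,\mathds{1}_{E\sd F}\dx\,.
\]
Hölder's inequality and $|\B_{\rho'}(x)|^{1-1/p}\le\mathrm{c}(n)\rho^{\,n(1-1/p)}$ bound the last integral by $\mathrm{c}(n)\|H\|_{\L^p(U)}\rho^{\,n(1-1/p)}$; since $\rho'<\rho$ and the perimeters of $E$ and $F$ also coincide on the annulus $\B_\rho(x)\setminus\overline{\B_{\rho'}(x)}$, the same manipulation upgrades this to
\[
  \P(E,\B_\rho(x))\le\P(F,\B_\rho(x))+\mathrm{c}(n)\|H\|_{\L^p(U)}\,\rho^{\,(n-1)+(1-n/p)}\,.
\]
As $1-\tfrac np>0$, this says precisely that $E$ is an almost-minimizer of the perimeter in $U$ in Tamanini's sense, with constant $\mathrm{c}(n)\|H\|_{\L^p(U)}$ and gauge exponent $1-\tfrac np$ (for $p=\infty$ one argues identically via $\|H\|_{\L^\infty(U)}|\B_{\rho'}(x)|$, obtaining gauge exponent $1$).

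Granting this, i) follows directly from \cite{TamaniniBoundariesofCaccioppolisets, tamaninni1984regularity}: almost-minimizers of the perimeter with gauge exponent $2\sigma$, $\sigma\coleq\tfrac12(1-\tfrac np)$, satisfy the density bounds $0<|E\cap\B_r(x)|<|\B_r(x)|$ at every $x\in U\cap\partial E$ (consistent with the normalized representative $E=E(1)$ fixed throughout), have $U\cap\partial^\ast E$ relatively open in $U\cap\partial E$, and near each point of $U\cap\partial^\ast E=U\cap\partial E$ the boundary is, up to a rotation and a translation, the graph of a $\C^{1,\sigma}$-function of which $E$ is the subgraph. Since $\alpha=\tfrac14(1-\tfrac np)<\sigma$, this in particular yields the claimed $\C^{1,\alpha}$ regularity (which is all Massari's original statement asserts; one may also quote \cite{massari1975frontiere} directly for the exponent $\tfrac14(1-\tfrac np)$).

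For ii) I would invoke Federer's dimension-reduction argument in the version available for almost-minimizers \cite{TamaniniBoundariesofCaccioppolisets, giusti1984, maggi2012}. Because the gauge exponent $1-\tfrac np$ is strictly positive, the rescalings $(E-x)/r$ of $E$ at a point $x\in U\cap\partial E$ subconverge, as $r\downarrow0$, in $\L^1_\loc(\R^n)$ to a perimeter-minimizing cone in $\R^n$, with local Hausdorff convergence of the topological boundaries and upper semicontinuity of the singular sets along the sequence; hence a point of $(\partial E\setminus\partial^\ast E)\cap U$ gives rise to a perimeter-minimizing cone that is singular at the origin. As perimeter-minimizing cones in $\R^k$ are half-spaces for $k\le7$, the reduction then forces $\mathcal{H}^s\big((\partial E\setminus\partial^\ast E)\cap U\big)=0$ for every $s>n-8$, which is ii).

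The only genuinely new step is the short computation of the first paragraph converting $\mathcal{F}_H^U$-minimality into almost-minimality of the perimeter; once it is in place, both conclusions are quotations from the by now standard theory of almost-minimal boundaries, so no serious obstacle remains. The delicate point --- obtaining the \emph{sharp} exponent rather than $\tfrac14(1-\tfrac np)$ or Tamanini's $\tfrac12(1-\tfrac np)$ --- is precisely what the rest of the paper is devoted to.
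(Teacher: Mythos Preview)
Your argument is correct, but there is no proof in the paper to compare it against: Theorem~\ref{Theo: strong Regularity} is quoted from \cite{massari1975frontiere} (see also \cite{massari1994variational}) as a known preliminary result, and the paper supplies no proof of its own. What the paper does do is exactly what you do in your first paragraph, though phrased more tersely: the Remark after Theorem~\ref{Theo: Tamanini Characterization} records the estimate $\Psi(E,\B_r(x))\le\mathrm{c}(p)\|H\|_{\L^p(U)}r^{n(p-1)/p}$ via H\"older's inequality and observes that Tamanini's theory then yields $\C^{1,\sigma}$ regularity with $\sigma=\tfrac12(1-\tfrac np)$, already better than Massari's original $\alpha=\tfrac14(1-\tfrac np)$. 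So your reduction to almost-minimality and appeal to \cite{TamaniniBoundariesofCaccioppolisets,tamaninni1984regularity} is precisely the route the paper has in mind for recovering (and slightly improving) the stated theorem, and your treatment of~ii) via Federer dimension reduction for almost-minimizers is the standard one. Nothing is missing; you have simply written out what the paper leaves as a citation and a one-line remark.
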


\begin{rem}\label{Rem: regularity theorem of Massari}
If $U\subseteq \R^n$ is an open and bounded set and both, $U$ and $E\cap U$ are sets of finite perimeter, then $\mathds{H}^1(E\cap U)\subseteq \mathds{H}^1(E,U)$. Hence, if $H_{E\cap U}$ and $H_{U\setminus E}$ have finite $\L^p$-norm on $E\cap U$ and $U\setminus E$, respectively, for some $p\in{[1,\infty)}$, then $H_{E\cap U} \mathds{1}_{E\cap U}-H_{U\setminus E}\mathds{1}_{U\setminus E}\in\mathds{H}^1(E,U)\cap \L^p(U)$ can be verified with Lemma \ref{Lemma: composed curvtaure}.
\end{rem}

In \cite[Theorem A]{bombieri1969minimal} it is proved that the Simons
cone
\begin{align*}
    C= \left\{x\in\R^8: x_1^2+x_2^2+x_3^2+x_4^2<x_5^2+x_6^2+x_7^2+x_8^2\right\}
\end{align*}
is a set with vanishing variational mean curvature in every bounded subset of $\R^8$. Since $\partial C$ is $\C^1$ except for the origin, the condition on $s$ in Massari's regularity theorem is optimal. The requirement $p>n$ is optimal as well. In fact, in \cite[Example 2.2]{massari1994variational} and \cite[Section 2]{GMT1993boundaries}, it is shown that the theorem fails for $p<n$ and $p=n$, respectively.

In order to improve on the Hölder exponent $\alpha$ in Theorem \ref{Theo: strong Regularity} we will crucially rely on the related regularity result \cite[Proposition 1]{TamaniniBoundariesofCaccioppolisets} of Tamanini for almost-minimizers of the perimeter. This result, which in itself comes with an optimal Hölder exponent, is restated next (together with some relevant notation).

\begin{notation}
For a set $E\subseteq\R^n$ of locally finite perimeter and a bounded open set $U\subseteq\R^n$, we set
\begin{align*}
    \Xi(E,U) &\coleq \inf\left\{\P(F,U): F\in\mathcal{M}^n\,,\,F\sd  E\ssubset U\right\},\\
    \Psi(E,U)&\coleq \P(E,U)-\Xi(E,U).
\end{align*}
\end{notation}

\begin{rem}\label{Rem: Minimizer estimate pertubation}
    Clearly, $\Psi$ is monotonously increasing in the second component with respect to the subset relation. Moreover, for all sets $E\subseteq \R^n$ of locally finite perimeter and bounded open sets $U\subseteq \R^n$, there exists a set $A\subseteq\R^n$ satisfying $A\setminus U = E\setminus U$ which minimizes the perimeter in $U$ with boundary datum $E$. This follows by the direct method of calculus of variations; see, for instance, \cite[Theorem 1.20]{giusti1984}. In particular, for all $F\in\mathcal{M}^n$ with $F\sd  E\ssubset U$, it holds
    \begin{align*}
        \P(A,\closure{U}) \le \P(F,\closure{U}) =  \P(F,U) +\P(E, \partial U).
    \end{align*}
    Hence, $A$ satisfies $\P(A,\closure{U})\le \Xi(E,U)+\P(E, \partial U)$.
\end{rem}

\begin{theo}[Tamanini's regularity theorem; a-priori-$\mathrm{C}^1$ case]\label{Theo: Tamanini Characterization}
 Let $\alpha\in (0,1)$ and $E\subseteq \R^n$ be a set such that $\partial E\cap U$ is $\C^1$. Then $\partial E\cap U$ is locally of class $\C^{1,\alpha}$ if and only if for each $x\in \partial E \cap U$ there exists a neighborhood $V$ of $x$ and constants $C,R>0$ such that 
 \begin{align} \label{EQ: Tamamninis regularity theorem}
     \Psi(E,\B_r(y)) \le Cr^{n-1+2\alpha},
\end{align}
holds true for all $y\in \partial E\cap V$ and $0<r<R$.
\end{theo}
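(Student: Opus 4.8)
The statement is an "if and only if" characterization, so the plan is to prove the two implications separately, with the bulk of the work in the "if" direction (decay of $\Psi$ implies $\C^{1,\alpha}$).

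For the "only if" direction, assume $\partial E \cap U$ is locally $\C^{1,\alpha}$ near a point $x$. After a rotation we may write $E\cap V$ as the subgraph of a $\C^{1,\alpha}$ function $f$ over a ball in $\R^{n-1}$, with $\nabla f$ having Hölder seminorm $L$ on a slightly smaller ball. For $y\in\partial E\cap V$ and small $r$, the competitor obtained by replacing the graph of $f$ over $\B_r^{n-1}(\cy)$ with its tangent hyperplane at $y$ has perimeter contribution controlled by $\int_{\B_r^{n-1}} (\sqrt{1+|\nabla f|^2} - \sqrt{1+|\nabla f(\cy)|^2} - \ldots)$; using $|\nabla f(\cx') - \nabla f(\cy)|\le L|\cx'-\cy|^\alpha$ and the elementary inequality $\sqrt{1+|a|^2}-\sqrt{1+|b|^2} \le |a-b|\cdot\frac{|a|+|b|}{\sqrt{\ldots}} \lesssim |a-b|^2$ when $a,b$ are bounded, one gets that $\Psi(E,\B_r(y))$ is bounded by $\int_{\B_r^{n-1}} L^2|\cx'-\cy|^{2\alpha}\,\d\cx' \lesssim L^2 r^{n-1+2\alpha}$, which is the claimed estimate with $C=\mathrm{c}(n)L^2$. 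One must be mildly careful that $\Xi$ is defined via competitors on $\B_r(y)$ (a ball, not a cylinder), but for $r$ small the graph stays well inside the ball so the cylinder-type competitor is admissible.

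For the "if" direction — which I expect to be the main obstacle — the idea is that \eqref{EQ: Tamamninis regularity theorem} says $E$ is an almost-minimizer of the perimeter (an "$(\Lambda,\alpha)$-minimizer" in Tamanini's sense with gauge $\omega(r)=Cr^{2\alpha}$), and one invokes the known regularity theory. Concretely: fix $x\in\partial E\cap U$ where $\partial E$ is $\C^1$, so after rotating, $E\cap V$ is the subgraph of $f\in\C^1$ over $\B_\rho^{n-1}$. The first step is to upgrade the decay of $\Psi$ into a Campanato-type decay estimate for the excess $\mathbf{e}(y,r) \coleq r^{1-n}\int_{\partial^*E\cap\B_r(y)} \tfrac12|\nu_E - (\nu_E)_{y,r}|^2\,\d\mathcal{H}^{n-1}$. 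This is the heart of Tamanini's argument: one compares $E$ on $\B_r(y)$ with the half-space through $y$ with normal $(\nu_E)_{y,r}$ (the "excess-decay / tilt-excess" iteration), combining the almost-minimality hypothesis $\Psi \le Cr^{n-1+2\alpha}$ with the De Giorgi-type decay for genuine perimeter minimizers to obtain, for a suitable $\theta\in(0,1)$ and all small $r$,
\[
  \mathbf{e}(y,\theta r) \le C_0\theta\, \mathbf{e}(y,r) + \mathrm{c}(n)\,C\,\theta^{1-n}r^{2\alpha}.
\]
Iterating this recursion yields $\mathbf{e}(y,r)\le \mathrm{c}\,(r/R)^{2\alpha}$ for $r<R$, uniformly in $y\in\partial E\cap V'$ on a smaller neighborhood. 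Since $\partial E$ is already $\C^1$ here, there is no need for the $\varepsilon$-regularity step that flattens the boundary from scratch; one only needs the decay.

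The final step converts the excess decay into $\C^{1,\alpha}$ regularity of $f$. The standard chain is: the decay $\mathbf{e}(y,r)\lesssim r^{2\alpha}$ implies, via the height-bound/Lipschitz-approximation lemmas, that $\nu_E$ has vanishing mean oscillation of Campanato type $\L^{2,n-1+2\alpha}$ on $\partial E\cap V'$; by Campanato's embedding $\nu_E\in\C^{0,\alpha}$ on the surface, hence $\nabla f \in \C^{0,\alpha}(\B_{\rho'}^{n-1})$, i.e. $\partial E\cap U$ is locally $\C^{1,\alpha}$. In writing this up I would cite Tamanini's \cite{TamaniniBoundariesofCaccioppolisets, tamaninni1984regularity} for the excess-decay iteration rather than reproduce it, and spend the detailed work only on (a) checking that the hypothesis \eqref{EQ: Tamamninis regularity theorem} is exactly the almost-minimality gauge Tamanini requires, and (b) the a-priori-$\C^1$ simplification, which lets us localize and skip the flatness-improvement step. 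The main subtlety to flag is uniformity: the constants $C,R$ in \eqref{EQ: Tamamninis regularity theorem} are allowed to depend on $x$, but to run the iteration and get $\C^{1,\alpha}$ on a whole neighborhood one shrinks to $V'\Subset V$ where a single pair $(C,R)$ works — this follows by a standard compactness/covering argument since the hypothesis is an open condition along $\partial E\cap V$.
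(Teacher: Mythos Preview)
The paper does not prove this theorem at all: Theorem~\ref{Theo: Tamanini Characterization} is merely restated from \cite[Proposition~1]{TamaniniBoundariesofCaccioppolisets} and used as a black box, with no proof or proof sketch given. So there is nothing in the paper to compare your proposal against.

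That said, your outline is a reasonable account of how Tamanini's argument actually proceeds, and in particular the identification of the hypothesis \eqref{EQ: Tamamninis regularity theorem} with the almost-minimality gauge, the excess-decay iteration, and the Campanato-to-H\"older step are the right ingredients. Two small comments. First, in the ``only if'' direction your tangent-plane competitor is not admissible as written, since replacing a graph by an affine graph over a disk creates a vertical jump at the lateral boundary $\partial\B_r^{n-1}(\cy)$; you need either to patch the two graphs along an annulus or, more simply, compare with the half-space and estimate the perimeter gap directly (which is how Tamanini does it). Second, your remark about uniformity of $(C,R)$ is slightly off: the hypothesis already hands you, for each $x$, a neighborhood $V$ on which a \emph{single} pair $(C,R)$ works for all $y\in\partial E\cap V$, so no additional compactness or covering argument is needed to run the iteration locally.
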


\begin{rem}
  The estimate $\Psi(E,\B_r(x))\le \mathrm{c}(p) \|H\|_{\L^p(U)}r^{n\frac{p-1}{p}}$ holds true for all $H\in \mathds{H}^p(E,U)$ and $B_r(x)\subseteq U$ according to Hölder's inequality. Hence, it is immediate that the optimal Hölder exponent in Massari's regularity theorem is greater than or equal to $\frac{1}{2}\big(1-\frac{n}{p}\big)$.
\end{rem}

\section{\boldmath Optimal \texorpdfstring{$\C^{1,\alpha}$}{Calpha} regularity}\label{Sec: Improvement HE}

In this section, we improve on the Hölder exponent in Massari's regularity theorem.

\subsection{Preparatory lemmas}

First we deal with two technical lemmas, where the second one provides us with suitable local rotations
which transform to a situation with horizontal tangent space and do not change the $\C^{1,\alpha}$ Hölder constant too much.

\begin{lemma}[Hölder continuity transferred from and to the unit normal]\label{Lemma: Hölder continuity tansmits from and to normal}
Let $\alpha\in(0,1]$, $\Omega\subseteq\R^{n-1}$ be an open set and $f\in\C^1(\closure{\Omega})$ with $\|\nabla f\|_{\C(\Omega)}<\infty$. Let $F$ denote the graph mapping of $f$. Then
$f$ is in $\C^{1,\alpha}(\Omega)$ if and only if the unit normal of the graph of $f$, that is, the vector field 
\begin{align*}
    \nu: F(\Omega)\to\R^n;\; x\mapsto \frac{(-\nabla f(\cx),1)}{\sqrt{1+|\nabla f(\cx)|^2}}\,,
\end{align*}
is in $\C^{0,\alpha}(F(\Omega);\R^n)$. Moreover, in this case we have the inequalities
\begin{align*}
    C^\alpha_{\nabla f} \le \mathrm{c}(\|\nabla f\|_{\C(\Omega)},\alpha)C^\alpha_\nu \qquad\text{and}\qquad C^\alpha_\nu \le  C^\alpha_{\nabla f}\,.
\end{align*}
\end{lemma}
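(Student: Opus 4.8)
The plan is to reduce the equivalence to a pair of pointwise Hölder estimates relating the increments of $\nabla f$ and of $\nu$, exploiting that $\nu$ is an explicit smooth function of $\nabla f$. Write $g(\xi)\coleq\frac{(-\xi,1)}{\sqrt{1+|\xi|^2}}$ for $\xi\in\R^{n-1}$, so that $\nu(F(\cx))=g(\nabla f(\cx))$. The map $g$ is smooth on all of $\R^{n-1}$, with derivative bounded on bounded sets; conversely, one can solve for $\xi$ in terms of $g(\xi)$, namely $\xi=-\frac{g'}{g_n}$ where $g'$ denotes the first $n{-}1$ components and $g_n$ the last, and this inverse map is smooth near the image of $g$ (note $g_n=\frac1{\sqrt{1+|\xi|^2}}\ge\frac1{\sqrt{1+\|\nabla f\|_{\C(\Omega)}^2}}>0$ on $\Omega$, so we stay in a region where $g_n$ is bounded away from $0$).

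First I would establish the easy direction and the easy inequality $C^\alpha_\nu\le C^\alpha_{\nabla f}$. Here I claim $g$ is $1$-Lipschitz: a direct computation of $g'(\xi)$ (or the geometric observation that $g$ maps $\R^{n-1}$ onto the open upper unit hemisphere and is a radial-type contraction) shows $|g(\xi_1)-g(\xi_2)|\le|\xi_1-\xi_2|$ for all $\xi_1,\xi_2$. Then for $x=F(\cx)$, $y=F(\closure y)$ in $F(\Omega)$ one has $|x-y|\ge|\cx-\closure y|$ (the graph map is expansive in the horizontal variables), so
\begin{align*}
  |\nu(x)-\nu(y)| = |g(\nabla f(\cx))-g(\nabla f(\closure y))| \le |\nabla f(\cx)-\nabla f(\closure y)| \le C^\alpha_{\nabla f}\,|\cx-\closure y|^\alpha \le C^\alpha_{\nabla f}\,|x-y|^\alpha,
\end{align*}
which gives $\nu\in\C^{0,\alpha}$ with $C^\alpha_\nu\le C^\alpha_{\nabla f}$. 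In particular, if $f\in\C^{1,\alpha}(\Omega)$ then $\nu\in\C^{0,\alpha}(F(\Omega);\R^n)$.

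For the reverse direction, assume $\nu\in\C^{0,\alpha}(F(\Omega);\R^n)$. Using the inverse relation $\nabla f(\cx)=\Phi(\nu(F(\cx)))$ with $\Phi(w)\coleq-w'/w_n$ defined and smooth on the set $\{w:w_n\ge(1+\|\nabla f\|^2_{\C(\Omega)})^{-1/2}\}$, I get $|\nabla f(\cx)-\nabla f(\closure y)|\le L\,|\nu(F(\cx))-\nu(F(\closure y))|$ with $L=\mathrm{c}(\|\nabla f\|_{\C(\Omega)})$ the Lipschitz constant of $\Phi$ on that set. It remains to convert the Hölder estimate for $\nu$, which is in terms of the ambient distance $|x-y|$ on the graph, into one in terms of $|\cx-\closure y|$; this is the one genuinely quantitative point and where the factor $\mathrm{c}(\|\nabla f\|_{\C(\Omega)},\alpha)$ enters. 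For $x=F(\cx)$, $y=F(\closure y)$ we have $|x-y|^2=|\cx-\closure y|^2+|f(\cx)-f(\closure y)|^2$, and by the mean value theorem along the segment $|f(\cx)-f(\closure y)|\le\|\nabla f\|_{\C(\Omega)}|\cx-\closure y|$ provided the segment lies in $\Omega$; for a genuinely local statement one works in a convex neighborhood, or alternatively reduces to estimating the local Hölder seminorm, which suffices for ``$f\in\C^{1,\alpha}(\Omega)$'' read as local $\C^{1,\alpha}$. Hence $|x-y|\le\sqrt{1+\|\nabla f\|_{\C(\Omega)}^2}\,|\cx-\closure y|$ and
\begin{align*}
  |\nabla f(\cx)-\nabla f(\closure y)| \le L\,C^\alpha_\nu\,|x-y|^\alpha \le L\,(1+\|\nabla f\|_{\C(\Omega)}^2)^{\alpha/2}\,C^\alpha_\nu\,|\cx-\closure y|^\alpha,
\end{align*}
so $\nabla f\in\C^{0,\alpha}$ with $C^\alpha_{\nabla f}\le\mathrm{c}(\|\nabla f\|_{\C(\Omega)},\alpha)\,C^\alpha_\nu$, as claimed.

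The main obstacle is bookkeeping rather than conceptual: making sure the comparison between the intrinsic graph distance $|x-y|$ and the base distance $|\cx-\closure y|$ is carried out correctly (it requires either convexity of $\Omega$ or passing to local neighborhoods, and one should be careful that ``$f\in\C^{1,\alpha}(\Omega)$'' in the statement is the local notion so that no uniform convexity of $\Omega$ is needed), and verifying cleanly that $g$ is $1$-Lipschitz and that $\Phi$ is Lipschitz on the relevant slab with constant depending only on $\|\nabla f\|_{\C(\Omega)}$. Both are short explicit computations with $g'(\xi)=\frac1{\sqrt{1+|\xi|^2}}\bigl(-\mathrm{Id}_{n-1}+\frac{\xi\otimes\xi}{1+|\xi|^2}\,,\,-\frac{\xi}{1+|\xi|^2}\bigr)^{\!\top}$-type formulas, which I would not expand in full detail.
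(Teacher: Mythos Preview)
Your proposal is correct and follows essentially the same route as the paper: the forward direction uses that $g(\xi)=\frac{(-\xi,1)}{\sqrt{1+|\xi|^2}}$ is $1$-Lipschitz together with $|\cx-\closure y|\le|x-y|$, and the backward direction uses the inverse relation $\nabla f(\cx)=-\closure{\nu}(x)/\nu_n(x)$, its Lipschitz dependence on $\nu$ on the slab where $\nu_n$ is bounded below, and the comparison $|x-y|\le(1+\|\nabla f\|_{\C(\Omega)}^2)^{1/2}|\cx-\closure y|$. Your remark about needing convexity (or a local reading) for the mean value estimate $|f(\cx)-f(\closure y)|\le\|\nabla f\|_{\C(\Omega)}|\cx-\closure y|$ is a valid subtlety that the paper's proof does not flag explicitly; in the paper's applications the domain is always a ball, so the issue is moot there.
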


\begin{proof}
  We start with the forward implication. Since $V:\R^{n-1}\to\R^n;\;x'\mapsto \frac{(-x',1)}{\sqrt{1+|x'|^2}}$ is Lipschitz continuous with Lipschitz constant $1$, it follows
\begin{align*}
    |\nu(x)-\nu(y)| = |V(\nabla f(\cx))-V(\nabla f(\closure{y}))| \le |\nabla f(\cx)-\nabla f(\closure{y})| \le C^\alpha_{\nabla f} |x-y|^\alpha
\end{align*}
for all $x,y\in F(\Omega)$.
\smallskip

Now we turn to the backward implication. Let $\cx,\closure{y}\in \Omega$ and $x\coleq(\cx,f(\cx))$, $y\coleq(\closure{y},f(\closure{y}))\in F(\Omega)$. We first notice $\nabla f(\closure{z})=-\frac{\closure{\nu}(z)}{\nu_n(z)}$, $|\closure{\nu}(z)|\le 1$ and that $\frac{1}{|\nu_n(z)|}=\sqrt{1+|\nabla f(\closure{z})|^2}$ is bounded from above by $\sqrt{1+\|\nabla f\|_{\C(\Omega)}^2}$ for each $z\in F(\Omega)$. Thus, by estimating
\begin{align*}
    |\nabla f(\cx)-\nabla f(\closure{y})| &= \left|\frac{\closure{\nu}(x)}{\nu_n(x)}-\frac{\closure{\nu}(y)}{\nu_n(y)}\right|\\
    &\le \frac{1}{|\nu_n(x)|}\left|\closure{\nu}(x)-\closure{\nu}(y)\right|+\frac{|\closure{\nu}(y)|}{|\nu_n(x)\nu_n(y)|}\left|\nu_n(y)-\nu_n(x)\right|\\
    &\le 2(1+\|\nabla f\|^2_{\C(\Omega)}) \left|\nu(x)-\nu(y)\right|\\
    &\le 2(1+\|\nabla f\|^2_{\C(\Omega)})C^\alpha_\nu|x-y|^\alpha\\
    &= 2(1+\|\nabla f\|^2_{\C(\Omega)})C^\alpha_\nu\left(|\cx-\closure{y}|^2+|f(\cx)-f(\closure{y})|^2\right)^{\frac{\alpha}{2}}\\
    &\le 2(1+\|\nabla f\|^2_{\C(\Omega)})^{1+\frac{\alpha}{2}}C^\alpha_\nu|\cx-\closure{y}|^\alpha,
\end{align*}
we arrive at the claim.
\end{proof}

\begin{lemma}[Existence of good graph representations of $\C^{1,\alpha}$-sets]\label{Lemma: Existence of good graph representation of Hölder cont functions}
Let $\alpha\in(0,1]$, $f\in \C^{1,\alpha}(\Omega)$ with $\|\nabla f\|_{\C(\Omega)}<\infty$ for an open set $\Omega\subseteq \R^{n-1}$, and let $F$ denote the graph mapping of $f$. 
\\
For all $x_0\in F(\Omega)$, there exists a constant $R>0$ such that for all $x\in \C_R(x_0)\cap F(\Omega)$, there exists a rotation $T$ with $Tx=x$ such that $\C_R(x)\cap TF(\Omega)$ is the graph of a $\C^{1,\alpha}(\B_R^{n-1}(\cx))$-function with vanishing gradient at $\cx$ and Hölder constant in $\big[0, \mathrm{c}(\alpha) C^\alpha_{\nabla f}\big]$.
\end{lemma}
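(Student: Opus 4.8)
The plan is to reduce the statement to a quantitative version of the inverse/implicit function theorem combined with the stability of Hölder constants under rotation established essentially in Lemma \ref{Lemma: Hölder continuity tansmits from and to normal}. First I would fix $x_0\in F(\Omega)$ and observe that, since $f\in\C^{1,\alpha}(\Omega)$ with bounded gradient, the unit normal $\nu$ of the graph is in $\C^{0,\alpha}(F(\Omega);\R^n)$ with $C^\alpha_\nu\le C^\alpha_{\nabla f}$ by Lemma \ref{Lemma: Hölder continuity tansmits from and to normal}. Given $x\in F(\Omega)$ near $x_0$, choose $T=T_x$ to be the rotation fixing $x$ that maps $\nu(x)$ to the vertical unit vector $e_n$; one can arrange $T_x$ to depend continuously on $x$ (e.g. as the rotation in the plane spanned by $\nu(x)$ and $e_n$ through the angle between them), so in particular $\|T_x-\mathrm{id}\|$ is small when $x$ is close to $x_0$ provided the radius $R$ is chosen small enough, using continuity of $\nu$. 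Then $TF(\Omega)$ is again a $\C^1$ hypersurface whose tangent plane at $x$ is horizontal, so by the implicit function theorem it is locally a graph $x_n=g(\cx)$ with $\nabla g(\cx)=0$.

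Next I would make this graph representation quantitative and uniform in $x$. The key point is that the $\C^1$ norm of the transformed surface, i.e. $\sup|\nabla g|$ on a cylinder of fixed radius, can be bounded purely in terms of $\|\nabla f\|_{\C(\Omega)}$ and is small near $x$ because $\nabla g(\cx)=0$ and, by the Hölder bound on the normal, $|\nu(\cdot)-\nu(x)|$ stays small on a small ball. Concretely, the unit normal $\tilde\nu$ of $TF(\Omega)$ satisfies $\tilde\nu(Tz)=T\nu(z)$, hence $|\tilde\nu(w)-e_n|=|\tilde\nu(w)-\tilde\nu(Tx)|\le C^\alpha_\nu|w-Tx|^\alpha\le C^\alpha_\nu(2R)^\alpha$ for $w\in\C_R(Tx)\cap TF(\Omega)$, provided $2R\le\operatorname{dist}$ bounds are respected; this forces the vertical component of $\tilde\nu$ to stay bounded away from $0$ and the tangential component small, so that on a slightly smaller cylinder $TF(\Omega)$ really is a graph of some $g$ with $\|\nabla g\|_{\C}\le\mathrm{c}(\|\nabla f\|_{\C(\Omega)})\,C^\alpha_\nu R^\alpha$, which we may assume $\le\|\nabla f\|_{\C(\Omega)}$ after shrinking $R$ once more depending only on $\alpha$ and $\|\nabla f\|_{\C(\Omega)}$. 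Here one has to be a little careful to first fix $R$ small enough (depending on $x_0$, $f$, $\alpha$) so that the cylinders $\C_R(x)$ for $x\in\C_R(x_0)\cap F(\Omega)$ stay inside a region where $F(\Omega)$ is a graph, which is where the dependence of $R$ on $x_0$ (and on $f$) enters and is harmless since the lemma only claims existence of such an $R$.

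Finally I would transfer the Hölder bound on $\nabla g$. Having $\tilde\nu\in\C^{0,\alpha}$ with $C^\alpha_{\tilde\nu}=C^\alpha_\nu\le C^\alpha_{\nabla f}$ (rotations are isometries, so the Hölder constant of the normal is unchanged), apply the backward implication of Lemma \ref{Lemma: Hölder continuity tansmits from and to normal} to $g$ on $\B_R^{n-1}(\cx)$: this yields $C^\alpha_{\nabla g}\le\mathrm{c}(\|\nabla g\|_{\C},\alpha)\,C^\alpha_{\tilde\nu}\le\mathrm{c}(\alpha)\,C^\alpha_{\nabla f}$, where the first constant is controlled because we arranged $\|\nabla g\|_{\C}\le\|\nabla f\|_{\C(\Omega)}$, hence $\mathrm{c}(\|\nabla g\|_{\C},\alpha)\le\mathrm{c}(\|\nabla f\|_{\C(\Omega)},\alpha)$; absorbing $\|\nabla f\|_{\C(\Omega)}$ into the choice of $R$ (or simply tracking it, since the statement allows the constant $\mathrm{c}(\alpha)$ to additionally depend on $\|\nabla f\|_{\C(\Omega)}$ via $R$) gives the asserted bound $C^\alpha_{\nabla g}\in[0,\mathrm{c}(\alpha)C^\alpha_{\nabla f}]$, together with $\nabla g(\cx)=0$ and $Tx=x$. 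The main obstacle is the bookkeeping in the second step: making the $\C^1$ smallness of the rotated graph uniform over all base points $x$ in a fixed cylinder while keeping the final Hölder constant independent of $x$; once one sees that the normal's Hölder continuity propagates unchanged under rotation and controls both the $\C^1$ size (via smallness of $R$) and the $\C^{1,\alpha}$ seminorm (via Lemma \ref{Lemma: Hölder continuity tansmits from and to normal}), the rest is routine.
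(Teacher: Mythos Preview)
Your proposal is correct and follows essentially the same route as the paper's proof: transfer to the normal via Lemma~\ref{Lemma: Hölder continuity tansmits from and to normal}, rotate so that the normal at $x$ becomes $e_n$, use smallness of $R$ to keep $|\tilde\nu-e_n|$ (hence $|\nabla g|$) uniformly bounded, and then apply the backward direction of Lemma~\ref{Lemma: Hölder continuity tansmits from and to normal}. The only refinements worth noting are that you should bound $\|\nabla g\|_{\C}$ by an \emph{absolute} constant (the paper gets $\sqrt3$ from $|\tilde\nu-e_n|<\tfrac12$) rather than by $\|\nabla f\|_{\C(\Omega)}$, so that the final constant is genuinely $\mathrm{c}(\alpha)$ as stated, and that the paper adds a short closing estimate $|x_n-g(y')|\le\mathrm{c}(\alpha)C^\alpha_{\nabla f}r^{1+\alpha}$ to ensure the rotated graph is defined over all of $\B_R^{n-1}(\cx)$ and stays inside the cylinder $\C_R(x)$.
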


\begin{proof}
  Let $x_0\in F(\Omega)$. We can choose $R>0$ small enough such that $C^\alpha_{\nabla f}R^\alpha<\varepsilon$ for some $\varepsilon\in\left(0,1\right)$ and $\B^{n-1}_R(\cx)\ssubset \Omega$ for all $x\in \C_R(x_0)$. Then, according to the Hölder continuity of $\nabla f$,
\begin{align*}
    \sup_{y'\in \B^{n-1}_R(z')\cap\Omega}|\nabla f(z')-\nabla f(y')|\le C_{\nabla f}^\alpha R^\alpha<\varepsilon
\end{align*}
holds true for all $z'\in\Omega$. Moreover, since the unit normal $\nu_{F(\Omega)}$ is locally uniformly continuous on $F(\Omega)$, we can make $R$ small enough such that 
\begin{align}\label{EQ: Lemma rotation}
    |\nu_{F(\Omega)}(y)-\nu_{F(\Omega)}(z)|<\frac{1}{2}
\end{align}
for all $y\in \C_R(x_0)\cap F(\Omega)$, $z\in F(\Omega)$ with $|z-y|\le R$.
\\
In order to obtain a rotation which preserves the graph structure of $F(\Omega)$, we show for $x\in \C_R(x_0)\cap F(\Omega)$ that the orthogonal projection of $F(\Omega)\cap \B_R(x)$ on the tangent space $\mathrm{T}_xF(\Omega)$ is one-to-one. Indeed, let us consider $y,w\in  F(\Omega)\cap \B_R(x)$ such that $y-w$ is parallel to $\nu_{F(\Omega)}(x)$. According to the mean value theorem, there exists $z'\in \B^{n-1}_R(\cx)$ such that $f(\closure{y})-f(\closure{w})=\nabla f(z')\cdot (\closure{y}-\closure{w})$. Thus, the estimate
\begin{align*}
    |\closure{y}-\closure{w}| &\le |y-w| \\
    &= |\nu_{F(\Omega)}(x)\cdot(y-w)|\\
    &= \left|\frac{-\nabla f(\cx)}{\sqrt{1+|\nabla f(\cx)|^2}}\cdot (\closure{y}-\closure{w}) + \frac{f(\closure{y})-f(\closure{w})}{\sqrt{1+|\nabla f(\cx)|^2}}\right|\\
    &\le \frac{|\nabla f(\cx)-\nabla f(z')|}{\sqrt{1+|\nabla f(\cx)|^2}} |\closure{y}-\closure{w}|\\
    &\le \varepsilon |\closure{y}-\closure{w}|
\end{align*}
enforces $y=w$ according to the choice of $\varepsilon$. Hence, the projection is one-to-one and there exist a rotation $T$ with $Tx=x$ such that $\nu_{TF(\Omega)}(x)=e_n$ and $TF(\Omega)\cap \B_R(x)=\tilde{F}(\tilde{\Omega})$ for the graph mapping $\tilde{F}$ of a $\C^1(\tilde{\Omega})$-function $\tilde{f}$ and some open set $\tilde{\Omega}\subseteq \B^{n-1}_R(\cx)$ with $\cx\in \tilde{\Omega}$.
\\
    In the next step, we show that the rotation preserves Hölder continuity with control on the Hölder constant. According to Lemma \ref{Lemma: Hölder continuity tansmits from and to normal}, the outward unit normal $\nu_{F(\Omega)}$ is $\alpha$-Hölder continuous on $F(\Omega)$ with Hölder constant in $\big[0,C^\alpha_{\nabla f}\big]$ on $F(\Omega)$. Since rotations are isometric, $\nu_{TF(\Omega)}(y)=T\left[\nu_{F(\Omega)} (T^{-1}y)+x\right]-x$ is still $\alpha$-Hölder continuous on $ TF(\Omega)$ with the same Hölder constant. With \eqref{EQ: Lemma rotation} and again the isometry property, we can estimate
    \begin{align*}
    \left|1-\frac{1}{\sqrt{1+|\nabla \tilde{f}(\closure{y})|^2}}\right| &= \left|\left(\nu_{TF(\Omega)}(x)\right)_n-\left(\nu_{TF(\Omega)}(y)\right)_n\right|\\
    &\le|\nu_{TF(\Omega)}(x)-\nu_{TF(\Omega)}(y)|
    \le \frac{1}{2}
\end{align*}   
    for all $y\in \B_R(x)\cap TF(\Omega)= \tilde{F}(\tilde{\Omega})$. Hence, $|\nabla \tilde{f}|$ is bounded on $\tilde{\Omega}$ by $\sqrt{3}$. Applying Lemma \ref{Lemma: Hölder continuity tansmits from and to normal} once more, we infer that $\tilde{f}$ is a $\C^{1,\alpha}(\tilde{\Omega})$-function with Hölder constant in $\big[0, \mathrm{c}(\alpha) C^\alpha_{\nabla f}\big]$.
\\
    Finally, the estimate
\begin{equation*}
    |x_n-\tilde{f}(y')|=|\tilde{f}(\cx)-\tilde{f}(y')|
    \le \sup_{\tilde{\Omega}}|\nabla \tilde{f}| r
    = \sup_{\tilde{\Omega}}|\nabla \tilde{f}-\nabla \tilde{f}(\cx)| r
    \le \mathrm{c}(\alpha) C^\alpha_{\nabla f} r^{1+\alpha}
\end{equation*}
for all $y'\in \B_r^{n-1}(\cx)\cap \tilde{\Omega}$ allows to choose a small $\tilde{R}\in(0,R)$ independently of the choice of $x$ such that $\C_{\tilde{R}}(x)\ssubset \B_R(x)$ and $\tilde{f}(y')\in (x_n-\tilde{R},x_n+\tilde{R})$ for all $y'\in \B^{n-1}_{\tilde{R}}(\cx)\cap \tilde{\Omega}$. With the continuity of $f$, it follows $\tilde{\Omega}\supseteq \B^{n-1}_{\tilde{R}}(\cx)$. Thus, $TF(\Omega)\cap \C_{\tilde{R}}(x)$ is the graph of a $\C^{1,\alpha}$-Hölder continuous function on $\B^{n-1}_{\tilde{R}}(\cx)$ with Hölder constant in $\big[0, \mathrm{c}(\alpha) C^\alpha_{\nabla f}\big]$. \qedhere
\end{proof}

\subsection{Regularity up to the optimal exponent}

With Lemma \ref{Lemma: Existence of good graph representation of Hölder cont functions} at hand, we now state and prove the main result of this section.

\begin{theo}[Massari's regularity theorem with sharp Hölder exponent]\label{Theo: Improvement reg theo}
The statement of Theorem \ref{Theo: strong Regularity} holds true for all Hölder exponents $\alpha<\frac{p-n}{p+1}$.\footnote{For $p=\infty$, we identify $\frac{p-n}{p+1}$ with $1$.}
\end{theo}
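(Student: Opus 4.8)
The plan is to run a bootstrap argument on the regular set $U\cap\partial^\ast E$, starting from the a-priori $\C^{1,\alpha_0}$ regularity with $\alpha_0=\tfrac12(1-\tfrac np)$ guaranteed by Tamanini's theorem together with Hölder's inequality (as recorded in the final remark before Section~\ref{Sec: Improvement HE}), and then iteratively improving the exponent up to, but not including, $\alpha_{\mathrm{opt}}(n,p)=\tfrac{p-n}{p+1}$. Fix $x_0\in U\cap\partial^\ast E$. The key quantitative step is the following: if near $x_0$ the boundary is $\C^{1,\alpha}$ with controlled Hölder constant, then I want to improve the decay estimate for $\Psi(E,\B_r(y))$ from the crude bound $\mathrm{c}(p)\|H\|_{\L^p}r^{n(1-1/p)}$ to something of order $r^{n-1+2\beta}$ for a $\beta>\alpha$, so that Theorem~\ref{Theo: Tamanini Characterization} yields $\C^{1,\beta}$ regularity. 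The mechanism is that, having $\C^{1,\alpha}$ regularity, I can use Lemma~\ref{Lemma: Existence of good graph representation of Hölder cont functions} to write $\partial E$ near $y$ as a graph of a $\C^{1,\alpha}$ function $\tilde f$ with $\tilde f(\closure y)=\tilde y_n$, $\nabla\tilde f(\closure y)=0$, and Hölder constant $\le\mathrm{c}(\alpha)C^\alpha_{\nabla f}$; hence $E\cap\B_r(y)$ differs from the half-space $\{x_n<y_n\}$ only within a thin region of thickness $\mathrm{c}(\alpha)C^\alpha_{\nabla f}r^{1+\alpha}$ around the tangent plane. Comparing $E$ with the competitor obtained by replacing $E\cap\B_r(y)$ by that half-space (suitably modified near $\partial\B_r(y)$), one estimates $\Psi(E,\B_r(y))$ by the $\mathcal F_H^{\B_r(y)}$-deficit of this competitor, which consists of a perimeter-difference term of order $r^{n-1}\cdot(C^\alpha_{\nabla f}r^\alpha)^2=\mathrm{c}\,r^{n-1+2\alpha}$ (this is just Tamanini sharpness for the graph) \emph{plus} a volume term $\int_{\text{thin region}}|H|\dx$ controlled by Hölder's inequality as $\|H\|_{\L^p(\B_r(y))}\cdot|\text{thin region}|^{1-1/p}\le\mathrm{c}\,r^{n/p'}\cdot(r^{n-1}\cdot r^{1+\alpha})^{1-1/p}$. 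The volume term is the one that carries the improvement: its exponent is $\frac np\cdot\frac{1}{p}\cdot p' $... more precisely $\frac{n}{p'}+\frac{(n+\alpha)(p-1)}{p}=n-1+\bigl(1-\tfrac np\bigr)+\tfrac{(\alpha+1-n)}{p}+\dots$; matching it against $n-1+2\beta$ gives the recursion $\beta=\tfrac12\bigl[(1-\tfrac np)+\tfrac{1+\alpha}{p'}-\text{(lower order)}\bigr]$, whose fixed point, after optimizing the two competing error terms, is exactly $\alpha_{\mathrm{opt}}(n,p)$.

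Concretely I would proceed as follows. \textbf{Step 1 (setup of the iteration).} Define $\alpha_0=\tfrac12(1-\tfrac np)$ and, on a fixed neighborhood $V\Subset U$ of $x_0$, record that $\partial E\cap V$ is $\C^{1,\alpha_0}$ with some Hölder constant $C_0$; all subsequent estimates will be on shrinking neighborhoods $V_k$ with constants $C_k$ that I must keep track of but which stay finite at each finite stage. \textbf{Step 2 (one improvement step).} Given $\C^{1,\alpha_k}$ regularity near $x_0$ with constant $C_k$, fix $y\in\partial E\cap V_k$ and small $r$; apply Lemma~\ref{Lemma: Existence of good graph representation of Hölder cont functions} to get the good rotated graph representation; build the half-space competitor $F$ inside $\B_r(y)$ (being careful to interpolate in a shell of width $\sim r$ so that $F\sd E\Subset\B_r(y)$, at the cost of an admissible $\mathrm{c}\,r^{n-1}(C_k r^{\alpha_k})$ perimeter error on the shell — which is $\mathrm{c}\,r^{n-1+\alpha_k}$, too weak, so in fact one takes the shell width $\delta r$ and optimizes $\delta$, getting back an $r^{n-1+2\alpha_k}$-type term, a standard device); use minimality $\P(E,\B_r(y))-\int_{E\cap\B_r(y)}H\le\P(F,\B_r(y))-\int_{F\cap\B_r(y)}H$ to bound $\Psi(E,\B_r(y))\le\P(E,\B_r(y))-\Xi(E,\B_r(y))$ by the geometric term $+\int_{(E\sd F)\cap\B_r(y)}|H|\dx$, and estimate the latter by Hölder as above. \textbf{Step 3 (invoke Tamanini).} The resulting bound $\Psi(E,\B_r(y))\le C' r^{n-1+2\alpha_{k+1}}$ with $\alpha_{k+1}>\alpha_k$ defined by the recursion, valid for all $y\in\partial E\cap V_{k+1}$, $0<r<R_{k+1}$, feeds Theorem~\ref{Theo: Tamanini Characterization} to give $\C^{1,\alpha_{k+1}}$ regularity near $x_0$. \textbf{Step 4 (pass to the limit).} Analyze the recursion $\alpha_{k+1}=\Phi(\alpha_k)$; show $\Phi$ is increasing, that $\alpha_{\mathrm{opt}}(n,p)$ is its unique fixed point in $(0,1)$, and that $\alpha_0<\alpha_{\mathrm{opt}}$, so $\alpha_k\uparrow\alpha_{\mathrm{opt}}(n,p)$. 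Hence for every $\alpha<\alpha_{\mathrm{opt}}(n,p)$ some $\alpha_k>\alpha$, giving $\C^{1,\alpha}$ regularity of $U\cap\partial^\ast E$ near every $x_0$, i.e.\ conclusion (i); conclusion (ii) is unchanged from Theorem~\ref{Theo: strong Regularity}.

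The main obstacle is \textbf{bookkeeping the constants through the infinitely many steps}: each improvement step shrinks the neighborhood ($V_{k+1}\subset V_k$, $R_{k+1}<R_k$) and the new Hölder constant $C_{k+1}$ produced by Theorem~\ref{Theo: Tamanini Characterization} depends on $C_k$, on $\|H\|_{\L^p}$, and on $R_k$; one must verify that at the \emph{limit} exponent these quantities do not blow up in a way that destroys the argument — which is exactly why the theorem stops at $\alpha<\alpha_{\mathrm{opt}}$ rather than reaching it. In practice one fixes the target $\alpha<\alpha_{\mathrm{opt}}$ \emph{first}, runs only finitely many steps $k=0,1,\dots,N(\alpha)$ until $\alpha_N>\alpha$, and then all constants and radii stay uniformly controlled on $V_N$; the delicate point is choosing the shell-width optimization in Step 2 uniformly and making sure the "lower order" terms in the recursion genuinely are lower order so that the fixed point is precisely $\tfrac{p-n}{p+1}$ and not something smaller. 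A secondary technical nuisance is the interplay between the rotation from Lemma~\ref{Lemma: Existence of good graph representation of Hölder cont functions} (which fixes the point $x$ but moves the graph) and the cylinder-vs-ball geometry in Tamanini's criterion, requiring the usual comparison of $\C_r$ and $\B_r$ at comparable scales.
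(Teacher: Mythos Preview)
Your overall architecture --- bootstrap on the regular set, feed improved decay of $\Psi(E,\B_r(y))$ into Tamanini's criterion, iterate a contraction with fixed point $\tfrac{p-n}{p+1}$ --- is exactly the paper's. But your Step~2 has a genuine gap that would stall the iteration.

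You propose to compare $E$ with an \emph{explicitly constructed} half-space competitor $F$ (half-space in the interior, interpolated to $E$ in a shell). From minimality of $E$ you get $\P(E,\B_r(y))\le\P(F,\B_r(y))+\int_{(E\sd F)\cap\B_r(y)}|H|$, hence
\[
  \Psi(E,\B_r(y))=\P(E,\B_r(y))-\Xi(E,\B_r(y))\le\bigl[\P(F,\B_r(y))-\Xi(E,\B_r(y))\bigr]+\int_{(E\sd F)\cap\B_r(y)}|H|\,.
\]
The bracketed term is your ``geometric term''. You claim it is $\mathrm{c}\,r^{n-1+2\alpha_k}$, but even granting that bound, it \emph{kills the iteration}: combined with the volume term of order $r^{n-1+2g(\alpha_k)}$ (where $g(s)=\tfrac{p-1}{2p}s+\tfrac{p-n}{2p}$), the dominant exponent for small $r$ is $\min\{\alpha_k,g(\alpha_k)\}=\alpha_k$, and Tamanini only returns $\C^{1,\alpha_k}$. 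No improvement. The shell-width optimization you allude to cannot do better than $r^{n-1+2\alpha_k}$ for this term, because $F$ is genuinely not the perimeter minimizer for $\C^{1,\alpha_k}$ boundary data and its excess over $\Xi$ is exactly of that order.

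The paper avoids this by \emph{not} constructing a competitor at all. It takes $A$ to be the actual perimeter minimizer in $\C_r(y)$ with boundary datum $E$, so that $\Psi(E,\B_r(y))\le\P(E,\C_r(y))-\P(A,\C_r(y))$ and then, using minimality of $E$ for $\mathcal F_H$,
\[
  \P(E,\C_r(y))-\P(A,\C_r(y))\le\int_{(E\sd A)\cap\C_r(y)}|H|\,\d z\,.
\]
There is \emph{no} geometric error term. The price is that one must now show $E\sd A$ lies in the thin slab $\{|z_n|\le C^{\alpha_k}_{\nabla f}r^{1+\alpha_k}\}$; for $E$ this is immediate from the $\C^{1,\alpha_k}$ graph estimate, but for $A$ it is not obvious. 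The paper handles this with a cut-off argument: intersecting a set of finite perimeter with a half-space does not increase perimeter, so $A\cap\{z_n\le c(r)+\varepsilon\}$ (suitably matched to $E$ near $\partial\C_r(y)$) is again a perimeter minimizer, and after letting $\varepsilon\searrow0$ and repeating for the complement one may assume $A$ itself lies in the slab. With that in hand, H\"older's inequality on the slab gives the pure recursion $\alpha_{k+1}=g(\alpha_k)$, which converges to $\tfrac{p-n}{p+1}$. This is the missing idea in your proposal.
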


\begin{proof}
Let $E,U,p,H$ be as in Theorem \ref{Theo: strong Regularity}, and set $\alpha_0\coleq\frac{1}{4}\big(1-\frac{n}{p}\big)$. In particular, we can assume that $E$ is represented by $E(1)$ such that $\partial E = \closure{\partial^\ast E}$. 
In order to apply Theorem \ref{Theo: Tamanini Characterization}, our aim is to estimate 
\begin{align}\label{EQ: Goal for improv}
   \Psi(E,\B_r(y)) \le Cr^{n-1+2\alpha_1}
\end{align}
for suitable $y\in U$, local constants $C,R>0$, $r\in(0,R)$ and for $\alpha_0<\alpha_1\in(0,1)$.

\medskip

\emph{Step 1. General framework.}
For $x\in\partial^\ast E$, by Theorem \ref{Theo: strong Regularity} there exists an open neighborhood $V$ of $x$ in $U$ such that $\partial^\ast E\cap V=\partial E\cap V$ can be represented by a rotated graph of a $\C^{1,\alpha_0}$-function and the set $E\cap V$ is the rotated subgraph of this function.
Hence, the rotated subgraph is a set with variational mean curvature $H$ in $V$ and with $\C^{1,\alpha_0}$-boundary in $V$. Since we can formulate the following proof for $V$ instead of $U$, we can w.l.o.g. assume $\partial E=\partial^\ast E$ in $U$.

\medskip

\emph{Step 2. Reduction to horizontal tangent spaces and basic $\C^{1,\alpha_0}$ estimate.} Now consider a point $x\in\partial E\cap U$ on the surface. By rotation invariance of the perimeter and the variational mean curvature, we can assume that $f:\Omega\to\R$ is the $\C^{1,\alpha_0}$-function which represents $E$ near $x$ as a subgraph for some open neighborhood $\Omega\subseteq\R^{n-1}$ of $\cx$.
 According to Lemma \ref{Lemma: Existence of good graph representation of Hölder cont functions}, we can make $R>0$ small enough such that for all $y\in \partial E\cap \C_R(x)$, we can find a rotation $T$ with $Ty=y$ such that $TE\cap \C_R(y)$ is still the subgraph of a $\C^{1,\alpha_0}(\B_R^{n-1}(\closure{y}))$-function with vanishing gradient at $\closure{y}$ and with uniformly bounded Hölder constant of the gradient, i.e., it depends on the choice of $x$ but not on the choice of $y$. Since the perimeter is invariant under translation and rotation, we can assume w.l.o.g. $\nabla f(\closure{y})=0$ and $y_n=f(\closure{y})=0$ for fixed $y\in\partial E\cap \C_R(x)$.\\
Let now $r\in (0,R)$. We show 
\begin{align}\label{EQ: Cylinder estimate}
|z_n|\le C_{\nabla f}^{\alpha_0} r^{1+\alpha_0}\coleq c(r) \qquad \text{for all } z\in \left(E\sd  \R^n_-\right)\cap \C_r(y),
\end{align}
where $\R^n_-$ denotes the lower half-space $\R^{n-1}\times{(-\infty,0)}$. Indeed, since $E$ is the subgraph of $f$ in $\C_r(y)$, it holds $|z_n|\le |f(\closure{z})|$ for all $z\in (E\sd  \R^n_-)\cap \C_r(y)$. Since we assumed $f(\closure{y})=0$ and $\nabla f(\closure{y})=0$, it follows
\begin{align*}
    |z_n| \le |f(\closure{z})-f(\closure{y})|
    \le\sup_{w'\in \B^{n-1}_r(\closure{y})}|\nabla f(w')-\nabla f(\closure{y})| r
    \le C_{\nabla f}^{\alpha_0} r^{1+\alpha_0}.
\end{align*}

\medskip

\emph{Step 3. An analogous estimate for a perimeter-minimizing competitor.}
Now, let $A$ be the perimeter minimizer in $\C_r(y)$ with boundary datum $E$. We provide a cut-off argument to ensure the $\C^{1,\alpha_0}$ estimate \eqref{EQ: Cylinder estimate} for $A$ instead of $E$. Since $A$ has boundary datum $E$ in $\C_r(y)$, we can only modify $A$ away from $\partial \C_r(y)$ in order to preserve the perimeter minimizer property with boundary datum for the cut-off.
\\
We can make $R$ smaller to ensure that $E$ has still subgraph representation in $\C_{R+\lambda}(\tilde{y})$ for all $\tilde{y}\in \C_R(x)$ and some $\lambda>0$. Fix $\varepsilon>0$. According to \eqref{EQ: Cylinder estimate} and the continuity of $\partial E$, there exists $\delta\coleq\delta_\varepsilon\in(0,\min\{\lambda,\varepsilon\})$ such that $z\in E\cap \C_{r+\delta}(y)$ implies $z_n\le c(r)+\varepsilon$.
\begin{figure}[ht]
\centering
     \begin{tikzpicture}[scale=0.3]
     \draw[white] (5,-3)--(5,0);
\filldraw[cyan, fill=cyan!20] (5,4.98)--(10,7.77)--(10,0);
\filldraw[cyan!20, fill=cyan!20] (0,0)--(0,3.1)--(5,5);
\filldraw[cyan!20, fill=cyan!20] (0,0)--(5,5)--(10,0);

\draw (0,0)--(0,10)--(10,10)--(10,0)--(0,0);
\draw[thick, red!50] (0,0)--(10,0);
\draw[gray, dashed] (0,2)--(10,2);
\draw[gray, dashed] (0,8)--(10,8);
\node[gray] at (-1,8) {\small$c(r)$};
\node[gray] at (-1.4,2) {\small$-c(r)$};
\node at (-0.45,10) {\small$r$};
\node at (-0.8,0) {\small$-r$};
\node[red!70] at (5,-1) {\small$\B_r^{n-1}(\closure{y})$};
\node[cyan] at (5,3.5) {\small$E\cap \C_r(y)$};
\node at (5,11) {\small$\C_r(y)$};

\filldraw[thick,cyan, fill=white, variable=\x,domain=5:10,samples=20] 
  plot ({\x},
 {0.25*(\x-5)^1.5+5})
 node[right] {$f$};
\filldraw[thick,cyan, fill= cyan!20,variable=\x,domain=0:5,samples=40] 
  plot ({\x},
 {(-0.2*(-\x+5)^1.5)+0.2*cos(100*\x)+5-0.2*cos(100*5)}
 );
 \filldraw[color=white] (5,4.99)--(5,5.1)--(10,7.799)--(5,4.99);
 \filldraw[thick,cyan, fill=white, variable=\x,domain=9.8:10,samples=5] 
  plot ({\x},
 {0.25*(\x-5)^1.5+5})
 node[right] {$f$};
\filldraw (5,5) circle (0.05);
\node at (5.2,5.5) {\small$y$};
\end{tikzpicture}
\quad
     \begin{tikzpicture}[scale=0.3]

\filldraw[cyan!20, fill=cyan!20] (5,5.45)--(11,8.25)--(11,-1)--(5,-1);
\filldraw[cyan!20, fill=cyan!20] (-0,-1)--(-0,3.11)--(5,5.45)--(5,-1);
\filldraw[cyan!20, fill=cyan!20] (-1,-1)--(0,-1)--(0,3.11)--(-1,2.153);
\filldraw[cyan!20, fill=cyan!20] (11,-1)--(10,-1)--(10,7.77)--(11,8.65);

\draw (-1,-1)--(-1,11)--(11,11)--(11,-1)--(-1,-1);
\draw[thick, red!50] (-1,-1)--(11,-1);
\node at (-2.9,11) {\small$r+\delta$};
\node at (-3.4,-1) {\small$-(r+\delta)$};
\node[red!70] at (5,-2) {\small$\B_{r+\delta}^{n-1}(\closure{y})$};
\node[cyan] at (5,3.5) {\small$A_\varepsilon$};
\node at (5,12) {\small$\C_{r+\delta}(y)$};

\filldraw[thick,cyan, fill=white, variable=\x,domain=10:11,samples=20] 
  plot ({\x},
 {0.25*(\x-5)^1.5+5})
 node[right] {$f$};
\filldraw[thick,cyan, fill= cyan!20,variable=\x,domain=-1:0,samples=40] 
  plot ({\x},
 {(-0.2*(-\x+5)^1.5)+0.2*cos(100*\x)+5-0.2*cos(100*5)}
 );
 \draw[thick, cyan] (0,3.11)--(10,7.8);
\draw[gray, dashed] (-1,2)--(11,2);
\draw[gray, dashed] (-1,8)--(11,8);
 \draw (0,0)--(0,10)--(10,10)--(10,0)--(0,0);
\filldraw (5,5) circle (0.05);
\node at (5.5,5) {\small$y$};

\draw[orange, dashed] (-1,8.7)--(11,8.7);
\node[orange] at (-3.75,8.7) {\small$c(r)+\varepsilon$};

\end{tikzpicture}
\caption{Configuration in the proof of Theorem \ref{Theo: Improvement reg theo}}
    \label{Figure to improved HE}
\end{figure}
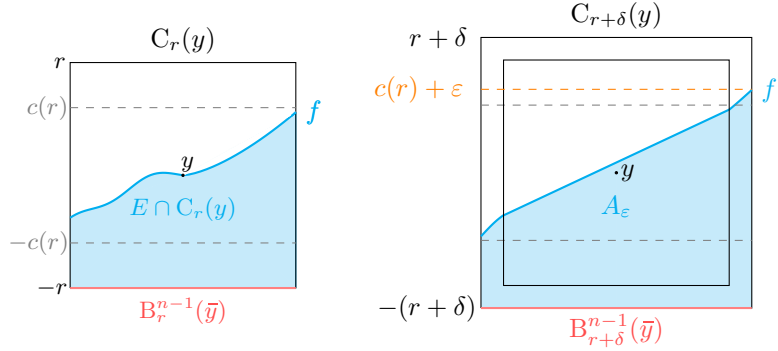

 Define the half-space $H^\gamma\coleq \{w\in \R^n: w_n\le c(r)+\gamma\}$ for $\gamma \ge 0$. For $F\in\mathcal{M}^n$ with $F \setminus \C_r(y) = E \setminus \C_r(y)$, we set $F_\varepsilon\coleq F\cap \C_{r+\delta}(y)$ and record specifically $F=F_\varepsilon$ in $\C_{r+\delta}(y)$, $E_\varepsilon=F_\varepsilon$ outside $\C_r(y)$. Moreover, $E_\varepsilon$ and $A_\varepsilon$ are sets of finite perimeter and finite volume according to Lemma \ref{Lemma: Properties of Perimeters} \ref{Perimeter inequality} since $A$ and $E$ both have finite perimeter in $U$ and $\C_{r+\delta}(y)\ssubset U$ is a set of finite perimeter. Intersecting a set of finite perimeter and finite volume with the half-space $H^\varepsilon$ reduces the perimeter. Therefore it follows
\begin{align*}
  \P(A_\varepsilon\cap H^\varepsilon,\closure{\C}_r(y))
  &= \P(A_\varepsilon\cap H^\varepsilon) - \P(E_\varepsilon, \R^n\setminus \closure{\C}_r(y))\\
  &\le \P(A_\varepsilon)  - \P(E_\varepsilon, \R^n\setminus \closure{\C}_r(y))\\
  &= \P(A,\closure{\C}_r(y))\\
  &\le \P(F,\closure{\C}_r(y))\,.
\end{align*}

Thus, the set $\tilde{A}_\varepsilon \coleq \left(A\cap H^\varepsilon\cap \C_{r+\delta}(y)\right)\cup \left(E\setminus \C_{r+\delta}(y)\right)$, which can also be written as $\tilde{A}_\varepsilon = \left(A\cap H^\varepsilon\cap \C_r(y)\right)\cup \left(E\setminus \C_r(y)\right)$, minimizes the perimeter in $\C_r(y)$ with boundary datum $E$. \\
    Since $\tilde{A}_\varepsilon\to \tilde{A}_0\coleq(A\cap H^0\cap\C_r(y))\cup(E\setminus\C_r(y))$ in $\L^1(\R^n)$ for $\varepsilon\searrow 0$, the semi-continuity of the perimeter and the minimality of $\tilde{A}_\varepsilon$ for all $\varepsilon>0$ imply that $\tilde{A}_0$ is a perimeter minimizer in $\C_r(y)$ with boundary datum $E$, too. Hence, we can replace $A$ with $\tilde{A}_0$ to ensure that $z\in A\cap \C_r(y)$ implies $z_n\le c(r)$.
    \\
    Since the perimeter of a set is equal to the perimeter of the complement, $\R^n\setminus A$ is a perimeter minimizer in $\C_r(y)$ with boundary datum $\R^n\setminus E$. Thus, by an analogous cut-off argument we may assume that $z\in (\R^n\setminus A)\cap \C_r(y)$ implies $z_n\ge-c(r)$. All in all we conclude that $z\in (A\sd \R^n_-)\cap \C_r(y)$ implies $|z_n|\le c(r)$.

\medskip

\emph{Step 4. Improved control on the deviation from minimality and $\C^{1,\alpha_1}$ regularity.}
The inner and outer trace of $E$ with respect to the cylinder $\C_r(y)$ coincide since $E$ is the subgraph of a $\C^1$ function in $\C_{R+\lambda}(y)$. In particular, Remark \ref{Rem: Minimizer estimate pertubation} implies $\P(A,\C_r(y))\le\Xi(E,\C_r(y))$ and thus $\Psi(E,\C_r(y))\le \P(E,\C_r(y))-\P(A,\C_r(y))$.
Finally, we can formulate the main argument. According to the estimate \eqref{EQ: Cylinder estimate} for $E$ and the corresponding estimate for $A$, each point $z\in (E\sd  A)\cap \C_r(y)$ satisfies $|z_n|\le c(r)$. Hence, Hölder's inequality together with the monotonicity of $\Psi$ in the second variable implies
\begin{align*}
    \Psi(E,\B_r(y))&\le \P(E,\C_r(y))-\P(A,\C_r(y))\\
    &= \mathcal{F}_H^{\C_r(y)}(E)-\mathcal{F}_H^{\C_r(y)}(A) + \int_{E\cap \C_r(y)}H\,\d z - \int_{A\cap \C_r(y)}H\,\d z\\
    &\le \int_{(E\sd  A)\cap \C_r(y)}|H|\d z\\
    &\le \int_{\B_r^{n-1}(\closure{y})} \int_{-c(r)}^{c(r)} |H(\closure{z},z_n)|\,\d z_n\,\d \closure{z}\\
    &\le \left|\B_r^{n-1}(\closure{y})\times(-c(r),c(r))\right|^{1-\frac{1}{p}} \|H\|_{\L^p(U)}\\
    &\le \mathrm{c}(n,p,\|H\|_{\L^p(U)},C_{\nabla f}^{\alpha_0})r^{(n+\alpha_0)(1-\frac1p)}\\
    &=\mathrm{c}(n,p,\|H\|_{\L^p(U)},C_{\nabla f}^{\alpha_0})r^{n-1+\alpha_0\left(1-\frac{1}{p}\right)+\frac{p-n}{p}}\,.
\end{align*}
From Theorem \ref{Theo: Tamanini Characterization} we then infer that $\partial E$ is $\C^{1,\alpha_1}$ in $U$ for $\alpha_0<\alpha_1\coleq g(\alpha_0)<1$, where $g(s)\coleq \frac{\left(1-\frac{1}{p}\right)}{2}s+\frac{p-n}{2p}$ for $s\in\R$.

\medskip

\emph{Step 5. Iteration and conclusion.}
Since $0<\frac{\left(1-\frac{1}{p}\right)}{2}<1$, the Banach fixed-point theorem implies that the sequence $(\alpha_k)_{k\in\mathds{N}}$, defined by $\alpha_k\coleq g(\alpha_{k-1})$, converges from below to the unique fixed point $\alpha_\ast$ of $g$. A rearrangement of the formula $\alpha_\ast = g(\alpha_\ast)$ shows $\alpha_\ast = \frac{p-n}{p+1}$. By repeating the argument above, we iteratively infer that $U\cap\partial E$ is $\C^{1,\alpha_k}$ for all $k\in\mathds{N}$. Hence, $U\cap\partial E$ is a $\C^{1,\alpha}$-manifold for all $\alpha<\alpha_\ast$. \qedhere
\end{proof}

\subsection[Remarks on the case of \texorpdfstring{$\L^\infty$}{Linf} curvature]{\boldmath Remarks on the case of \texorpdfstring{$\L^\infty$}{Linf} Curvature} \label{Sec: Dim 2 regularity}

Next we briefly investigate the first variation of Massari's functional at
a minimizer. In view of Theorem \ref{Theo: strong Regularity}, in case $n\le7$
or when considering the regular sets only, we can assume $\C^1$ regularity of the
minimizer and can locally represent it as a subgraph
\begin{equation}\label{EQ: DEF Ef}
  E^f\coleq\{x\in\Omega\times\R\,:\,x_n<f(\cx)\}    
\end{equation}
of $f\in\C^1(\overline\Omega)$ on a bounded open set $\Omega\subseteq\R^{n-1}$.
Then, on the cylinder
\begin{equation}\label{EQ:BD-CYL}
  C\coleq\Omega\times{({-}r,r)}
  \qquad\text{with }r>\|f\|_{\C(\Omega)}\,,
\end{equation}
Massari's functional takes the form already mentioned in the introduction
\begin{equation}\label{EQ: MF REw}
  \mathcal{F}_H^C(E^f)
  =\int_\Omega\sqrt{1+|\nabla f|^2}\,\d y-\int_\Omega\int_{-r}^{f(y)}H(y,s)\,\d s\,\d y
\end{equation}
with $H\in\L^1(C)$. If $H$ is continuous on $C$, it is standard to differentiate
this functional in the sense of the first variation and deduce, for a minimizer
$E^f$ with $f\in\C^1(\overline\Omega)$, the prescribed mean curvature equation
\begin{equation}\label{EQ:PMC}
  {-}\mathrm{div}\,\frac{\nabla f}{\sqrt{1+|\nabla f|^2}}=H(\,\cdot\,,f)
  \qquad\text{on }\Omega
\end{equation}
(with the divergence taken in the weak sense). In particular, if we even have
$f\in\C^2(\Omega)$, then $\frac1{n-1}H$ restricted to
$F(\Omega)=\{x\in\Omega\times\R\,:\,x_n=f(\cx)\}$ is the mean curvature of the
graph $F(\Omega)$ (with respect to the outward normal of $E^f$).

\medskip

If we turn to discontinuous $H\in\L^1(C)$, differentiability of \eqref{EQ: MF REw} breaks down.
However, we now record, as a minor observation, that in specific cases with
$s$-uniform integrability of $H(\,\cdot\,,s)$ at least a differential inequality
closely related to \eqref{EQ:PMC} remains valid. The precise statement is as
follows.
  
\begin{prop}[Differential inequality in case of uniform mean curvature]
    \label{PROP:DIFF-INEQ}
  Consider a bounded open set  $\Omega\subseteq\R^{n-1}$ and
  $f\in\C^1(\overline\Omega)$. If\/ $H\in\L^1(C)$ is a variational mean
  curvature of $E^f$ in $C$ with $E^f$ and $C$ from \eqref{EQ: DEF Ef} and \eqref{EQ:BD-CYL} respectively and if we have
  \begin{equation}\label{EQ:S-UNIF}
    |H(y,s)|\le\Phi(y)\text{ for }\mathcal{L}^n\text{-a.\@e.\@ }(y,s)\in C
    \qquad\text{with }\Phi\in\L^q(\Omega)\,,\,q\in{(1,\infty]}\,,
  \end{equation}
  then $\mathrm{div}\,\frac{\nabla f}{\sqrt{1+|\nabla f|^2}}$ exists weakly in
  $\L^q(\Omega)$ with
  \[
    \bigg\|\mathrm{div}\,\frac{\nabla f}{\sqrt{1+|\nabla f|^2}}\bigg\|_{\L^q(\Omega)}\le\|\Phi\|_{\L^q(\Omega)}\,.
  \]
\end{prop}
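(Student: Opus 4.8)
The plan is to use one-sided variations. Since $E^f$ minimizes $\mathcal{F}_H^C$ and the second term of $\mathcal{F}_H^C$ is monotone under the obvious set inclusions, I would compare $E^f$ with competitors of the form $E^{f+t\varphi}$ for $\varphi\in\C^\infty_{\mathrm{cpt}}(\Omega)$ and small $t\in\R$. For $t>0$ one has $E^f\subseteq E^{f+t\varphi}$ on $\{\varphi>0\}$ and $E^{f+t\varphi}\subseteq E^f$ on $\{\varphi<0\}$, so the difference of the curvature terms is
\[
  \int_\Omega\int_{f(y)}^{f(y)+t\varphi(y)}H(y,s)\,\d s\,\d y\,,
\]
whose absolute value is bounded, using \eqref{EQ:S-UNIF}, by $t\int_\Omega\Phi(y)|\varphi(y)|\dx$. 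On the other hand the area term is classically differentiable, $\frac{\d}{\d t}\big|_{t=0}\int_\Omega\sqrt{1+|\nabla(f+t\varphi)|^2}\,\d y=\int_\Omega\frac{\nabla f\cdot\nabla\varphi}{\sqrt{1+|\nabla f|^2}}\,\d y$. Writing out $\mathcal{F}_H^C(E^f)\le\mathcal{F}_H^C(E^{f+t\varphi})$, dividing by $t>0$, and letting $t\searrow0$ yields
\[
  \int_\Omega\frac{\nabla f\cdot\nabla\varphi}{\sqrt{1+|\nabla f|^2}}\,\d y\le\int_\Omega\Phi\,|\varphi|\dx\,,
\]
and replacing $\varphi$ by $-\varphi$ gives the same bound for the negative, hence
\[
  \left|\int_\Omega\frac{\nabla f\cdot\nabla\varphi}{\sqrt{1+|\nabla f|^2}}\,\d y\right|\le\int_\Omega\Phi\,|\varphi|\dx\qquad\text{for all }\varphi\in\C^\infty_{\mathrm{cpt}}(\Omega)\,.
\]

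Next I would reinterpret the left-hand side. By definition of the weak (distributional) divergence, $T(\varphi)\coleq-\int_\Omega\frac{\nabla f\cdot\nabla\varphi}{\sqrt{1+|\nabla f|^2}}\,\d y$ is exactly the action of the distribution $\mathrm{div}\,\frac{\nabla f}{\sqrt{1+|\nabla f|^2}}$ on $\varphi$. The estimate just derived says $|T(\varphi)|\le\|\Phi\|_{\L^q(\Omega)}\|\varphi\|_{\L^{q'}(\Omega)}$ by Hölder's inequality, with $q'=\frac{q}{q-1}\in[1,\infty)$ the conjugate exponent (here $q>1$ is exactly what makes $q'<\infty$, so that $\C^\infty_{\mathrm{cpt}}(\Omega)$ is dense in $\L^{q'}(\Omega)$). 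Hence $T$ extends to a bounded linear functional on $\L^{q'}(\Omega)$ of norm at most $\|\Phi\|_{\L^q(\Omega)}$, and by the Riesz representation theorem for $(\L^{q'})^\ast=\L^q$ (valid since $q'<\infty$) there is $g\in\L^q(\Omega)$ with $\|g\|_{\L^q(\Omega)}\le\|\Phi\|_{\L^q(\Omega)}$ and $T(\varphi)=\int_\Omega g\varphi\dx$ for all $\varphi\in\C^\infty_{\mathrm{cpt}}(\Omega)$. This $g$ is by definition the weak divergence $\mathrm{div}\,\frac{\nabla f}{\sqrt{1+|\nabla f|^2}}$, which is the claim.

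The only genuine point requiring care — and the step I expect to be the main obstacle — is the justification that for small $t>0$ the competitor $E^{f+t\varphi}$ is indeed an admissible competitor for the minimality of $E^f$ in $C$, i.e.\@ that $E^{f+t\varphi}\sd E^f\ssubset C$, and that the curvature-term difference really equals the iterated integral above with the sign bookkeeping correct on $\{\varphi>0\}$ versus $\{\varphi<0\}$. Admissibility holds because $\varphi$ has compact support in $\Omega$ and $r>\|f\|_{\C(\Omega)}$, so for $|t|$ small enough $f+t\varphi$ still maps into $({-}r,r)$ and the symmetric difference stays in a fixed compact subset of $C$; the sign bookkeeping is handled by noting that $\mathds{1}_{E^{f+t\varphi}}-\mathds{1}_{E^f}$ equals $+\mathds{1}$ on $\{f<x_n<f+t\varphi\}$ when $\varphi>0$ and $-\mathds{1}$ on $\{f+t\varphi<x_n<f\}$ when $\varphi<0$, and in both cases $\int(\mathds{1}_{E^{f+t\varphi}}-\mathds{1}_{E^f})H\dx$ is an integral of $H$ over an $s$-interval of length $t|\varphi(y)|$, so its absolute value is at most $t\Phi(y)|\varphi(y)|$ pointwise in $y$ — no continuity or differentiability of $H$ in $s$ is used. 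Everything else (differentiability of the area functional, density, Riesz representation) is standard.
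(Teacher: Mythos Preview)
Your proposal is correct and follows essentially the same approach as the paper's proof: compare $E^f$ with the graph competitors $E^{f+t\varphi}$, differentiate the area term, bound the curvature-term difference via \eqref{EQ:S-UNIF}, deduce the two-sided estimate $\big|\int_\Omega\frac{\nabla f\cdot\nabla\varphi}{\sqrt{1+|\nabla f|^2}}\,\d y\big|\le\|\Phi\|_{\L^q(\Omega)}\|\varphi\|_{\L^{q'}(\Omega)}$, and conclude by $\L^p$ duality. The only cosmetic differences are that the paper obtains the second inequality by taking $t\nearrow0$ rather than by replacing $\varphi$ with $-\varphi$, and it passes to the H\"older bound $\|\Phi\|_{\L^q}\|\varphi\|_{\L^{q'}}$ directly instead of your intermediate pointwise bound $\int_\Omega\Phi|\varphi|$; your explicit verification of admissibility and of the sign bookkeeping is a welcome addition that the paper leaves implicit.
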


We emphasize that the main case of interest in Proposition \ref{PROP:DIFF-INEQ}
are bounded curvatures $H\in\L^\infty(C)$, for which \eqref{EQ:S-UNIF} holds and
the proposition applies with $q=\infty$ and
$\|\Phi\|_{\L^\infty(\Omega)}=\|H\|_{\L^\infty(C)}$. In contrast, we cannot
generally expect to have \eqref{EQ:S-UNIF} at hand in case of $H\in\L^q(C)$ with
$q<\infty$.

\begin{proof}[Proof of Proposition \ref{PROP:DIFF-INEQ}]
  Since $H$ is a variational mean curvature of $E^f$, the function $G(t,\varphi)\coleq\mathcal{F}_H^{C}\left(E^{f+t\varphi}\right)$, defined for $\varphi\in\C_\mathrm{cpt}^\infty(\Omega)$ and $t\in{(-\delta_\varphi,\delta_\varphi)}$, where $\delta_\varphi>0$ is small enough such that $\|f+t\varphi\|_{\C(\Omega)}<r$ for all $t\in(-\delta_\varphi,\delta_\varphi)$, attains its minimum at $t=0$. Taking into account equation \eqref{EQ: MF REw}, we can estimate
    \begin{align*}
        0&\le \limsup_{t\searrow0} \frac{G(t,\varphi)-G(0,\varphi)}{t} \\
        & = \frac{\d }{\d t}\Bigl|_{t=0}\int_{\Omega} \sqrt{1+|\nabla f+t\nabla\varphi|^2}\d y - \liminf_{t\searrow 0} \frac{1}{t} \int_\Omega \int_{f(y)}^{f(y)+t\varphi(y)}H(y,s)  \d s\d y \\   
        &\le \int_\Omega \frac{\nabla f \cdot\nabla \varphi}{\sqrt{1+|\nabla f|^2}}\d y + \|\varphi\|_{\L^{q'}(\Omega)}\|\Phi\|_{\L^q(\Omega)} 
    \end{align*}
    for $q'\coleq \frac{q}{q-1}\in[1,\infty)$ such that $q$ and $q'$ are conjugate exponents. Analogously, we get
    \begin{align*}
        0 \le \limsup_{t\searrow0} \frac{G(-t,\varphi)-G(0,\varphi)}{t} \le   - \int_\Omega \frac{\nabla f \cdot \nabla\varphi}{\sqrt{1+|\nabla f|^2}}\d y+\|\varphi\|_{\L^{q'}(\Omega)}\|\Phi\|_{\L^q(\Omega)}
    \end{align*}
for all $\varphi\in\C^\infty_\mathrm{cpt}(\Omega)$. In conclusion we derived the estimate
    \begin{align*}
        \left|\int_\Omega \frac{\nabla f \cdot \nabla \varphi}{\sqrt{1+|\nabla f|^2}}\d y\right|\le\|\Phi\|_{\L^q(\Omega)}  \|\varphi\|_{\L^{q'}(\Omega)}
    \end{align*}
    for all $\varphi\in\C_\mathrm{cpt}^\infty(\Omega)$. Thus, 
\[
\C^\infty_\mathrm{cpt}(\Omega)\to\R;\, \varphi\mapsto  \int_\Omega \frac{\nabla f\cdot \nabla \varphi}{\sqrt{1+|\nabla f|^2}}\d y
\]
can be extended to a bounded linear functional on $\L^{q'}(\Omega)$. By $\L^p$ duality, there exists $g\in\L^q(\Omega)$ such that $\int_\Omega \frac{\nabla f \cdot \nabla \varphi}{\sqrt{1+|\nabla f|^2}}\d y= \int_\Omega g \varphi\d y$ for all $\varphi\in\C^\infty_\mathrm{cpt}(\Omega)$ and $\|g\|_{\L^q(\Omega)}\le \|\Phi\|_{\L^q(\Omega)} $. By definition, $\mathrm{div}\frac{\nabla f}{\sqrt{1+|\nabla f|^2}}$ exists weakly in $\L^q(\Omega)$ and coincides with $g$.
\end{proof}

\begin{rem}
  In the setting of Proposition \ref{PROP:DIFF-INEQ} with $q=1$, a similar reasoning gives existence of $\mathrm{div}\,\frac{\nabla f}{\sqrt{1+|\nabla f|^2}}$ as a finite Radon measure on $\Omega$ with its total variation bounded by $\|\Phi\|_{\L^1(\Omega)}$. However, we do not pursue this case any further.
\end{rem}

In view of the last result, we are able to improve on Theorem \ref{Theo: Improvement reg theo} and obtain even $\C^{1,1}$ regularity in the special case $H\in\L^\infty(U)$, $n=2$, where the divergence is simply the derivative.

\begin{prop}[$\C^{1,1}$ regularity for the case of $\L^\infty$ curvature in $\R^2$]\label{Prop: n=2 special regularity}
Let $E\subseteq\R^2$ be a set of finite perimeter in an open set $U\subseteq \R^2$ with variational mean curvature $H\in \L^\infty(U)$ in $U$. Then $\partial^\ast\!E=\partial E$ is a $\C^{1,1}$-manifold in $U$.
\end{prop}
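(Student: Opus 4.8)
The plan is to feed the $\C^1$ regularity coming from Massari's regularity theorem into the differential inequality of Proposition \ref{PROP:DIFF-INEQ}, and then exploit that in dimension $n=2$ the divergence appearing there is a plain derivative.

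\emph{Setting up a graph.} Since $H\in\L^\infty(U)\subseteq\L^p(U)$ for every $p\in{(2,\infty)}$ and $n=2<p$, Theorem \ref{Theo: strong Regularity} applies. Part (ii), taken with $s=0\in(n{-}8,n]=(-6,2]$, yields $\mathcal{H}^0\big((\partial E\setminus\partial^\ast\!E)\cap U\big)=0$, hence $\partial E\cap U=\partial^\ast\!E\cap U$; the footnote to part (i) shows that around any fixed point $x\in\partial^\ast\!E\cap U=\partial E\cap U$ the set $E$ is, after a rotation, the subgraph of some $f\in\C^1(\overline\Omega)$ over a bounded open interval $\Omega\subseteq\R$. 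Shrinking $\Omega$ and translating vertically, we may moreover arrange a cylinder $C\coleq\Omega\times{(-r,r)}\Subset U$ with $r>\|f\|_{\C(\Omega)}$ and $E\cap C=E^f\cap C$ for $E^f$ as in \eqref{EQ: DEF Ef}. The minimality of $E$ for $\mathcal{F}_H^U$ localizes to $C$ (extend a competitor whose symmetric difference with $E$ is $\Subset C$ by $E$ outside $C$; all perimeter and volume differences are then supported in $C$), so $H|_C$ is a variational mean curvature of $E$ in $C$, and hence, since $\mathcal{F}_{H|_C}^C$ depends on a set only through its intersection with $C$, also of $E^f$ in $C$.

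\emph{Applying the differential inequality.} Proposition \ref{PROP:DIFF-INEQ}, used with $q=\infty$ and the constant bound $\Phi\equiv\|H\|_{\L^\infty(C)}$, now gives that $\mathrm{div}\,\frac{\nabla f}{\sqrt{1+|\nabla f|^2}}$ exists weakly in $\L^\infty(\Omega)$ with norm at most $\|H\|_{\L^\infty(C)}$. As $n=2$, so that $\Omega$ is an interval, this divergence is nothing but the weak derivative, and therefore the scalar function $g\coleq\frac{f'}{\sqrt{1+(f')^2}}$ belongs to $\mathrm{W}^{1,\infty}(\Omega)$ and is Lipschitz on $\Omega$. It remains to transfer this back to $f$ itself: since $f\in\C^1(\overline\Omega)$ on the bounded interval $\Omega$, the number $M\coleq\|f'\|_{\C(\Omega)}$ is finite, so $g$ only takes values in the compact subset $\big[{-}\tfrac{M}{\sqrt{1+M^2}},\tfrac{M}{\sqrt{1+M^2}}\big]$ of $(-1,1)$. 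On this set the function $\psi(s)\coleq\frac{s}{\sqrt{1-s^2}}$ is smooth, hence Lipschitz, and a direct computation gives $f'=\psi\circ g$. Thus $f'$, being a composition of Lipschitz maps, is Lipschitz on $\Omega$, i.e.\ $f\in\C^{1,1}(\Omega)$. As $x\in\partial E\cap U$ was arbitrary, $\partial^\ast\!E=\partial E$ is locally a $\C^{1,1}$-graph, hence a $\C^{1,1}$-manifold in $U$.

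The argument mostly assembles earlier results; the two points needing some care are the localization making $H|_C$ a variational mean curvature on the small cylinder $C$, which is what lets us invoke Proposition \ref{PROP:DIFF-INEQ}, and the transfer of Lipschitz regularity from $g$ to $f'$ — here the a priori bound $\|f'\|_{\C(\Omega)}<\infty$, guaranteed by the $\C^1$ regularity from Theorem \ref{Theo: strong Regularity}, is exactly what keeps $g$ away from the singularities $\pm1$ of $\psi$, so that $\psi\circ g$ stays Lipschitz.
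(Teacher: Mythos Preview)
Your proof is correct and follows essentially the same route as the paper's own proof: localize to a $\C^1$ graph via Massari's theorem, apply Proposition \ref{PROP:DIFF-INEQ} with $q=\infty$ to get $\frac{f'}{\sqrt{1+(f')^2}}\in\mathrm{W}^{1,\infty}(\Omega)$, and then invert via the Lipschitz map $s\mapsto s/\sqrt{1-s^2}$ on the compact range determined by $M=\|f'\|_{\C(\Omega)}$. The only imprecision is the inclusion $\L^\infty(U)\subseteq\L^p(U)$, which fails when $|U|=\infty$; simply invoke Theorem \ref{Theo: strong Regularity} directly with $p=\infty$ (which the statement allows) or localize first.
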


\begin{proof}
By Massari's regularity theorem, $\partial^\ast E\cap U$ is of class $\C^1$ with $\partial^\ast\!E=\partial E$ in $U$. We localize and exploit that the perimeter and the variational mean curvature are invariant under translation and rotation. Hence, it suffices to consider $\partial E$ in a rectangle $C=\Omega\times{(-r,r)}\Subset U$ over a bounded open interval $\Omega\subseteq\R$ such that $E$ coincides inside $C$ with the subgraph of some function $f\in\C^1(\closure{\Omega})$ with $r>\|f\|_{\C(\Omega)}$.
\\
By Proposition \ref{PROP:DIFF-INEQ}, we have $\frac{f'}{\sqrt{1+f'^2}}\in\mathrm{W}^{1,\infty}(\Omega)$, which means that $\frac{f'}{\sqrt{1+f'^2}}$ is a Lipschitz function. Since $h(s)\coleq\frac{s}{\sqrt{1-s^2}}$ is Lipschitz continuous on $\left[0,\frac{M}{\sqrt{1+M^2}}\right]$, where $M\coleq \|f'\|_{\C(\Omega)}<\infty$, the function $f'=h\circ \frac{f'}{\sqrt{1+f'^2}}$ is also Lipschitz.
\end{proof}

However, the last example of \cite[Remark 3.6]{massari1994variational} shows that the existence of
an $\L^\infty$ mean curvature does not imply
$\mathrm{C}^{1,1}$ regularity
in general dimensions. Indeed, for $n=3$ and $s\in{(0,1)}$, the boundary of
\begin{align*}
    E\coleq\left\{(x,y,z)\in \B^2_s(0)\times\R: z<f(x,y)\right\}
\end{align*}
with 
\begin{align*}
    f(x,y)\coleq\begin{cases}(x^2-y^2)\sqrt{-\log(\sqrt{
    x^2+y^2}}) &\text{for } (x,y)\in \B^2_1(0)\setminus\{0\}\\
    0 &\text{for }x=y=0
    \end{cases}
\end{align*}
is $\C^{1,\alpha}$ for all $\alpha\in(0,1)$ but not $\C^{1,1}$ in $\B_s^2(0)\times\R$. Moreover, one can check that the constant-in-the-third-component extension $V$ of the outward unit normal of $E$ defines a vector field in $\mathrm{W}^{1,1}(U;\R^n)\cap \C(U;\R^n)$ with divergence in $\L^\infty(U)$, where $U\coleq\B^2_s(0)\times {(-r,r)}$, $r>\|f\|_{\C(\B_s^2(0))}$, and Proposition \ref{Prop: For counterExample} below ensures the divergence of $V$ to be a variational mean curvature of $E$ in $U$. Hence, for $n\ge3$ we cannot expect analogous $\C^{1,1}$ regularity results.

\section{Sharp counterexamples}\label{Sec: Optimality}

In this section we confirm the optimality of the Hölder exponent by
constructing the sharp counterexamples announced in the introduction.

\subsection{An explicit example in two dimensions}\label{subsec:2d-example}

In order to determine a suitable curvature for the subsequent counterexample, we make use of the following proposition, which allows to obtain a variational mean curvature in some analogy with the definition of the classical mean curvature. Slightly differing versions of this statement have been given e.g. in \cite[pp.\@ 152 sq.\@]{BarozziTamanini1988penalty}, \cite[Lemma 1.3]{GMT1993boundaries}, and \cite[Proposition 4.1]{massari1994variational}. However, we find it worth recording and proving a comparably sharp version of the statement (even though we subsequently need this only with $\Gamma=\emptyset$).

\begin{prop}[Divergence of a normal as variational mean curvature]\label{Prop: For counterExample}
Let $E\subseteq\R^n$ be an open set of finite perimeter in $U$. In addition, assume that the \textup{(}weak\textup{)} outward unit normal $\nu_E$ of\/ $E$ extends \textup{(}in the sense of\/ $V=\nu_E$ holding $\mathcal{H}^{n-1}$-a.e. on $\partial^\ast\!E\cap U$\textup{)} to a vector field $V\in\mathrm{W}^{1,1}(U\setminus\Gamma;\R^n)\cap \C(U\setminus\Gamma;\R^n)$ with a relatively closed\/ $\mathcal{H}^{n-1}$-null set $\Gamma\subseteq U$ and with $|V|\le 1$ on $U\setminus\Gamma$. Then the function $H=\mathrm{div}\,V\in\mathrm{L}^1(U)$ is a variational mean curvature of\/ $E$ in $U$.
\end{prop}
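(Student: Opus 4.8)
The plan is to show that for every competitor $F\in\mathcal{M}^n$ with $F\sd E\ssubset U$ one has $\mathcal{F}_H^U(E)\le\mathcal{F}_H^U(F)$, i.e.
\[
  \P(E,U)-\int_{E\cap U}\mathrm{div}\,V\dx\le\P(F,U)-\int_{F\cap U}\mathrm{div}\,V\dx\,,
\]
which after rearranging amounts to
\[
  \P(E,U)-\P(F,U)\le\int_{(E\setminus F)\cap U}\mathrm{div}\,V\dx-\int_{(F\setminus E)\cap U}\mathrm{div}\,V\dx\,.
\]
The natural tool is the generalized divergence theorem quoted in the preliminaries: for a set $G$ of locally finite perimeter one expects $\int_{G}\mathrm{div}\,V=\int_{\partial^\ast G}V\cdot\nu_G\,\d\mathcal{H}^{n-1}$. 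Applying this to $E$ gives $\int_{E\cap U}\mathrm{div}\,V=\int_{\partial^\ast E\cap U}V\cdot\nu_E\,\d\mathcal{H}^{n-1}=\P(E,U)$, because $V=\nu_E$ $\mathcal{H}^{n-1}$-a.e.\ on $\partial^\ast E\cap U$ and $\P(E,U)=\mathcal{H}^{n-1}(\partial^\ast E\cap U)$. Applying it to $F$ and using $|V|\le1$ gives $\int_{F\cap U}\mathrm{div}\,V=\int_{\partial^\ast F\cap U}V\cdot\nu_F\,\d\mathcal{H}^{n-1}\le\mathcal{H}^{n-1}(\partial^\ast F\cap U)=\P(F,U)$. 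Subtracting the two relations yields exactly $\mathcal{F}_H^U(E)\le\mathcal{F}_H^U(F)$, so the whole argument is a two-line consequence once the divergence theorem is legitimate.

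The main obstacle is precisely the legitimacy of the divergence theorem: the version quoted in the excerpt requires $V\in\mathrm{W}^{1,1}(U;\R^n)\cap\C_{\mathrm{cpt}}(U;\R^n)$, whereas here $V$ is only in $\mathrm{W}^{1,1}(U\setminus\Gamma)\cap\C(U\setminus\Gamma)$, has no compact support, and $\Gamma$ may be a genuine (though $\mathcal{H}^{n-1}$-null) singular set. I would handle the three issues in turn. For the lack of compact support: since $F\sd E\ssubset U$, choose a cut-off $\eta\in\C^\infty_{\mathrm{cpt}}(U)$ with $\eta\equiv1$ on a neighborhood of $\closure{F\sd E}$, and work with $\eta V$; the difference term $\int_{(E\sd F)\cap U}\mathrm{div}((\eta-1)V)$ vanishes because $\eta-1$ is supported away from $E\sd F$, so replacing $V$ by $\eta V$ changes nothing in the displayed inequality while giving compact support. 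Alternatively, and more cleanly, I would not cut off $V$ but rather exploit that $\mathds{1}_E-\mathds{1}_F$ is a $\mathrm{BV}$ function with compact support in $U$ and pair it against $\mathrm{div}\,V$ via $\int(\mathds{1}_E-\mathds{1}_F)\mathrm{div}\,V=-\int V\cdot\d(\mathrm{D}\mathds{1}_E-\mathrm{D}\mathds{1}_F)$, the integration-by-parts formula for $\mathrm{BV}\cap\mathrm{compact\ support}$ against $\mathrm{W}^{1,1}\cap\mathrm{C}$ vector fields.

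The removal of the singular set $\Gamma$ is the genuinely delicate point. Since $\Gamma$ is relatively closed in $U$ and $\mathcal{H}^{n-1}(\Gamma)=0$, I would construct cut-off functions $\zeta_k\in\C^\infty_{\mathrm{cpt}}(U\setminus\Gamma)$ with $0\le\zeta_k\le1$, $\zeta_k\to1$ pointwise on $U\setminus\Gamma$, and crucially $\int|\nabla\zeta_k|\,\d|\mathrm{D}\mathds{1}_E|\to0$ and $\int|\nabla\zeta_k|\,\d|\mathrm{D}\mathds{1}_F|\to0$; such a family exists because the perimeter measures $\P(E,\cdot)$ and $\P(F,\cdot)$ are $\mathcal{H}^{n-1}$-absolutely continuous (being restrictions of $\mathcal{H}^{n-1}$ to the reduced boundaries) and hence assign zero mass to $\Gamma$ — this is the standard "removability of $\mathcal{H}^{n-1}$-null sets for sets of finite perimeter'' fact, typically obtained by covering $\Gamma$ by balls of small total $(n{-}1)$-content. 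Then apply the valid integration-by-parts with $\zeta_k V$ in place of $V$ and pass to the limit: the volume integrals $\int_{E\cap U}\zeta_k\,\mathrm{div}\,V$ and $\int_{F\cap U}\zeta_k\,\mathrm{div}\,V$ converge to the desired integrals by dominated convergence (using $\mathrm{div}\,V\in\L^1(U)$, which I would first justify by noting $\mathrm{div}\,V$ is the weak divergence on $U\setminus\Gamma$ and extends to an $\L^1(U)$ function — if this is not part of the hypothesis it must be read into "$H=\mathrm{div}\,V\in\L^1(U)$''), the boundary integrals $\int_{\partial^\ast E\cap U}\zeta_k V\cdot\nu_E\,\d\mathcal{H}^{n-1}$ converge to $\int_{\partial^\ast E\cap U}V\cdot\nu_E\,\d\mathcal{H}^{n-1}=\P(E,U)$ by dominated convergence on the boundary, and likewise the error terms $\int V\cdot(\nabla\zeta_k)\,\d\mathrm{D}\mathds{1}_E$ and $\int V\cdot(\nabla\zeta_k)\,\d\mathrm{D}\mathds{1}_F$ vanish in the limit by the choice of $\zeta_k$ together with $|V|\le1$. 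Combining the limits gives the two relations $\int_{E\cap U}\mathrm{div}\,V=\P(E,U)$ and $\int_{F\cap U}\mathrm{div}\,V\le\P(F,U)$, hence the claim. I expect the construction of the $\zeta_k$ with controlled gradient-perimeter interaction to be the technical heart of the proof, and the rest to be routine limiting arguments.
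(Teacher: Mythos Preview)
Your core idea --- apply the divergence theorem to obtain $\int_{E\cap U}\mathrm{div}\,V=\P(E,U)$ from $V=\nu_E$ on $\partial^\ast E$, and $\int_{F\cap U}\mathrm{div}\,V\le\P(F,U)$ from $|V|\le1$ --- is exactly what the paper does in the case $\Gamma=\emptyset$. The paper localizes to a smooth bounded $\tilde U$ with $E\sd F\ssubset\tilde U\ssubset U$ and observes that the boundary contributions on $\partial\tilde U$ for $E$ and $F$ coincide (since $E=F$ there), which plays the role of your cut-off $\eta$.

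For the removal of $\Gamma$, however, your cut-off argument as written has a concrete error. When you apply the divergence theorem with $\zeta_kV$ in place of $V$, the extra term is the \emph{volume} integral
\[
  \int_{G}V\cdot\nabla\zeta_k\,\d x\,,
\]
not a perimeter integral $\int V\cdot\nabla\zeta_k\,\d\mathrm{D}\mathds{1}_G$. Consequently the property you need is $\int_U|\nabla\zeta_k|\,\d x\to0$, and this is precisely what the ball-covering of an $\mathcal{H}^{n-1}$-null set gives (balls of radii $r_i$ with $\sum r_i^{n-1}$ small and $|\nabla\zeta_k|\lesssim r_i^{-1}$ on the $i$-th annulus). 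By contrast, the quantity you wrote, $\int|\nabla\zeta_k|\,\d|\mathrm{D}\mathds{1}_E|$, does \emph{not} tend to zero in general: the fact that $|\mathrm{D}\mathds{1}_E|(\Gamma)=0$ says nothing once you multiply by the unbounded weight $|\nabla\zeta_k|$ (for $\partial^\ast E$ passing through the covering balls one picks up terms of order $\sum r_i^{n-2}$). So your justification ``perimeter measures assign zero mass to $\Gamma$'' is used in the wrong place; it is needed only for the dominated-convergence step $\int_{\partial^\ast G}\zeta_kV\cdot\nu_G\,\d\mathcal{H}^{n-1}\to\int_{\partial^\ast G}V\cdot\nu_G\,\d\mathcal{H}^{n-1}$, which is fine since $|\zeta_k|\le1$.

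With this correction your cut-off route goes through. The paper proceeds differently for $\Gamma\neq\emptyset$: rather than cutting off $V$, it excises small neighborhoods $N_k\supset\Gamma$ with $\P(N_k)\to0$, works on $U_k\coleq U\setminus\overline{M_k}$ (slightly smaller excisions), replaces the competitor $F$ by $\tilde F_k\coleq(F\setminus N_k)\cup(E\cap N_k)$ so that $E\sd\tilde F_k\ssubset U_k$, applies the $\Gamma=\emptyset$ case there, and passes to the limit using $\P(E,\Gamma)=\P(F,\Gamma)=0$ and the perimeter sub-additivity lemma. Both approaches exploit the same two facts --- $\mathcal{H}^{n-1}(\Gamma)=0$ gives small-perimeter excisions or cut-offs with small $\L^1$ gradient, and the perimeter measures do not charge $\Gamma$ --- but your method modifies the test field while the paper modifies the competitor and domain.
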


\begin{proof}
  We first assume $\Gamma=\emptyset$ and thus $V\in \mathrm{W}^{1,1}(U;\R^n)\cap \C(U;\R^n)$. Let $F\in\mathcal{M}^n$ be a set of finite perimeter in $U$ such that $E\sd  F\ssubset U$. We will prove
  \begin{equation}\label{eq:minimality-extended-normal}
    \P(F,U)-\int_{F\cap U} \mathrm{div}\,V\dx
    \geq \P(E,U)-\int_{E\cap U} \mathrm{div}\,V\dx\,.
  \end{equation}
  Since we can choose a smooth open set $\tilde{U}\subseteq \R^n$ such that $E\sd  F\ssubset \tilde{U} \ssubset U$ and since it is enough to prove the inequality with $\tilde{U}$ instead of $U$, in the sequel we directly assume that $U$ itself is smooth and bounded with $V\in\C(\overline U;\R^n)$. By the structure theorem of De Giorgi and the generalized divergence theorem on the bounded finite-perimeter set $U$, we obtain
  \[
    \P(F,U)-\int_{F\cap U}\mathrm{div}\,V\dx
    = \int_{\partial^\ast\!F\cap U}1\,\d \mathcal{H}^{n-1}
      - \int_{\partial^\ast(F\cap U)}V\cdot\nu_{F\cap U}\,\d \mathcal{H}^{n-1}\,.
  \]
  Since $F$ differs from $E$ only away from $\partial U$, by locality we can split the last term into a portion inside the open set $U$ and a boundary portion on $\partial U$. In fact, $\nu_{F\cap U}=\nu_F$ holds on $\partial^\ast(F\cap U)\cap U=\partial^\ast\!F\cap U$, and $\nu_{F\cap U}=\nu_{E\cap U}$ holds on $\partial^\ast(F\cap U)\cap\partial U=\partial^\ast(E\cap U)\cap\partial U$. Thus, we get
  \begin{align*}
    &\P(F,U)-\int_{F\cap U}\mathrm{div}\,V\dx\\
    = &\int_{\partial^\ast\!F\cap U}\big(1-V\cdot\nu_F\big)\,\d\mathcal{H}^{n-1}
      - \int_{\partial^\ast(E\cap U)\cap\partial U}V\cdot\nu_{E\cap U}\,\d\mathcal{H}^{n-1}\,.
  \end{align*}
  Here, only the first term $I_F \coleq \int_{\partial^\ast\!F\cap U}\big(1-V\cdot\nu_{F}\big)\,\d\mathcal{H}^{n-1}$ on the right-hand side depends on $F$ and satisfies $I_F \geq 0$ by the Cauchy-Schwarz inequality and the assumption $|V|\le1$. Moreover, the same rewriting applies with $E$ instead of $F$, and in view of the $\mathcal{H}^{n-1}$-a.e. equality $V=\nu_E$ on $\partial^\ast\!E\cap U$ we have $I_F \geq 0 = I_E$. Therefore, altogether we infer that \eqref{eq:minimality-extended-normal} holds as required.

  \smallskip
  
  Now we turn to $V\in\mathrm{W}^{1,1}(U\setminus\Gamma;\R^n)\cap\C(U\setminus\Gamma;\R^n)$ with non-empty $\Gamma$. Once more let $F\in\mathcal{M}^n$ be a set of finite perimeter in $U$ such that $E\sd F\ssubset U$. Similar as above it suffices to establish \eqref{eq:minimality-extended-normal} where we can and do assume that $U$ is bounded and even the closure $\overline\Gamma$ of $\Gamma\subseteq U$ in $\R^n$ satisfies $\mathcal{H}^{n-1}(\overline\Gamma)=0$. Taking into account the definition of the Hausdorff measure and the compactness of $\overline\Gamma$, we then exploit $\mathcal{H}^{n-1}(\overline\Gamma)=0$ to cover $\overline\Gamma$ by finite unions of small balls which successively yield bounded open sets $N_k\subseteq\R^n$ with $N_{k+1}\subseteq N_k$ and $\bigcap_{k=1}^\infty N_k=\overline\Gamma$ such that $\P(N_k)<\frac1k$ for $k\in\mathds{N}$. We also fix slightly smaller open sets $M_k\ssubset N_k$ with $M_{k+1}\subseteq M_k$ and $\bigcap_{k=1}^\infty M_k=\Gamma$, and we introduce $U_k\coleq U\setminus\overline{M_k}$ with $U_k\subseteq U_{k+1}$ and $\bigcup_{k=1}^\infty U_k=U\setminus\Gamma$. Then, we have $E\sd  \tilde{F}_k\ssubset U_k$ for $\tilde{F}_k\coleq(F\setminus N_k)\cup(E\cap N_k)$, and from the preceding part of the reasoning we obtain $\mathrm{div}\,V\in\mathds{H}^1(E,U_k)$ for all $k\in\mathds{N}$. Moreover, since De Giorgi's structure theorem guarantees $\P(E,\Gamma)=0$ and we clearly have $|\Gamma|=0$, we find
  \begin{align}\label{eq:estimate-E,U-to-F,U_k}
       &\P(E,U)- \int_{E\cap U} \mathrm{div}\,V\dx\notag\\
       = &\lim_{k\to\infty}\bigg(\P(E,U_k) -\int_{E\cap U_k} \mathrm{div}\,V\dx\bigg)\notag\\
      \le &\limsup_{k\to\infty}\bigg(\P(\tilde{F}_k,U_k) -\int_{\tilde{F}_k \cap U_k} \mathrm{div}\,V\dx\bigg)\notag\\
      \le &\limsup_{k\to\infty}\bigg(\P(F\setminus N_k, U_k) +\P( E\cap N_k,U_k)  -\int_{\tilde{F}_k \cap U_k} \mathrm{div}\,V\dx\bigg)\,,
  \end{align}
  where we used Lemma \ref{Lemma: Properties of Perimeters} \ref{Perimeter inequality} in the last step. Using once more Lemma \ref{Lemma: Properties of Perimeters} \ref{Perimeter inequality} and exploiting $\P(F,\Gamma)=0$, we see
  \[
    \P(F\setminus N_k, U_k)
    \le \P(F,U_k)+\P(\R^n\setminus N_k,U_k)
    \le \P(F,U_k)+\P(N_k)
    \overset{k\to\infty}{\longrightarrow} \P(F,U)\,.
  \]
  Similarly, taking into account $\partial^\ast(E\cap N_k)\subset(\partial^\ast\!E\cap N_k)\cup\partial^\ast\! N_k$ and $\P(E,\Gamma)=0$, we get
  \[
    \P(E\cap N_k,U_k)
    \le\P(E\cap N_k,U)
    \le\P(E,N_k\cap U)+\P(N_k)
    \overset{k\to\infty}{\longrightarrow} 0\,.
  \]
  Finally, we observe
  \[
    \int_{\tilde{F}_k \cap U_k} \mathrm{div}\,V\dx
    \overset{k\to\infty}{\longrightarrow} \int_{F\cap U} \mathrm{div}\,V\dx\,.
  \]
  Using the convergences obtained on the right-hand side of \eqref{eq:estimate-E,U-to-F,U_k}, we arrive at \eqref{eq:minimality-extended-normal} and have thus proved $\mathrm{div}\,V\in \mathds{H}^1(E,U)$ in the general case.
\end{proof}

  We now turn to the counterexample in dimension $n=2$. Exploiting the
  geometry and Proposition \ref{Prop: For counterExample}, we are able to specify
  an explicit variational mean curvature with suitable integrability
  properties.

\begin{theo}[Counterexample to Massari-type regularity for $n=2$, $\alpha > \frac{p-2}{p+1}$]\label{Ex: Optimal Example}
  Let $n=2$, $\alpha\in{(0,1)}$, and consider the set of finite perimeter
  \[
    E\coleq\left\{x\in {(-1,1)}^2 : x_2<\mathrm{sgn}(x_1) |x_1|^{1+\alpha}\right\}\,, \\
  \]
  whose boundary $\partial E$ is $\C^{1,\alpha}$ in ${(-1,1)}^2$ but not $\C^{1,\beta}$ near $0$ for any $\beta >\alpha$. Then, there exists a variational mean curvature $H$ of\/ $E$ in ${(-1,1)}^2$ such that $H\in\mathrm{L}^p\big({(-1,1)}^2\big)$ for all\/ $p\in{[1,\infty)}$ with $\alpha>\frac{p-2}{p+1}$.
\end{theo}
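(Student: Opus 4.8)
The plan is to build the vector field $V$ explicitly by extending the outward unit normal $\nu_E$ off $\partial E$ and then invoke Proposition \ref{Prop: For counterExample} to conclude that $H=\operatorname{div}V$ is a variational mean curvature of $E$ in $(-1,1)^2$; the only real work is to write down the extension cleverly enough that the integrability $H\in\L^p$ comes out with the stated threshold. First I would parametrize $\partial E$ near the origin: for $x_1>0$ the boundary is the graph $x_2=x_1^{1+\alpha}=:\gamma(x_1)$, with outward (i.e.\ upward-pointing, since $E$ is a subgraph) unit normal $\nu_E=\frac{(-\gamma'(x_1),1)}{\sqrt{1+\gamma'(x_1)^2}}$, and symmetrically for $x_1<0$ using the oddness $\gamma\mapsto-\gamma(-\cdot)$. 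The natural extension is to declare $V$ constant along the normal lines to $\partial E$ --- equivalently, $V(x)$ is the unit normal at the nearest boundary point --- on a one-sided tubular neighborhood, and then patch with something trivial (e.g.\ the constant vectors $\pm e_2$, or a radial interpolation) far from $0$ and on the complementary side, being careful that $|V|\le1$ everywhere and $V\in\mathrm{W}^{1,1}\cap\C$. Since $n=2$ we may actually work with $\Gamma=\emptyset$: the subgraph geometry means the nearest-point projection onto $\partial E$ is well defined and Lipschitz on a full (two-sided) neighborhood of each boundary point away from $0$, and near $0$ one can instead use a direct interpolation formula in the angular variable, so no singular set is needed.

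Next I would compute $\operatorname{div}V$. Writing $V$ in the normal-line coordinates $(t,\tau)$ where $t$ runs along $\partial E$ (arclength) and $\tau$ is signed distance, the divergence of a unit field that is constant in $\tau$ is the signed curvature $\kappa(t)$ of $\partial E$ divided by the Jacobian factor $1+\tau\kappa(t)$; along $\partial E$ itself this is just $\kappa$. Since $\gamma(s)=s^{1+\alpha}$ has $\gamma''(s)=\alpha(1+\alpha)s^{\alpha-1}$ and $\gamma'(s)=(1+\alpha)s^\alpha\to0$, the curvature behaves like $\kappa(s)\sim c\,s^{\alpha-1}$ as $s\downarrow0$, and in the ambient variable $x$ near $0$ one gets $|\operatorname{div}V(x)|\lesssim |x|^{\alpha-1}$ (the $\tau$-correction and the two-sided/far-field patches contribute only bounded terms). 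Here is where the case $n=2$ is genuinely convenient: $|x|^{\alpha-1}$ is in $\L^p$ of a bounded planar neighborhood of $0$ precisely when $(1-\alpha)p<2$, i.e.\ $p<\frac2{1-\alpha}$, which rearranges to exactly $\alpha>\frac{p-2}{p+1}$ --- wait, let me recheck: $(1-\alpha)p<2\iff \alpha>1-\tfrac2p=\tfrac{p-2}{p}$. This is the weaker threshold $\tfrac{p-2}{p}$, not $\tfrac{p-2}{p+1}$, so a naive constant-along-normals extension is \emph{too} good and in fact the stated (sharp, matching the regularity theorem) exponent forces a more singular field; concretely I would instead choose $V$ whose angular interpolation near $0$ is built so that $\operatorname{div}V$ blows up like $|x|^{-(n-\alpha)/?}$... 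The correct move, following the known constructions, is to interpolate the normal directions across the ``bad'' side in a region that is itself $\C^{1,\alpha}$-thin (a cusp of opening $\sim|x|^{1+\alpha}$), over which the angular variation $\sim1$ must be spread: the interpolation gradient is then of order $|x|^{-(1+\alpha)}$ on a set whose $p$-th power integrates over the cusp to $\int_0^1 r^{-p(1+\alpha)}\cdot r^{1+\alpha}\,dr<\infty\iff p(1+\alpha)-(1+\alpha)<1\iff (p-1)(1+\alpha)<1$... that gives yet another exponent, so the actual geometry of where the blow-up lives must be pinned down carefully against $\alpha_{\mathrm{opt}}(2,p)=\frac{p-2}{p+1}$.

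The main obstacle, therefore, is precisely this bookkeeping: designing the extension $V$ (the set on which $\operatorname{div}V$ is large, its pointwise size there, and the measure of that set as a function of distance to $0$) so that the resulting $\L^p$ integral $\int |\operatorname{div}V|^p$ converges exactly for $p<\frac{1+\alpha}{1-\alpha}$ (which is the rearrangement of $\alpha>\frac{p-2}{p+1}$, since $\alpha>\frac{p-2}{p+1}\iff \alpha(p+1)>p-2\iff p(1-\alpha)<\alpha+2$, hmm this is $p<\frac{\alpha+2}{1-\alpha}$) --- in any case, matching the threshold to $\alpha_{\mathrm{opt}}$ and not a cheaper one. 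The remaining steps are routine: verify $V$ is continuous and in $\mathrm{W}^{1,1}$ up to the patching interfaces (matching normals guarantees continuity across the interface with the constant far-field vectors; $\mathrm{W}^{1,1}$ needs the curvature-type singularity to be integrable, i.e.\ $\alpha>0$ and $\int_0^1 r^{\alpha-1}\,dr<\infty$), check $|V|\le1$ pointwise (immediate for unit normals; for the angular interpolation one uses that a convex combination of unit vectors on a short arc, suitably normalized, stays in the closed unit ball --- or one normalizes explicitly), and then apply Proposition \ref{Prop: For counterExample} with $\Gamma=\emptyset$ to conclude $H:=\operatorname{div}V\in\mathds{H}^1\big(E,(-1,1)^2\big)\cap\L^p\big((-1,1)^2\big)$ for all admissible $p$. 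Finally, the asserted boundary regularity of $E$ itself --- $\C^{1,\alpha}$ but not $\C^{1,\beta}$ near $0$ for $\beta>\alpha$ --- is elementary from the explicit formula $x_2=\operatorname{sgn}(x_1)|x_1|^{1+\alpha}$ and its derivative $(1+\alpha)|x_1|^\alpha$, which is $\alpha$-Hölder but no better at $0$.
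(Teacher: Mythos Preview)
Your framework is right --- extend $\nu_E$ to a unit vector field $V\in\mathrm{W}^{1,1}\cap\C$ and invoke Proposition~\ref{Prop: For counterExample} --- but the actual extension, which as you say is the whole point, is missing, and the attempts you sketch do not work. First, your direction sense is inverted: a cruder extension makes $\operatorname{div}V$ \emph{more} singular, hence gives $H\in\L^p$ for \emph{fewer} $p$, not more; so ``too good'' is backwards. Second, the constant-along-normals extension genuinely breaks down here: the curvature $\kappa(s)\sim s^{\alpha-1}$ blows up at $0$, the focal distance $1/\kappa$ shrinks to $0$, and on the $x_2$-axis one lands exactly at the focal locus, so your bound $|\operatorname{div}V|\lesssim|x|^{\alpha-1}$ is false (the singularity is controlled by the \emph{foot-point} parameter, essentially $|x_1|$, not by $|x|$), and no simple patching fixes this while keeping the sharp threshold.

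The paper's construction is both explicit and short, and you should adopt it. Split ${(-1,1)}^2$ by the two curves $x_2=\pm|x_1|^{1+\alpha}$ into $D_1\coleq\{|x_2|<|x_1|^{1+\alpha}\}$ and $D_2\coleq\{|x_2|>|x_1|^{1+\alpha}\}$; on $D_1$ extend $\nu_E$ \emph{constant along verticals} (so $V$ depends only on $x_1$), and on $D_2$ extend \emph{constant along horizontals} (so $V$ depends only on $x_2$). The odd symmetry of $\partial E$ makes this match continuously across the interfaces, so $V\in\C\cap\mathrm{W}^{1,1}$ with $|V|\equiv1$ and $\Gamma=\emptyset$. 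A direct computation gives $|\operatorname{div}V|\le c(\alpha)\,|x_1|^{\alpha-1}$ on $D_1$ and $|\operatorname{div}V|\le c(\alpha)\,|x_2|^{(\alpha-1)/(1+\alpha)}$ on $D_2$; integrating, the \emph{thinness} of each region (width $|x_1|^{1+\alpha}$ in $D_1$, width $|x_2|^{1/(1+\alpha)}$ in $D_2$) yields
\[
\int_{D_1}|H|^p\lesssim\int_0^1 t^{\,1+\alpha-(1-\alpha)p}\,\d t,\qquad
\int_{D_2}|H|^p\lesssim\int_0^1 t^{\,\frac{1}{1+\alpha}-\frac{1-\alpha}{1+\alpha}p}\,\d t,
\]
and both are finite precisely when $(1-\alpha)p<2+\alpha$, i.e.\ $\alpha>\frac{p-2}{p+1}$. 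That is the missing bookkeeping you were looking for; once you have this $V$, the rest of your outline (Proposition~\ref{Prop: For counterExample}, the elementary regularity of $\partial E$) goes through verbatim.
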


Given $\beta\in{(0,1]}$, $p\in{[2,\infty)}$ with
$\beta>\alpha_\mathrm{opt}(2,p)=\frac{p-2}{p+1}$, the theorem applied with any
choice of $\alpha$ in between $\alpha_\mathrm{opt}(2,p)$ and $\beta$ disproves
$\C^{1,\beta}$ regularity of $E$. Hence, in case $n=2<p$ the example confirms the
optimality of our exponent $\alpha_\mathrm{opt}(2,p)=\frac{p-2}{p+1}$ in Theorem \ref{Theo: Improvement reg theo} in the
sense claimed in the introduction.

\begin{proof}
We define 
\begin{align*}
    D_1&\coleq \left\{x\in{(-1,1)}^2: |x_2|<|x_1|^{1+\alpha}\right\}\,,\\
    D_2&\coleq \left\{x\in{(-1,1)}^2 : |x_2|>|x_1|^{1+\alpha}\right\}\,,\\
    D_1^+&\coleq D_1\cap \left({(0,1)}\times \R\right)\,,\qquad D_1^-\coleq D_1\cap \left((-1,0)\times\R\right)\,,\\
    D_2^+&\coleq D_2\cap \left(\R\times {(0,1)}\right)\,,\qquad D_2^-\coleq D_2\cap \left(\R\times(-1,0)\right)\,.
\end{align*}

\begin{figure}[ht]
\captionsetup[subfigure]{labelformat=empty}
  \centering
  \subcaptionbox{The set $E$ and the subdomains $D_i^\pm$}{
  \begin{tikzpicture}[scale=2.5]

  \filldraw[ white, fill=orange!25] (-1,-1) -- (1,1)--(1,-1)--(-1,-1);
  
\filldraw[thick,white, fill=white] (0,0)--(0,1)--(1,1);

\filldraw[thick, color=orange!70, fill=white, variable=\x,domain=0:1,samples=20] 
  
  plot ({\x},
 {\x^(1.5)})
 node[right] {};

\filldraw[thick, color=orange!70, fill=orange!25, variable=\x,domain=-1:0,samples=20] 
  
  plot ({\x},
 {-(-\x)^(1.5)})
 node[right] {};
  
\draw[dashed, gray] (-1,-1) -- (-1,1)--(1,1)--(1,-1)--(-1,-1);
\filldraw[ color=orange!25] (-0.99,-0.99) -- (0,0)--(0.99,-0.99)--(-0.99,-0.99); 
\node[orange] at (0.8,0.5) {\Large$E$};
 
 \draw[dashed, color=red!70, variable=\x,domain=0:1,samples=20] 
  
  plot ({\x},
 {-\x^(1.5)})
 node[right] {};

\draw[dashed, color=red!70, variable=\x,domain=-1:0,samples=20] 
  
  plot ({\x},
 {(-\x)^(1.5)})
 node[right] {};
 
\node[gray] at (0,0.5) {\Large$D^+_2$};

\node[gray] at (-0.6,0) {\Large$D^-_1$};
 
\node[gray] at (0,-0.5) {\Large$D^-_2$};
 
\node[gray] at (0.6,0) {\Large$D^+_1$};

\end{tikzpicture}}
\qquad 
\subcaptionbox{Lines of constancy of the extension $V$}{  
\begin{tikzpicture}[scale=2.5]

  \filldraw[ white, fill=orange!10] (-1,-1) -- (1,1)--(1,-1)--(-1,-1);
  
\filldraw[thick,white, fill=white] (0,0)--(0,1)--(1,1);

\filldraw[dashed, color=orange!70, fill=white, variable=\x,domain=0:1,samples=20] 
  
  plot ({\x},
 {\x^(1.5)})
 node[right] {};

\filldraw[dashed, color=orange!70, fill=orange!10, variable=\x,domain=-1:0,samples=20] 
  
  plot ({\x},
 {-(-\x)^(1.5)})
 node[right] {};
  
\draw[dashed, gray] (-1,-1) -- (-1,1)--(1,1)--(1,-1)--(-1,-1);
\filldraw[ color=orange!10] (-0.99,-0.99) -- (0,0)--(0.99,-0.99)--(-0.99,-0.99);

 \draw[dashed, color=red!70, variable=\x,domain=0:1,samples=20] 
  
  plot ({\x},
 {-\x^(1.5)})
 node[right] {};

\draw[dashed, color=red!70, variable=\x,domain=-1:0,samples=20] 
  
  plot ({\x},
 {(-\x)^(1.5)})
 node[right] {};

\draw[dashed, color=blue] (-0.925,-0.9)--(0.925,-0.9);
 
\draw[dashed, color=blue] (-0.925,0.9)--(0.925,0.9);
  
\draw[dashed, color=blue] (-0.85,-0.8)--(0.85,-0.8);
   
\draw[dashed, color=blue] (-0.85,0.8)--(0.85,0.8);

\draw[dashed, color=blue] (-0.775,0.7)--(0.775,0.7);
 
\draw[dashed, color=blue] (-0.775,-0.7)--(0.775,-0.7); 

\draw[dashed, color=blue] (-0.7,0.6)--(0.7,0.6);

\draw[dashed, color=blue] (-0.7,-0.6)--(0.7,-0.6);

\draw[dashed, color=blue] (-0.625,0.5)--(0.625,0.5);

\draw[dashed, color=blue] (-0.625,-0.5)--(0.625,-0.5);

\draw[dashed, color=blue] (-0.535,0.4)--(0.535,0.4);

\draw[dashed, color=blue] (-0.535,-0.4)--(0.535,-0.4);

\draw[dashed, color=blue] (-0.44,0.3)--(0.44,0.3);

\draw[dashed, color=blue] (-0.44,-0.3)--(0.44,-0.3);

\draw[dashed, color=blue] (-0.325,0.2)--(0.325,0.2);

\draw[dashed, color=blue] (-0.325,-0.2)--(0.325,-0.2);

\draw[dashed, color=blue] (-0.22,0.1)--(0.22,0.1);

\draw[dashed, color=blue] (-0.22,-0.1)--(0.22,-0.1);

\draw[dashed, color=green] (-0.925,-0.875)--(-0.925,0.875);
 
\draw[dashed, color=green] (0.925,-0.875)--(0.925,0.875);
  
\draw[dashed, color=green] (-0.85,-0.775)--(-0.85,0.775);
   
\draw[dashed, color=green] (0.85,-0.775)--(0.85,0.775);

\draw[dashed, color=green] (0.775,-0.675)--(0.775,0.675);
 
\draw[dashed, color=green] (-0.775,-0.675)--(-0.775,0.675); 

\draw[dashed, color=green] (-0.7,-0.575)--(-0.7,0.575);

\draw[dashed, color=green] (0.7,-0.575)--(0.7,0.575);

\draw[dashed, color=green] (-0.625,-0.475)--(-0.625,0.475);

\draw[dashed, color=green] (0.625,-0.475)--(0.625,0.475);

\draw[dashed, color=green] (-0.535,-0.375)--(-0.535,0.375);

\draw[dashed, color=green] (0.535,-0.4)--(0.535,0.375);

\draw[dashed, color=green] (-0.44,-0.275)--(-0.44,0.275);

\draw[dashed, color=green] (0.44,-0.275)--(0.44,0.275);

\draw[dashed, color=green] (-0.325,-0.175)--(-0.325,0.175);

\draw[dashed, color=green] (0.325,-0.175)--(0.325,0.175);

\draw[dashed, color=green] (-0.22,-0.1)--(-0.22,0.075);

\draw[dashed, color=green] (0.22,-0.1)--(0.22,0.075);

\end{tikzpicture}}
  \caption{Counterexample to Massari-type regularity for $\alpha>\frac{p-2}{p+1}$}
\end{figure}
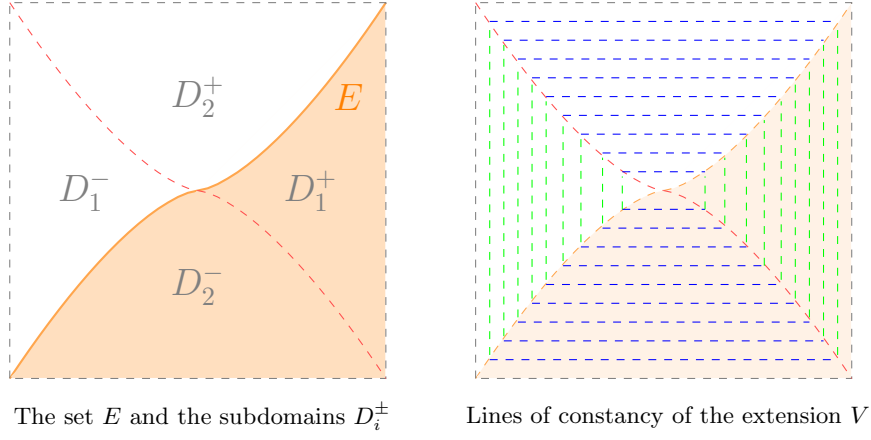

Furthermore, we compute the outward unit normal of $E$ as
\begin{align*}
    \nu_E(x) = \frac{(-(1+\alpha)|x_1|^\alpha,1)}{\sqrt{1+(1+\alpha)^2 |x_1|^{2\alpha}}}= \frac{(-(1+\alpha)|x_2|^\frac{\alpha}{1+\alpha},1)}{\sqrt{1+(1+\alpha)^2 |x_2|^{2\frac{\alpha}{1+\alpha}}}}
\end{align*}
for $(\mathrm{sgn}(x_2)|x_2|^\frac{1}{1+\alpha},x_2)=x=(x_1,\mathrm{sgn}(x_1)|x_1|^{1+\alpha})\in\partial E\cap (-1,1)^2$.
Since $(-x_1,x_2)\in\partial E\cap (-1,1)^2$ for all $x\in(\partial D_i\setminus \partial E)\cap (-1,1)^2$, $i\in\{1,2\}$, we can extend $\nu_E$ continuously to the vector field
\begin{align*}
    V(x) \coleq
    \begin{cases}
    \frac{(-(1+\alpha)|x_1|^\alpha,1)}{\sqrt{1+(1+\alpha)^2 |x_1|^{2\alpha}}} &\text{if }x\in D_1  \\
    \frac{(-(1+\alpha)|x_2|^\frac{\alpha}{1+\alpha},1)}{\sqrt{1+(1+\alpha)^2 |x_2|^{2\frac{\alpha}{1+\alpha}}}} &\text{if }x\in D_2\\
    \nu_E(x) &\text{if }x\in\partial E\\
    \nu_E(-x_1,x_2)&\text{if }x\in \partial D_1\setminus\partial E = \partial D_2\setminus \partial E.
    \end{cases}
\end{align*}
We show that $\mathrm{D}V$ exists weakly in $\L^1((-1,1)^2;\R^{2\times 2})$. Clearly, $V$ is $\C^1$ on $ D_1\cup D_2$. For $x\in D_1^+$, we calculate
\begin{align*}
  \partial_2 V(x) = 0, \qquad \partial_1 V(x) = -(1+\alpha)\alpha x_1^{\alpha-1}\frac{\left(1,(1+\alpha)x_1^\alpha\right)}{\left(1+(1+\alpha)^2x_1^{2\alpha}\right)^\frac{3}{2}}
\end{align*}
and for $x\in D_1^-$, we have $\mathrm{D}V(x)= -\mathrm{D}V(-x)$. Hence, we find $|\mathrm{D}V(x)|\le \mathrm{c}(\alpha)|x_1|^{\alpha-1}$ for $x\in D_1$ and $\mathrm{D}V\in\L^1(D_1;\R^{2\times2})$. For $x\in D_2^+$, we have
\begin{align*}
  \partial_1 V(x)=0,\qquad \partial_2 V(x)= -\alpha x_2^{\frac{\alpha}{1+\alpha}-1}\frac{\left(1, (1+\alpha)x_2^{\frac{\alpha}{1+\alpha}}\right)}{\left(1+(1+\alpha)^2x_2^{2\frac{\alpha}{\alpha+1}}\right)^\frac{3}{2}}
\end{align*}
and for $x\in D_2^-$, it holds $\mathrm{D}V(x)=-\mathrm{D}V(-x)$ by symmetry. Again, we infer $|\mathrm{D}V(x)|\le \mathrm{c}(\alpha)|x_2|^{\frac{\alpha}{1+\alpha}-1}$ for $x\in D_2$ and $\mathrm{D}V\in\L^1(D_2;\R^{2\times2})$. Since $V$ is continuous on all of ${(-1,1)}^2$ and even $\mathrm{C}^1$ away from the two regular $\mathrm{C}^1$ curves $\partial E\cap{(-1,1)}^2$ and $(\partial D_i\setminus\partial E)\cap{(-1,1)}^2$, we conclude that $V$ is in $\mathrm{W}^{1,1}((-1,1)^2;\R^2)\cap C((-1,1)^2;\R^2)$. Thus, Proposition \ref{Prop: For counterExample} with $\Gamma=\emptyset$ implies that $H\coleq\mathrm{div}\,V$ is a variational mean curvature of $E$, and we have
\begin{align*}
    \left|H(x)\right|\le \mathrm{c}(\alpha)\left(|x_1|^{\alpha-1}\mathds{1}_{D_1}(x)+|x_2|^{2\frac{\alpha}{1+\alpha}-1}\mathds{1}_{D_2}(x)\right)\,.
\end{align*}
for a.e. $x\in{(-1,1)}^2$. By symmetry and Fubini's theorem, we compute
\begin{align*}
    &\int_{(-1,1)^2}|H|^p\dx \\
    \le &4\mathrm{c}(\alpha)\int_0^1 \int_0^{x_1^{1+\alpha}}x_1^{(\alpha-1)p}\dx_2\dx_1+4\mathrm{c}(\alpha)\int_0^1 \int_0^{x_2^\frac{1}{1+\alpha}} x_2^{\left(2\frac{\alpha}{1+\alpha}-1\right)p}\dx_1\dx_2 \\
    = &4\mathrm{c}(\alpha)\int_0^1 t^{1+\alpha-(1-\alpha)p}+ t^{\frac{1}{1+\alpha}-\left(\frac{1-\alpha}{1+\alpha}\right)p}\d t.
\end{align*}
One checks that the last integral is finite if and only if $\alpha>\frac{p-2}{p+1}$. Hence, the variational mean curvature $H=\mathrm{div}\,V$ is in $\mathrm{L}^p({(-1,1)}^2)$ whenever $\alpha>\frac{p-2}{p+1}$.
\end{proof}

\subsection{A slightly less explicit example in higher dimensions}

Finally, we turn to a related example which confirms the optimality of
$\alpha_\mathrm{opt}(n,p)=\frac{p-n}{p+1}$ in arbitrary dimension $n\ge2$.
Since it is not clear to us if and how the construction of
Section \ref{subsec:2d-example}, in particular the extension of the unit normal,
generalizes to higher dimensions, we rely instead on the Barozzi construction of
the optimal variational mean curvature and estimate this curvature suitably
via Lemma \ref{lemma: Minim contain balls}.

\begin{theo}[Counterexample to Massari-type regularity for $n\ge2$, $\alpha>\frac{p-n}{p+1}$]
  Let $\alpha\in{(0,1)}$, and consider the convex set $E$ obtained as the union of
  \[
    \tilde{E}\coleq\{x\in\R^n: |\cx|^{1+\alpha}<x_n<1\}
  \]
  and the ball
  $B\coleq\B_{\sqrt{1+\frac{1}{(1+\alpha)^2}}}\big(\closure{0},1+\frac{1}{1+\alpha}\big)$
  \textup{(}which is chosen such that $\partial E$ is $\C^1$\textup{)}. Then,
  $\partial E=\partial^\ast\!E$ is $\C^{1,\alpha}$, but not $\C^{1,\beta}$ near $0$ for any
  $\beta>\alpha$, and, for every ball\/ $U\subseteq\R^n$ with $E\ssubset U$,
  there exists a variational mean curvature $H$ of\/ $E$ in $U$ such that\/
  $H\in\L^p(U)$ for all $p\in{[1,\infty)}$ with $\alpha>\frac{p-n}{p+1}$.
\end{theo}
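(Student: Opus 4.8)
The plan is to realize $H$ as the combination, furnished by Remark \ref{Rem: regularity theorem of Massari}, of the Barozzi optimal variational mean curvatures (Construction \ref{Construction H_E}) of the bounded finite-volume sets $E$ and $U\setminus E$, and to control it from above by the minimizers-contain-balls Lemma \ref{lemma: Minim contain balls}. The geometric and regularity claims about $E$ are routine: near $0$ the boundary $\partial E$ is the graph of $g(\cx)\coleq|\cx|^{1+\alpha}$, whose gradient $\nabla g(\cx)=(1+\alpha)|\cx|^{\alpha-1}\cx$ is $\C^{0,\alpha}$ but — as one sees by restricting to a ray through $0$, along which $\nabla g$ changes like $|\cx|^\alpha$ — not $\C^{0,\beta}$ for any $\beta>\alpha$, so by Lemma \ref{Lemma: Hölder continuity tansmits from and to normal} the same holds for $\partial E$; away from $0$ the boundary consists of the $\C^\infty$ pieces $\{x_n=|\cx|^{1+\alpha}\}$ and $\partial B$, which meet along the circle $\{|\cx|=1,\,x_n=1\}$ with a common tangent plane — this is exactly what fixes center and radius of $B$ — so that $\partial E$ is globally $\C^1$ (hence $\partial E=\partial^\ast\!E$), and moreover $\C^{1,1}$ on $\partial E\setminus\B_\sigma(0)$ for every $\sigma>0$, the curvatures of the two pieces staying bounded up to the seam.

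Since $U$ is a bounded open set and $U$, $E\cap U=E$ have finite perimeter, Remark \ref{Rem: regularity theorem of Massari} reduces the assertion to proving $H_E\in\L^p(E)$ and $H_{U\setminus E}\in\L^p(U\setminus E)$ whenever $\alpha>\frac{p-n}{p+1}$; here $H_E$, $H_{U\setminus E}$ come from Construction \ref{Construction H_E} with $h\equiv1$, so that \eqref{Barozzi-formula} reads $H_E(x)=\inf\{\lambda\in\mathds{Q}_{>0}:x\in E_\lambda\}$ with $E_\lambda$ a minimizer of $\mathcal{F}_\lambda$ from \eqref{Minimization problem constant VMC}, and likewise for $H_{U\setminus E}$. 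Writing $r_E(x)\coleq\sup\{r>0:x\in\B_r(x')\subseteq E\text{ for some }x'\}$ and analogously $r_{U\setminus E}$, Lemma \ref{lemma: Minim contain balls} shows that $x\in\B_r(x')\subseteq E$ forces $x\in E_\lambda$ for every rational $\lambda>\frac nr$, hence
\[
  0\le H_E(x)\le\frac{n}{r_E(x)}\ \ (x\in E)\,,\qquad 0\le-H_{U\setminus E}(x)\le\frac{n}{r_{U\setminus E}(x)}\ \ (x\in U\setminus E)\,.
\]
Away from the origin these bounds are harmless: fix a small $\sigma_0>0$; on $\partial E\setminus\B_{\sigma_0}(0)$ the boundary is uniformly $\C^{1,1}$, so $E$ has a uniform interior ball condition there, while by convexity of $E$ and since $\partial U$ is a sphere, $r_{U\setminus E}$ is bounded below on all of $U\setminus E$ (each boundary point of a convex body admits arbitrarily large exterior balls, and near $0$, $U\setminus E$ contains balls tangent from below to the convex graph of $g$); using also $E\ssubset U$ and the trivial lower bounds deep inside the sets, one obtains $\mathrm{c}_1>0$ with $r_E\ge\mathrm{c}_1$ on $E\setminus\B_{\sigma_0}(0)$ and $r_{U\setminus E}\ge\mathrm{c}_1$ on $U\setminus E$; in particular $H_{U\setminus E}\in\L^\infty(U\setminus E)$.

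The crux is the behavior of $r_E$ near $0$, namely the estimate $r_E(x)\ge\mathrm{c}_0\,x_n^{(1-\alpha)/(1+\alpha)}$ for $x=(\cx,x_n)\in E$ with $|x|<\sigma_0$ (after shrinking $\sigma_0$, with $\mathrm{c}_0=\mathrm{c}_0(\alpha)>0$). For such $x$ one has $|\cx|^{1+\alpha}<x_n$, so $|\cx|<x_n^{1/(1+\alpha)}$ and the relevant boundary piece is the graph of $g$; one inscribes a ball $\B_R\bigl(p+R\,\nu_E(p)\bigr)$ tangent to this graph at a suitably placed base point $p=(\closure p,g(\closure p))$ with $|\closure p|$ of the order of $x_n^{1/(1+\alpha)}$ — taken directly below $x$ when $x$ is close to $\partial E$, and off to the side (in the direction of $\cx$, arbitrary if $\cx=0$) when $x$ is close to the axis. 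Since the principal curvatures of the graph at distance $\rho$ from the axis are of order $\rho^{\alpha-1}$, such a ball remains inside $\{y_n>|\closure y|^{1+\alpha}\}$ — hence, after shrinking $\sigma_0$, inside $E$ — as long as $R\le\mathrm{c}(\alpha)|\closure p|^{1-\alpha}$, which is comparable to $x_n^{(1-\alpha)/(1+\alpha)}$; and expanding $|x-p-R\,\nu_E(p)|^2=|x-p|^2-2R\,(x-p)\cdot\nu_E(p)+R^2$, using that the slope of $g$ at $\closure p$ is small (of order $x_n^{\alpha/(1+\alpha)}$) and that $p$ is placed so that $(x-p)\cdot\nu_E(p)\gtrsim x_n$, shows that $x$ already lies in the ball once $R\ge\mathrm{c}(\alpha)x_n^{(1-\alpha)/(1+\alpha)}$. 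Choosing $R$ in the (for $\sigma_0$ small, non-empty) range between these two thresholds gives the estimate. This balancing of the curvature-limited radius against the radius needed to capture the nearby point $x$ is the only delicate point of the whole proof.

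Finally, combining the last estimate with $E\cap\B_{\sigma_0}(0)\subseteq\{\cx\in\B^{n-1}_{\sigma_0}(\closure0),\ |\cx|^{1+\alpha}<x_n<\sigma_0\}$,
\[
  \int_E|H_E|^p\dx\le\mathrm{c}(n,p,\sigma_0,\mathrm{c}_1)+\Big(\tfrac{n}{\mathrm{c}_0}\Big)^{\!p}\int_{\B^{n-1}_{\sigma_0}(\closure0)}\int_{|\cx|^{1+\alpha}}^{\sigma_0}x_n^{-p\frac{1-\alpha}{1+\alpha}}\,\d x_n\,\d\cx\,;
\]
evaluating the inner integral and then the outer one over $\R^{n-1}$ shows the right-hand side is finite exactly when $p\frac{1-\alpha}{1+\alpha}<\frac{n+\alpha}{1+\alpha}$, i.e.\ $p(1-\alpha)<n+\alpha$, i.e.\ $\alpha>\frac{p-n}{p+1}$. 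Hence $H_E\in\L^p(E)$ for all such $p$, and with the previous step and Remark \ref{Rem: regularity theorem of Massari} this produces the desired $H\in\mathds{H}^1(E,U)\cap\L^p(U)$. The main obstacle is thus the near-origin estimate for $r_E$; the remaining ingredients are either a soft compactness / uniform-ball-condition argument away from the origin or the routine final integral.
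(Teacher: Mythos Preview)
Your overall strategy --- combine Barozzi's optimal curvatures $H_E$ and $H_{U\setminus E}$ via Remark \ref{Rem: regularity theorem of Massari}, bound them by $n/r_E$ and $n/r_{U\setminus E}$ through Lemma \ref{lemma: Minim contain balls}, and then check $\L^p$ integrability by Fubini --- is exactly what the paper does, and your treatment of $U\setminus E$ and of the final integral matches the paper's. The one place where the paper is considerably simpler is the near-origin estimate for $r_E$: instead of your tangent-ball construction at a ``suitably placed'' base point $p$ (which needs a case distinction according to whether $x$ is near $\partial E$ or near the axis, and only a sketch of the verification that $x$ lies in the ball and that the ball stays in $E$), the paper writes down a single explicit axially-symmetric ball depending only on $x_n$, namely $\B_{r_x}(z_x)$ with
\[
  z_x=\Big(\overline 0,\ x_n+\tfrac{1}{1+\alpha}x_n^{\frac{1-\alpha}{1+\alpha}}\Big),\qquad
  r_x=\sqrt{\tfrac{1}{(1+\alpha)^2}x_n^{2\frac{1-\alpha}{1+\alpha}}+x_n^{\frac{2}{1+\alpha}}}\,.
\]
This ball is tangent to the graph along the whole circle $\{|\overline y|=x_n^{1/(1+\alpha)},\,y_n=x_n\}$, and for any $x\in\tilde E$ with last coordinate $x_n$ one has $|x-z_x|^2=|\cx|^2+\frac{1}{(1+\alpha)^2}x_n^{2(1-\alpha)/(1+\alpha)}<r_x^2$ immediately; this gives $r_E(x)\ge r_x\ge\tfrac{1}{1+\alpha}x_n^{(1-\alpha)/(1+\alpha)}$ in one line, so the step you flag as ``the only delicate point'' disappears entirely. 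The paper also avoids your split into $\B_{\sigma_0}(0)$ and its complement by simply using the ball $B$ itself for $x\in B$ and the axial ball for $x\in\tilde E$. Your more local tangent-ball picture would also work and is the natural thing to try for a non-rotationally-symmetric $E$, but here the rotational symmetry buys a much shorter argument.
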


\begin{proof}
  The set $E$ has a bounded $\C^{1,\alpha}$-boundary and thus finite perimeter
  in $U$.

  \emph{Step 1. Estimation of the curvature on $U\setminus E$.}
  Since $E$ is convex and $U$ is a ball with $E\ssubset U$, there exists $\varepsilon>0$ such that for
  each $x\in U\setminus  \closure{E}$, there exists $w\in U$ with
  $x\in \B_\varepsilon(w) \subseteq U\setminus E$
  For all
  $\lambda>\frac{n}{\varepsilon}$ and the problem of type \eqref{Minimization
  problem constant VMC} with $E$ replaced by $U\setminus E$ there,
  Lemma \ref{lemma: Minim contain balls} gives that minimizers $(U\setminus
   E)_\lambda$ necessarily contain the balls $\B_\varepsilon(w)$. For
  the variational mean curvature $H_{U\setminus  E}$ from Construction
  \ref{Construction H_E} (applied with $h_{U\setminus  E}\equiv1$ on
  $U\setminus  E$), it follows
  $0\le H_{U\setminus  E}\le \frac{n}{\varepsilon}$ on $U\setminus E$.
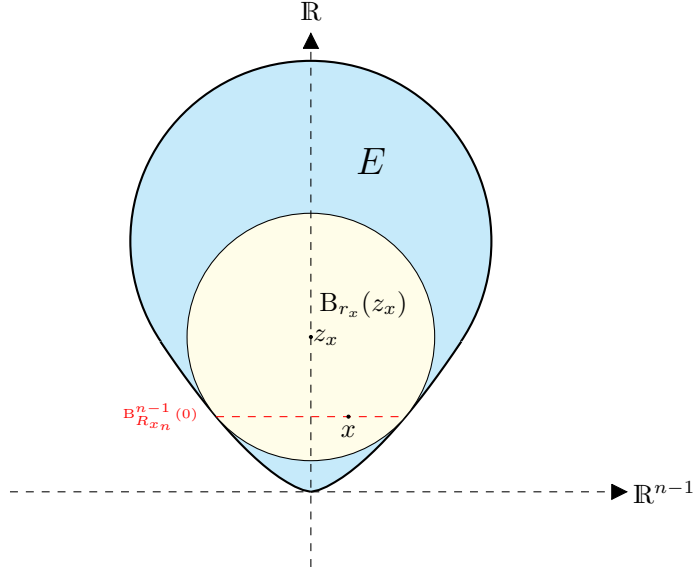
\begin{figure}[ht]
  \centering
  \begin{tikzpicture}[scale=2]
  
 \filldraw[thick, fill=cyan!20] (0,1.667) circle (1.2);

 \filldraw[color=cyan!20] (0,0)--(-1,1)--(1,1)--(0,0);

\filldraw[thick, fill=cyan!20, variable=\x,domain=0:1,samples=20] 
  
  plot ({\x},
 {\x^(1.5)});
 
 \filldraw[thick, fill=cyan!20, variable=\x,domain=-1:0,samples=20] 
  
  plot ({\x},
 {(-\x)^(1.5)});

  \filldraw[fill=yellow!10] (0,1.03) circle (0.823);

 \draw[dashed] (-2,0)--(2,0);

 \draw[dashed] (0,-0.5)--(0,3);

 \node at (0.1,1.03) {$z_x$};

 \node at (0.35,1.25) {$\B_{r_x}(z_x)$};

 \node at (0.25,0.4) {$x$};

\draw[dashed, red] (-0.63,0.5)--(0.63,0.5);

\node at (0,3.2) {$\R$};

\node at (2.35,0) {$\R^{n-1}$};

\filldraw (-0.05,2.95)--(0.05,2.95)--(0,3.05)--(-0.05,2.95);

\filldraw (2,0.05)--(2,-0.05)--(2.1,0)--(2,0.05);

\node[red] at (-1,0.5) {\tiny$\B^{n-1}_{R_{x_n}}(0)$};

 \filldraw (0.25,0.5) circle (0.01);

 \filldraw (0,1.03) circle (0.01);

\node at (0.4,2.2) {\Large $E$};
  
\end{tikzpicture}
  \caption{Counter-example to Massari-type regularity for $\alpha>\frac{p-n}{p+1}$}
\end{figure}

  \emph{Step 2. Estimation of the curvature on $E$.}
  Now, we estimate the variational mean curvature $H_E$
  from Construction \ref{Construction H_E} (with $h_{E}\equiv1$ on $E$) by
  applying Lemma \ref{lemma: Minim contain balls} for balls contained in $E$ in
  entirely the same way. Clearly, by using the ball $B$ we get
  $0\le H_E\le n\big(1+\frac{1}{(1+\alpha)^2}\big)^{-\frac{1}{2}}$ on
  $B$. For the main argument, now consider $x\in\tilde{E}$. Then,
  in view of the choice of $\tilde E$, one checks $x\in \B_{r_x}(z_x)\subseteq E$ for
\begin{align*}
    z_x&\coleq\left(\closure{0}, \frac{1}{1+\alpha}x_n^{\frac{1-\alpha}{1+\alpha}} +x_n\right)\,,\\
    r_x&\coleq \sqrt{\frac{1}{(1+\alpha)^2}x_n^{2\frac{1-\alpha}{1+\alpha}}+x_n^\frac{2}{1+\alpha}}<\sqrt{\frac{1}{(1+\alpha)^2}+1}\,,
\end{align*}
where the choices have been expressed in terms of the last component $x_n$ of $x$.
Thus, we infer
\begin{align*}
  0\le H_E(x) \le \frac{n}{r_x}\le \mathrm{c}(n,\alpha)x_n^{-\frac{1-\alpha}{1+\alpha}}
  \qquad\text{for }x\in\tilde{E}\,.
\end{align*}

\emph{Step 3. $\mathrm{L}^p$ integrability of the curvature.}
According to Remark \ref{Rem: regularity theorem of Massari},
by letting $H\coleq H_E\mathds{1}_E-H_{U\setminus  E}\mathds{1}_{U\setminus  E}$, we obtain a variational mean curvature $H$ of
$E$ in $U$, and in view of the constant bounds on $U\setminus E$
and $B$, it suffices to check the integrability of $H$ on $\tilde{E}$. We abbreviate $R_{x_n}\coleq x_n^\frac{1}{1+\alpha}$
for $x_n\ge 0$. With Fubini's theorem and $\tilde{E}_{x_n}\coleq\{\cx\in \R^{n-1}: (\cx,x_n)\in \tilde{E}\}=\B^{n-1}_{R_{x_n}}(0)$ for all $x_n\in (0,1)$, it follows
\begin{align*}
    \int_{\tilde{E}} |H|^p\dx = \int_0^1 \int_{\B^{n-1}_{R_{x_n}}(0)}|H(x)|^p\,\d\cx\dx_n\le \mathrm{c}(n,p,\alpha) \int_0^1 x_n^{\frac{n-1}{1+\alpha}-\frac{1-\alpha}{1+\alpha}p}\dx_n,
\end{align*}
and the last integral is finite if and only if
$\alpha>\frac{p-n}{p+1}$. Hence,
the variational mean curvature $H$ is in $\mathrm{L}^p(U)$ whenever
  $\alpha>\frac{p-n}{p+1}$.
\end{proof}

\begin{rem}
    In case $n=2$, the exact determination of the global variational mean curvature $H_E$ by using 
    \cite[Theorem 2.3]{leonardi2020minimizers} (compare also \cite[Theorem 3.32]{StredulinskyZiemer1997}) shows that the integrability found above is best possible. Indeed, since $E\ssubset U$ is convex, the $\L^p$ minimality from \cite[Theorem 3.2]{Barozzi1994}, Lemma \ref{Lemma: composed curvtaure} and the result in \cite[Theorem 4.2]{BarozziMassari2018} imply that $H_E\mathds{1}_E$ has the best possible integrability on $E$ even among local variational mean curvatures of $E$ in $U$. Therefore, the limit case $\mathrm{C}^{1,\alpha}$ regularity with $\alpha=\frac{p-n}{p+1}$ --- which indeed we believe does hold for $n<p<\infty$ --- at least cannot be generally ruled out by this type of example.
\end{rem}

\phantomsection\addcontentsline{toc}{section}{References}

\end{document}